\newtheorem{theorem}{Theorem}[section]
\newtheorem{cor}[theorem]{Corollary}
\newtheorem{lem}[theorem]{Lemma}
\newtheorem{pro}[theorem]{Proposition}
\numberwithin{equation}{section}
\newtheorem{remark}[theorem]{Remark}
\newtheorem{example}{Example}
\begin{document}
\title{\vspace{-1cm} \bf Unique continuation of Schr\"odinger-type equations for $\bar\partial$ \rm}
\author{Yifei Pan  \ \ and \ \  Yuan Zhang}
\date{}

\maketitle

\begin{abstract}
The purpose of this paper is to study the unique continuation property for  a   Schr\"odinger-type equation $ \bar\partial u = Vu$ on a domain in $\mathbb C^n$, where the solution $u$ may be a  scalar function, or a vector-valued function. While simple examples show that the unique continuation property  fails in general if  the potential $V\in L^{p}, p<2n$, we first prove that, in the case when  $u $ is a scalar function,    the unique continuation property holds when  $V\in L_{loc}^{2n}$ and is $\bar\partial$-closed.   For   vector-valued smooth solutions, we establish the unique continuation property  either when     $V\in L_{loc}^p $, $  p>2n$ for $n\ge 3$, or when $V\in L_{loc}^{2n}$ for $n =  2$. Finally,  we   discuss the unique continuation property for some special cases where   $V\notin L_{loc}^{2n}$, for instance, $V $ is a constant multiple of $ \frac{1}{|z|}$.

\end{abstract}

\renewcommand{\thefootnote}{\fnsymbol{footnote}}
\footnotetext{\hspace*{-7mm}
\begin{tabular}{@{}r@{}p{16.5cm}@{}}
& 2010 Mathematics Subject Classification. Primary 32W05; Secondary 35J10,  35A02.\\
& Key words and phrases. unique continuation, $\bar\partial$ operator, Schr\"odinger-type, 
 Moser-Trudinger inequality.
\end{tabular}}

\section{Introduction}

Let $\Omega$ be a   domain in $\mathbb C^n, n\ge 1$. Let $u: \Omega\rightarrow \mathbb C^N$  be a $H^1_{loc}(\Omega) $ solution to  the following Schr\"odinger-type equation for the $\bar\partial$ operator: 
\begin{equation}\label{eqn}
    \bar\partial u = Vu\ \ \text{on} \ \ \Omega  
\end{equation}  
in the sense of distributions.  Here the  potential $V$ is an $N\times N$ matrix of $(0, 1)$ forms with $L^p_{loc}(\Omega)$ coefficients for some $p\ge 1$,  and the space $H^k_{loc}(\Omega): = W^{k, 2}_{loc}(\Omega)$, where $W^{k, p}_{loc}(\Omega) $ is the standard Sobolev space of functions whose weak derivatives up to order $k$ exist and belong to $L^p_{loc}(\Omega)$. The  equation \eqref{eqn} arises naturally from various questions  in CR and almost complex geometry and plays an important role, for instance, while studying the boundary regularity and  uniqueness of CR-mappings, as well as uniqueness of J-holomorphic curves.  See  \cite{BL90, GR} et al. 
 
 In this paper, we  study the (strong) unique continuation property of \eqref{eqn}. Namely, we investigate whether a solution to \eqref{eqn} vanishing to infinite order  in the $L^2$ sense at one point vanishes identically. Here a    function $u\in L^2_{loc}(\Omega)$ is said to vanish to infinite order (or, be flat) in the $L^2$ sense at a point $z_0\in \Omega$  if for all $m\ge 1$,
\begin{equation*}\label{flat}
    \lim_{r\rightarrow 0} r^{-m}\int_{|z-z_0|<r}|u(z)|^2 dv_z =0,
\end{equation*} 
where $dv_z$  is the Lebesgue  measure   in $\mathbb C^n$ with respect to the dummy  variable $z$. Otherwise, $u$ is said to vanish to a finite order in the $L^2$ sense at $z_0$.

 As demonstrated by Example \ref{pf}, the unique continuation property    fails in general for \eqref{eqn} with  $L_{loc}^{p}$ potentials, $p<2n =dim_{\mathbb R} \Omega, $ the real dimension of the source domain $\Omega$. On the other hand, it should be reminded that for the real Laplacian $\Delta$, the unique continuation property   has been thoroughly understood.  In particular,   the works of Chanillo-Sawyer \cite{CS90} and Wolff  \cite{Wo90, Wo94} have shown that for a domain $\Omega\subset \mathbb R^d$ and  $V\in L_{loc}^{d}(\Omega)$, the unique continuation property for $H^2_{loc}(\Omega)$ solutions  of the differential inequality   \begin{equation} \label{lap}
    |\Delta u|\leq V|\nabla u|\ \ \text{on}\ \ \Omega 
\end{equation}  
  holds when $d=2, 3, 4$, and  fails in general when $d\ge 5.$

  Surprisingly, due to  the   more rigid structure of    $\bar\partial$,  the   unique continuation property of \eqref{eqn} holds for all $\bar\partial$-closed  $ L_{loc}^{2n}(\Omega)$ potentials,  $n (=dim_{\mathbb C}\Omega) \ge 1$,  as stated in the following theorem in the case when their solutions are scalar functions. This dimension independence of  the unique continuation property for $\bar\partial$ stands in stark contrast to the aforementioned result  for  $\Delta$.   In view of Example \ref{pf}, it is also optimal. 

\begin{theorem}\label{main2}
    Let $\Omega$ be a  domain in $\mathbb C^n$. Suppose $u: \Omega\rightarrow \mathbb C$ with $u\in H^1_{loc}(\Omega)$,  and satisfies $ \bar\partial u = V u$ on $\Omega$ in the sense of distributions
 for some  $\bar\partial$-closed  $(0,1)$ form $V\in L_{loc}^{2n}(\Omega)$.  If $u$ vanishes to infinite order in the $L^2$ sense at some $z_0\in \Omega$, then $u$ vanishes identically.
\end{theorem}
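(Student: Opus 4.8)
The plan is to reduce \eqref{eqn} to the purely holomorphic case by a gauge (similarity) transformation, using crucially that $V$ is $\bar\partial$-closed. Fix a ball $B=B(z_0,\rho)\Subset\Omega$. Since $V$ is a $\bar\partial$-closed $(0,1)$ form with $L^{2n}(B)$ coefficients and $B$ is a ball, one can solve $\bar\partial\phi=V$ on $B$ using the integral solution operator for $\bar\partial$ on the ball (e.g.\ the relevant component of the Bochner--Martinelli--Koppelman homotopy formula); this operator gains one derivative in every $L^p$, $1<p<\infty$, its kernel being, modulo bounded terms, the order-one Riesz potential on $\mathbb R^{2n}$, whose gradient is a Calder\'on--Zygmund operator. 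Hence $\phi\in W^{1,2n}_{loc}(B)$, which is precisely the borderline Sobolev exponent $W^{1,d}$ with $d=2n=\dim_{\mathbb R}B$. The Moser--Trudinger inequality then gives
\begin{equation*}
 e^{t|\phi|}\in L^1_{loc}(B)\quad\text{for every }t>0,
\end{equation*}
so in particular $e^{\pm\phi}\in L^s_{loc}(B)$ for all $s<\infty$. Set $w:=e^{-\phi}u$. Since $e^{-\phi}\in L^s_{loc}$ and $u\in L^2_{loc}$, we have $w\in L^1_{loc}(B)$, and a computation with the product and chain rules — justified at this regularity by mollifying $\phi$ and $u$ and passing to the limit, using the uniform $W^{1,2n}$ bounds on the mollifications to control $e^{-\phi_\varepsilon}$ in $L^s_{loc}$ — yields $\bar\partial w=e^{-\phi}(\bar\partial u-(\bar\partial\phi)u)=e^{-\phi}(Vu-Vu)=0$ on $B$ in the sense of distributions. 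Therefore $w$ agrees a.e.\ with a holomorphic function on $B$.

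It remains to check that $w$ still vanishes to infinite order at $z_0$. From $u\in H^1_{loc}(\Omega)$ and Sobolev embedding on $\mathbb R^{2n}$ we get $u\in L^{2q_0}_{loc}(\Omega)$ for some fixed $q_0>1$ (any $q_0<\infty$ when $n=1$, and $q_0=\tfrac{n}{n-1}$ when $n\ge 2$, since $\tfrac{2n}{n-1}>2$). Write $B_r:=\{|z-z_0|<r\}$. Pick $a\in(1,q_0)$ with conjugate exponent $a'$. For $r<\rho$, H\"older's inequality gives
\begin{equation*}
 \int_{B_r}|w|^2\,dv\;\le\;\bigl\| e^{-2\Real\phi}\bigr\|_{L^{a'}(B_r)}\,\|u\|_{L^{2a}(B_r)}^2 ,
\end{equation*}
where $\bigl\| e^{-2\Real\phi}\bigr\|_{L^{a'}(B_r)}$ is bounded uniformly in $r<\rho$ by the exponential integrability above. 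Interpolating $L^{2a}$ between $L^2$ and $L^{2q_0}$ gives $\|u\|_{L^{2a}(B_r)}\le\|u\|_{L^2(B_r)}^{1-\theta}\|u\|_{L^{2q_0}(B_r)}^{\theta}$ for some $\theta\in(0,1)$; since $\|u\|_{L^{2q_0}(B_r)}$ is bounded and $\int_{B_r}|u|^2=o(r^m)$ for every $m$, we conclude $\|u\|_{L^{2a}(B_r)}^2=o(r^M)$ for every $M$, hence $\int_{B_r}|w|^2=o(r^M)$ for every $M$. Thus the holomorphic function $w$ is flat at $z_0$; expanding $w$ in a Taylor series centered at $z_0$, flatness forces all coefficients to vanish, so $w\equiv 0$ on $B$, and therefore $u=e^{\phi}w\equiv 0$ a.e.\ on $B$.

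To pass from $B$ to all of $\Omega$, let $S$ be the set of points of $\Omega$ in a neighborhood of which $u$ vanishes; $S$ is open and contains $z_0$. If $z_1\in\Omega\cap\partial S$, run the same gauge construction on a ball $B'\Subset\Omega$ centered at $z_1$: the resulting holomorphic $w'$ vanishes on the nonempty open set $B'\cap S$, hence on all of $B'$, so $u\equiv 0$ on $B'$ and $z_1\in S$, a contradiction. Hence $S$ is also closed in $\Omega$; since $\Omega$ is connected, $S=\Omega$, i.e.\ $u\equiv 0$.

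The main obstacle is the borderline integrability exponent $p=2n$: one does not obtain a bounded gauge $\phi$, only the Moser--Trudinger exponential integrability of $e^{t\phi}$, so transferring the infinite-order vanishing from $u$ to $w=e^{-\phi}u$ is not automatic. The resolution is precisely the combination used above — exponential integrability of $e^{-2\Real\phi}$ in every $L^{a'}_{loc}$ together with the a priori gain $u\in L^{2q_0}_{loc}$ coming from $u\in H^1_{loc}$, fed into H\"older's inequality and $L^p$-interpolation — which is just strong enough at this critical integrability. A secondary technical point, the product/chain rule identity defining $w$ at the available Sobolev regularity, is handled by mollification as indicated.
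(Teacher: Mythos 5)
Your proof is correct and follows essentially the same route as the paper: solve $\bar\partial\phi=V$ locally with $W^{1,2n}$ regularity, use the Moser--Trudinger inequality to get $e^{\pm\phi}\in L^{s}_{loc}$ for all $s<\infty$, justify the gauge identity by mollification so that $w=e^{-\phi}u$ is holomorphic, transfer the $L^2$-flatness from $u$ to $w$ by H\"older, and conclude from the finite-order vanishing of nontrivial holomorphic functions. The only cosmetic differences are that the paper obtains the gauge via H\"ormander's $L^2$ theory plus elliptic regularity of $\bar\partial$, transfers flatness using a local sup bound on the holomorphic factor (which needs only $e^{-\phi}\in L^2_{loc}$) rather than your Sobolev-embedding/interpolation step, and globalizes by the weak unique continuation property of \cite{PZ} instead of your open-closed argument --- all equivalent in substance.
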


The $n=1$ case of the theorem was established in \cite{PZ} (for arbitrary target dimension $N$, see also Theorem \ref{pz}); the real-valued solution case has been proved lately in \cite{CPZ} concerning the gradient operator $\nabla$, given the equivalence of $\bar\partial$ to $\nabla$ on  such solutions. See also Corollary \ref{nod} for a similar result for smooth functions satisfying the inequality $ |\bar\partial u| \le V|u|$ for $V\in L^{2n}_{loc}$.  The proof of Theorem \ref{main2} 
relies on a   classification result of weak solutions to \eqref{eqn} below.

\begin{theorem}\label{main}
Let $\Omega$ be a pseudoconvex domain in $\mathbb C^n$. Given a $\bar\partial$-closed $(0,1)$ form $V\in L_{loc}^{2n}(\Omega)$, there exists a   function  $ f \in W^{1,2n}_{loc}(\Omega)$ such that every  $ H^1_{loc}(\Omega)$ solution $u: \Omega\rightarrow \mathbb C$   to $  \bar\partial u = Vu$ on $\Omega$ in the sense of distributions is of the form $e^{{f}}h $, for some holomorphic function $h$ on $\Omega$. In particular, $u\in W^{1,q}_{loc}(\Omega)$   for all $1\le q<2n$.
\end{theorem}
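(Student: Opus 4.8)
The strategy is to solve a $\bar\partial$-problem to produce the weight $f$, then show every solution $u$ is forced into the form $e^f h$. First I would construct $f$: since $\Omega$ is pseudoconvex and $V \in L^{2n}_{loc}(\Omega)$ is $\bar\partial$-closed, Hörmander's $L^2$-theory for $\bar\partial$ (or its $L^p$ refinements) gives a solution $f \in W^{1,2n}_{loc}(\Omega)$ to $\bar\partial f = V$ on $\Omega$. One needs the local regularity $f \in W^{1,2n}_{loc}$, which follows from interior elliptic-type estimates for $\bar\partial$ (equivalently, the Calderón–Zygmund estimates for the Cauchy–Riemann system / Newtonian-type potentials), using that the right-hand side $V$ is in $L^{2n}_{loc}$; by Sobolev embedding in real dimension $2n$ this already places $f$ in $\mathrm{VMO}_{loc}$ and in every $L^q_{loc}$, $q < \infty$. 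Then set $h := e^{-f} u$.

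Next I would verify that $h$ is holomorphic. Formally, $\bar\partial h = e^{-f}(\bar\partial u - (\bar\partial f) u) = e^{-f}(Vu - Vu) = 0$. The issue is that this computation must be justified for the low-regularity objects at hand: $u \in H^1_{loc}$ and $f \in W^{1,2n}_{loc}$, so one must check the product rule $\bar\partial(e^{-f}u) = e^{-f}\bar\partial u - e^{-f}(\bar\partial f)u$ holds in the sense of distributions. Since $e^{-f} \in W^{1,q}_{loc}$ for all $q < \infty$ (because $f$ is, and $e^{-f}$ is locally bounded as $f$ is locally bounded after the embedding — or at least $e^{-f} \in \bigcap_q L^q_{loc}$ with distributional derivative $-e^{-f}\bar\partial f \in L^{q}_{loc}$ for $q$ slightly below $2n$), the product $e^{-f}u$ and the Leibniz formula are controlled by a standard approximation: mollify $f$ to get $f_\varepsilon$, apply the product rule to the smooth factor $e^{-f_\varepsilon}$, and pass to the limit using $e^{-f_\varepsilon} \to e^{-f}$ in $L^q_{loc}$ and $\bar\partial f_\varepsilon \to \bar\partial f = V$ in $L^{2n}_{loc}$, with $u \in H^1_{loc} \subset L^{2^*}_{loc}$ for the needed Hölder pairing. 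This yields $\bar\partial h = 0$ as a distribution, hence $h$ is holomorphic by Weyl's lemma for $\bar\partial$ (elliptic regularity). Conversely $u = e^f h$ is genuinely a solution, and since $h$ is holomorphic (smooth) and $e^f \in W^{1,q}_{loc}$ for all $q < 2n$ with $e^f$ locally bounded, the product lies in $W^{1,q}_{loc}(\Omega)$ for all $1 \le q < 2n$, giving the last assertion.

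The uniqueness of the \emph{form} — that the \emph{same} $f$ works for \emph{every} solution $u$ — is immediate from the construction: $f$ depends only on $V$, and for any solution $u$ the quotient $e^{-f}u$ is holomorphic by the argument above; different solutions just correspond to different holomorphic $h$.

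The main obstacle I anticipate is the regularity bookkeeping in Step 2: establishing the distributional Leibniz rule for $\bar\partial(e^{-f}u)$ when neither factor is Lipschitz, and in particular confirming that $e^{-f}$ inherits enough Sobolev regularity from $f \in W^{1,2n}_{loc}$. The key enabling fact is the borderline Sobolev embedding $W^{1,2n} \hookrightarrow \mathrm{VMO} \cap \bigcap_{q<\infty} L^q$ in real dimension $2n$ (this is where the exponent $2n$ and presumably the Moser–Trudinger inequality advertised in the keywords enter), which makes $e^{\pm f}$ locally bounded and in all $L^q_{loc}$, so that products with $u \in H^1_{loc}$ and pairings against test forms all converge under mollification. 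Everything else — solvability of $\bar\partial f = V$ on pseudoconvex $\Omega$, interior $W^{1,2n}$ regularity, and Weyl's lemma for $\bar\partial$ — is standard once this point is secured.
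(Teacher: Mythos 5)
Your overall route is the same as the paper's: solve $\bar\partial f = V$ on the pseudoconvex domain via H\"ormander's $L^2$ theory, upgrade to $f\in W^{1,2n}_{loc}$ by interior ellipticity of $\bar\partial$ for $\bar\partial$-closed $(0,1)$ data, set $h=e^{-f}u$, justify the Leibniz rule by approximation, and conclude $h$ is holomorphic by Weyl's lemma. However, there is a genuine error at the pivotal regularity step. You assert that the borderline embedding places $f$ (hence $e^{\pm f}$) in $L^\infty_{loc}$: this is false. In real dimension $2n$ the space $W^{1,2n}$ is exactly the critical case where the embedding into $L^\infty$ (and into $C^0$) fails; the paper's Example 3.5 even exhibits $f\in W^{1,2n}(\mathbb C^n)$ unbounded on a dense set, and $W^{1,2n}\hookrightarrow \mathrm{VMO}\cap\bigcap_{q<\infty}L^q$ gives no pointwise bound. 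Your parenthetical fallback ($e^{-f}\in\bigcap_{q<\infty}L^q_{loc}$ with $\bar\partial e^{-f}=-e^{-f}\bar\partial f$ in $L^q_{loc}$, $q<2n$) is the correct statement, but it does not follow from $f\in\bigcap_q L^q_{loc}$ or from membership in VMO as you invoke them: exponential integrability of $f$ is a quantitative borderline fact (Moser--Trudinger, or a John--Nirenberg argument with small local BMO norm), and it is precisely what the paper proves in Lemma 2.2 with a bound depending only on $\|f\|_{W^{1,2n}}$.

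This quantitative bound is not a side remark; it is what makes your limit passage work. When you mollify and claim $e^{-f_\varepsilon}\to e^{-f}$ in $L^q_{loc}$, you need uniform (in $\varepsilon$) $L^p$ bounds on $e^{|f_\varepsilon|}$ together with an inequality such as $|e^{z_1}-e^{z_2}|\le (e^{|z_1|}+e^{|z_2|})|z_1-z_2|$ and H\"older; this is the content of the paper's Proposition 2.1, whose proof occupies the technical heart of the argument. As written, your proposal asserts the convergence rather than proving it, and props it on the incorrect local boundedness. The remaining pieces (choice of exponents in the H\"older pairings with $u\in H^1_{loc}\subset L^{2n/(n-1)}_{loc}$ and $V\in L^{2n}_{loc}$, holomorphy of $h$, and $u=e^f h\in W^{1,q}_{loc}$ for $q<2n$ because $h$ is smooth) are fine once the exponential integrability and the chain rule for $e^{-f}$ are genuinely established; note also that the final regularity claim should not lean on boundedness of $e^f$, which again need not hold, but only on smoothness of $h$ and on $e^f\in W^{1,q}_{loc}$.
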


In the second part of the paper, we study the case when solutions to \eqref{eqn}, or to the following general inequality, are vector-valued (i.e.,  the target dimension $N\ge 1$): 
\begin{equation}\label{eqn2}
     |\bar\partial u| \le V|u|\ \ \text{a.e. on}\ \ \Omega.
\end{equation}
Here the potential $V$ is a nonnegative  scalar function in $L_{loc}^p(\Omega)$ for some $p\ge 1$. With the help of a complex polar coordinate formula in Lemma \ref{cp}, we convert the unique continuation problem on a source domain of dimension $n$ to that on the complex plane, where \cite{PZ} can  readily take into effect. As a consequence of this, we  prove in Section \ref{s4} that for smooth solutions of \eqref{eqn2}, the strong unique continuation property  holds for   $L_{loc}^{p}$ potentials, $p>2n$. Note that in  smooth category, a function vanishes to infinite order in the $L^2$ sense at a point if and only if it vanishes to infinite order in the usual jet sense at that point, that is, all  its derivatives vanish  at that point, see Lemma \ref{fn}.

\begin{theorem}\label{main3}
    Let $\Omega$ be a  domain in $\mathbb C^n$. Suppose $u: \Omega\rightarrow \mathbb C^N$ with $u\in C^\infty(\Omega)$,  and satisfies $ |\bar\partial u| \le V|u|$ a.e. on $\Omega$ for  $V \in L_{loc}^{p}(\Omega)$, $p>2n$.  If $u$ vanishes to infinite order   at some $z_0\in \Omega$, then $u$ vanishes identically.
\end{theorem}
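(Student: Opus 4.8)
The plan is to slice $\Omega$ by complex lines through the vanishing point so as to reduce the problem to the one-dimensional strong unique continuation theorem of \cite{PZ}, and then to propagate the vanishing by a connectedness argument. Set
\begin{equation*}
    E := \{z \in \Omega : u \text{ vanishes to infinite order at } z\}.
\end{equation*}
Since $u \in C^\infty(\Omega)$, Lemma \ref{fn} identifies $E$ with $\bigcap_{\alpha} \{z \in \Omega : \partial^\alpha u(z) = 0\}$, so $E$ is closed; it is nonempty since $z_0 \in E$. Because $\Omega$ is connected, the theorem follows once we show $E$ is open. Fix $z_1 \in E$ and a ball $B = B(z_1, R)$ with $\overline{B} \subset \Omega$ and $V \in L^p(B)$; the claim is that $u \equiv 0$ on $B$.

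For $\omega \in S^{2n-1}$ let $g_\omega(\zeta) := u(z_1 + \zeta \omega)$ for $\zeta$ in the disk $D_R := \{\zeta \in \mathbb{C} : |\zeta| < R\}$. Each coordinate map $\zeta \mapsto z_{1,j} + \zeta \omega_j$ is holomorphic in $\zeta$, so the chain rule gives $\partial_{\bar\zeta} g_\omega(\zeta) = \sum_{j=1}^n \bar\omega_j\, (\partial_{\bar z_j} u)(z_1 + \zeta\omega)$, and hence, by the Cauchy--Schwarz inequality and the hypothesis \eqref{eqn2},
\begin{equation*}
    |\partial_{\bar\zeta} g_\omega(\zeta)| \le |\bar\partial u(z_1 + \zeta\omega)| \le V_\omega(\zeta)\,|g_\omega(\zeta)| \quad\text{a.e. on } D_R,\qquad V_\omega(\zeta) := V(z_1 + \zeta\omega).
\end{equation*}
Moreover $g_\omega$ vanishes to infinite order at $0$, since all derivatives of $u$ vanish at $z_1$. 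Therefore, as soon as we know $V_\omega \in L^2(D_R)$, Theorem \ref{pz} applied on the (connected) disk $D_R$ forces $g_\omega \equiv 0$ on $D_R$.

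The main step, and the one responsible for the hypothesis $p > 2n$, is to check that $V_\omega \in L^2(D_R)$ for almost every $\omega$. By the complex polar coordinate formula of Lemma \ref{cp},
\begin{equation*}
    \int_B V(z)^p\, dv_z = c_n \int_{\mathbb{CP}^{n-1}} \left( \int_{D_R} V_\omega(\zeta)^p\, |\zeta|^{2n-2}\, dv_\zeta \right) d\mu(\omega)
\end{equation*}
for a Fubini--Study measure $\mu$ and a positive dimensional constant $c_n$; since $V \in L^p(B)$, the left side is finite, so $\int_{D_R} V_\omega^p\, |\zeta|^{2n-2}\, dv_\zeta < \infty$ for a.e. $\omega$. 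For each such $\omega$, Hölder's inequality with exponents $\tfrac p2$ and $\tfrac{p}{p-2}$ yields
\begin{equation*}
    \int_{D_R} V_\omega^2\, dv_\zeta \le \left( \int_{D_R} V_\omega^p\, |\zeta|^{2n-2}\, dv_\zeta \right)^{2/p} \left( \int_{D_R} |\zeta|^{-\frac{2(2n-2)}{p-2}}\, dv_\zeta \right)^{(p-2)/p},
\end{equation*}
and the last factor is finite precisely because $\tfrac{2(2n-2)}{p-2} < 2$, i.e.\ $p > 2n$. Hence $V_\omega \in L^2(D_R)$ for a.e. $\omega$, and so $u(z_1 + \zeta\omega) = 0$ whenever $|\zeta| < R$, for a.e. $\omega \in S^{2n-1}$.

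Finally, the set $\{z_1 + \zeta\omega : |\zeta| < R,\ \omega \in A\}$ is dense in $B$ for any full-measure $A \subset S^{2n-1}$, so continuity of $u$ forces $u \equiv 0$ on $B$. Thus $B \subseteq E$, which shows $E$ is open; being also closed, nonempty, and contained in the connected domain $\Omega$, we conclude $E = \Omega$, i.e.\ $u$ vanishes identically. I expect the only genuinely delicate point to be the weighted polar-coordinate and Hölder estimate, where the sharp assumption $p > 2n$ enters (the radial weight $|\zeta|^{-2(2n-2)/(p-2)}$ is locally integrable on $\mathbb{C}$ exactly when $p>2n$); once the problem has been reduced to the complex plane, the remaining steps are routine.
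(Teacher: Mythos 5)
Your proposal is correct and follows essentially the same route as the paper: slice along complex lines through the flat point, use the polar-coordinate formula of Lemma \ref{cp} together with the H\"older estimate against the weight $|\zeta|^{2n-2}$ (which is exactly where $p>2n$ enters, identically to the paper's computation with $\tilde V(w)=|w|^{(2n-2)/p}V(w\zeta)$), and then apply the one-dimensional strong unique continuation result of Theorem \ref{pz}. The only (harmless) deviations are cosmetic: you phrase Lemma \ref{cp} over $\mathbb{CP}^{n-1}$ rather than $S^{2n-1}$, and you globalize the vanishing from the ball by an open--closed argument on the set of flat points, where the paper instead invokes the weak unique continuation property of Theorem \ref{pz} part 1); both conclusions are valid.
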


Specifically, in the case  when $n=2$, we prove the unique continuation property of \eqref{eqn2} for $L_{loc}^{4}$ potentials, which, as indicated by Example \ref{pf}, is sharp. The key to  its proof in Section \ref{se6} incorporates   a weighted estimate of the Cauchy integral   established in \cite{PZ} and  a Carleman inequality Proposition \ref{hh3} for $\bar\partial$.

 \begin{theorem}\label{main6}
    Let $\Omega$ be a  domain in $\mathbb C^2$. Suppose $u: \Omega\rightarrow \mathbb C^N$ with $u\in C^\infty(\Omega)$,  and satisfies $ |\bar\partial u| \le V|u|$ a.e. on $\Omega$ for some    $V \in L_{loc}^{4}(\Omega)$.  If $u$ vanishes to infinite order   at some $z_0\in \Omega$, then $u$ vanishes identically.
\end{theorem}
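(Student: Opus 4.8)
The plan is to localize at $z_0$, slice $u$ along the complex lines through $z_0$ to reduce to one complex variable, and there apply a critical, mildly weighted one-dimensional unique continuation statement; the weighted Cauchy-integral estimate of \cite{PZ} and the Carleman inequality of Proposition \ref{hh3} enter precisely in that one-variable step. Normalize $z_0=0$ and fix $\rho>0$ with $\overline{B_\rho(0)}\subset\Omega$. For $w\in S^3\subset\mathbb C^2$ set $v_w(\zeta):=u(\zeta w)$ for $|\zeta|<\rho$. Then $v_w\in C^\infty$ and, by the chain rule, $\partial_{\bar\zeta}v_w(\zeta)=\sum_j(\partial_{\bar z_j}u)(\zeta w)\,\overline{w_j}$, so $|\bar\partial v_w(\zeta)|\le|\bar\partial u|(\zeta w)\le V(\zeta w)\,|v_w(\zeta)|$ for a.e.\ $w$; moreover, since $u$ is smooth and flat at $0$, all its derivatives vanish at $0$ (Lemma \ref{fn}), hence so do those of $v_w$, and Lemma \ref{fn} in one variable turns this back into $L^2$-flatness of $v_w$ at $0$. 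If we show $v_w\equiv 0$ on $\{|\zeta|<\rho\}$ for a.e.\ $w$, then $u=0$ a.e.\ on $B_\rho(0)$, hence $u\equiv 0$ there by continuity, and the usual connectedness argument then propagates it to all of $\Omega$ (the set of points near which $u$ vanishes is open, nonempty, and closed --- a limit of such points vanishes to infinite order, where the local statement reapplies --- so it equals the connected set $\Omega$).

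Next I would pin down the integrability class of the sliced potential, which is where the exponent $p=2n=4$ becomes delicate. By the complex polar coordinate formula (Lemma \ref{cp}), $\int_{B_\rho}|V|^4\,dv_z$ equals a constant multiple of $\int_{S^3}\!\int_{|\zeta|<\rho}|V(\zeta w)|^4|\zeta|^{2}\,dv_\zeta\,d\sigma(w)$, so $A_w:=\int_{|\zeta|<\rho}|V(\zeta w)|^4|\zeta|^2\,dv_\zeta$ is finite for a.e.\ $w$ (with $\int_{S^3}A_w\,d\sigma<\infty$). By H\"older against $|\zeta|^{4\alpha-2}\in L^1_{loc}(\mathbb C)$, this gives for each fixed $\alpha>0$ the mildly weighted critical bound $\int_{|\zeta|<r}|V(\zeta w)|^2|\zeta|^{2\alpha}\,dv_\zeta\le C_\alpha\,A_w^{1/2}\,r^{2\alpha}\to 0$ as $r\to 0$, for a.e.\ $w$. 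Were $p>2n$, the same computation would instead yield $V(\cdot\,w)\in L^2_{loc}(\mathbb C)$ outright, and the one-variable result of \cite{PZ} (Theorem \ref{pz}) would finish at once --- this is Theorem \ref{main3}. At the critical exponent one only obtains the weighted critical integrability above, which is still sharp enough because the solid Cauchy transform of such a potential grows only sub-logarithmically near $0$ and so cannot annihilate flatness.

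The core is therefore the one-variable claim: if $v\in C^\infty$ near $0\in\mathbb C$ is flat at $0$ and $|\bar\partial v|\le W|v|$ with $\int_{|\zeta|<r}|W|^2|\zeta|^{2\alpha}\,dv_\zeta<\infty$ for some $\alpha>0$, then $v\equiv 0$ near $0$ --- and then $v\equiv 0$ on its whole connected domain, by Theorem \ref{pz} applied away from $0$, where $W\in L^2_{loc}$. I would prove this by a Carleman scheme built on Proposition \ref{hh3}: apply the Carleman estimate to $\chi v$ for a cutoff $\chi\equiv 1$ near $0$ supported in a small disk $D$; estimate $\|e^{\tau\phi}\chi\,\bar\partial v\|\le\|\,|\zeta|^{\alpha}W\|_{L^2(D)}\,\|e^{\tau\phi}|\zeta|^{-\alpha}\chi v\|$ by H\"older, the admissibility of the auxiliary factor $|\zeta|^{-\alpha}$ against the Carleman weight $e^{\tau\phi}$ being exactly what the weighted Cauchy estimate of \cite{PZ} provides; shrink $D$ so that the small factor $\|\,|\zeta|^{\alpha}W\|_{L^2(D)}$ absorbs the main term into the left-hand side; and let $\tau\to\infty$, the cutoff error $\|e^{\tau\phi}(\bar\partial\chi)v\|$ being negligible because $|\zeta|$ is bounded below on $\operatorname{supp}\bar\partial\chi$ while $v$ is flat at $0$. (In the scalar case one could shortcut via $v=e^{g}h$ with $\bar\partial g=W$ and $h$ holomorphic; for vector-valued $v$ no such factorization is available, and the Carleman route is what makes it go through. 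If Proposition \ref{hh3} is stated directly in $\mathbb C^2$, one runs the same absorption there, after Lemma \ref{cp}, with no essential change.)

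Combining the three ingredients gives $v_w\equiv 0$ on $\{|\zeta|<\rho\}$ for a.e.\ $w$, hence $u\equiv 0$ on $B_\rho(0)$ and then on $\Omega$. I expect the genuine obstacle to be the critical one-variable estimate of the third paragraph: at $p=2$ the Cauchy--Pompeiu representation followed by H\"older no longer closes (the kernel $|\zeta-\eta|^{-1}$ just fails to lie in $L^{p'}_{loc}$), and a bare power weight $e^{\tau\phi}=|\zeta|^{-\tau}$ cannot give a $\tau$-uniform Carleman estimate for $\bar\partial$ because holomorphic functions lie in its kernel; reconciling these --- choosing $\phi$ in Proposition \ref{hh3} finely enough, and using the weighted Cauchy estimate to tame the ``holomorphic tail'' of the Cauchy kernel while still absorbing a critical weighted-$L^2$ potential --- is the heart of the matter.
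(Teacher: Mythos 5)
Your reduction to one complex variable (Lemma \ref{cp} for the sliced potential, Lemma \ref{fn} to transfer flatness, weak unique continuation to propagate) is exactly the paper's first step. The problem is the one-variable lemma you make the core of the argument. You replace the structural information carried by the slicing --- namely that the sliced inequality has the form $|\bar\partial v|\le |w|^{-1/2}\tilde V|v|$ with $\tilde V\in L^4$ --- by the weaker hypothesis $\int_{|\zeta|<r}|W|^2|\zeta|^{2\alpha}\,dv\le C_\alpha A^{1/2}r^{2\alpha}$ for some $\alpha>0$. That hypothesis is satisfied by $W=C/|z|$ for \emph{every} constant $C>0$ (indeed $\int_{|\zeta|<r}C^2|\zeta|^{2\alpha-2}dv\sim \pi C^2r^{2\alpha}/\alpha$), and for such potentials the claimed conclusion is false for vector-valued solutions: Example \ref{ex2} (the Pan--Wolff construction) gives a nontrivial smooth $u_0:\mathbb C\to\mathbb C^2$, flat at $0$, with $|\bar\partial u_0|\le \frac{C^\sharp}{2|z|}|u_0|$. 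So no choice of Carleman weight can establish your third-paragraph claim as stated (for $N\ge 2$, which is the case the theorem must cover); the smallness you hope to gain by shrinking the disk is illusory, because $\||\zeta|^\alpha W\|_{L^2(D_r)}\to 0$ also holds for $C/|z|$ with $C$ large. The concrete breakdown in your absorption step is the H\"older splitting $\|e^{\tau\phi}\chi Wv\|_{L^2}\le\||\zeta|^\alpha W\|_{L^2(D)}\,\|e^{\tau\phi}|\zeta|^{-\alpha}\chi v\|$: an $L^2$ norm does not factor into a product of two $L^2$ norms, so the second factor must be taken in $L^\infty$ (or some $L^q$, $q>2$), and a $\tau$-uniform Carleman bound of $\bar\partial$ from $L^2$ into such a stronger norm with an extra $|\zeta|^{-\alpha}$ is neither what Proposition \ref{hh3} provides (it is $L^2\to L^2$ with exactly one power of $|z|$ lost and absolute constant $16$) nor what the weighted estimate of \cite{PZ} provides (that is an $L^2_{V^{-1}}\to L^2_{V}$ bound for the Riesz potential with an $L^2$ weight $V$); if such a uniform bound existed, it would contradict Example \ref{ex2}.

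The paper's proof of the one-variable step (Theorem \ref{main7}) keeps the factorization $|z|^{-1/2}\tilde V$ and treats the two factors by different mechanisms: the Cauchy representation of $u_k/z^m$ together with \eqref{whl}, applied with the weight $\tilde V^2\in L^2(\mathbb C)$ whose norm is made small by shrinking $r$ (and adding a harmless positive tail), controls the $\tilde V$-part; the remaining borderline factor $|z|^{-1}$ in the squared estimate is then converted back into $\bar\partial$ by Proposition \ref{hh3} with $\lambda=m-\tfrac12$, whose fixed constant $16$ is beaten by the arranged smallness ($\tfrac1{32}\cdot 16=\tfrac12<1$), after which the term is absorbed, the cutoff terms vanish by flatness as $k\to\infty$, and $m\to\infty$ forces $u=0$ near $0$. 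It is precisely this division of labor --- smallness only for the $L^4$ factor, an absolute spectral-gap constant for the $|z|^{-1/2}$ factor --- that your weakened hypothesis discards, and without it the critical case cannot close. (Two minor points: Proposition \ref{hh3} is a one-variable statement, so the parenthetical "run the same absorption directly in $\mathbb C^2$" is not available as stated; and the sub-logarithmic growth of the Cauchy transform argument you mention is the scalar ($N=1$) mechanism of Theorem \ref{333}, which has no vector-valued analogue.)
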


   Due to Theorem \ref{main2} and Example \ref{pf}, a natural question arises about  whether the strong unique continuation property holds for \eqref{eqn2} with $L_{loc}^{2n}$ potentials in the vector-valued solutions case for any complex source  dimension $n$.     At this point we are only able to establish   Theorem \ref{main6} for  $n=2$ (and in \cite{PZ} for $n=1$). It remains unclear whether this property continues to be true when $n\ge 3$, in particular, in view of Wolff's  intricate counter-examples  to \eqref{lap} in higher dimensional cases (with the real source dimension $d\ge 5$). See Remark \ref{re} for  unsolved  questions along this line in detail. However, it is noteworthy that   the weak unique continuation property holds even for   $  L^2_{loc} $ potentials, as shown in \cite{PZ}. Namely, any solution to \eqref{eqn2}   vanishing on an open subset must vanish identically.

Finally, despite the general failure of the unique continuation property for \eqref{eqn2} with $ L^p_{loc}$ potentials, $ p<2n$, we explore in Section \ref{se5} and Section \ref{se7} a special case where $V\notin L^{2n}_{loc}$, yet the unique continuation   property may still be  anticipated. More precisely, $V$ here takes the form of   a constant multiple of $\frac{1}{|z|}$. Interestingly,  the cases of $N=1$ and $N\ge 2$  under this context are rather distinct: the unique continuation property  holds true  for all positive constant multiple $C$ when $N=1$, while when   $N\ge 2$, this   property fails in general if $C$ is  large, see Example \ref{ex2}.  

\begin{theorem}\label{main4}
    Let $\Omega$ be a  domain in $\mathbb C^n$ and $0\in \Omega$. Let $u: \Omega\rightarrow \mathbb C^N$ with $u\in C^\infty(\Omega)$,  and  satisfy $ |\bar\partial u| \le \frac{C}{|z|}|u|$ a.e. on $\Omega$. Assume $u$  vanishes to infinite order  at $0\in \Omega$.\\
    1). If $N=1$, then $u $ vanishes identically.  \\
    2). If $N\ge 2$ and $C< \frac{1}{4 }$, then $u$ vanishes identically.
\end{theorem}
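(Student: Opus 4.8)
The plan is to reduce both assertions to the case $n=1$ by the complex polar coordinate formula, and then to handle the two plane problems by different devices: an explicit $e^{f}$-factorization through the Cauchy transform for $N=1$, and the Carleman inequality for $\bar\partial$ (Proposition~\ref{hh3}) for $N\ge 2$. For the reduction, fix $r_0>0$ with $\overline{B(0,r_0)}\subset\Omega$ and, for $\omega\in S^{2n-1}$, set $u_\omega(\zeta):=u(\zeta\omega)$ on the disc $D_{r_0}\subset\mathbb C$. Each $u_\omega$ is smooth, vanishes to infinite order at $0$ (all derivatives of $u$ vanish there, so all derivatives of $u_\omega$ do too, cf.\ Lemma~\ref{fn}), and, since $\partial_{\bar\zeta}u_\omega(\zeta)=\sum_j\bar\omega_j(\partial_{\bar z_j}u)(\zeta\omega)$ gives $|\partial_{\bar\zeta}u_\omega(\zeta)|\le|\bar\partial u(\zeta\omega)|$ by Cauchy--Schwarz, Fubini in the polar coordinates of Lemma~\ref{cp} shows that for a.e.\ $\omega$ one has $|\partial_{\bar\zeta}u_\omega|\le\frac{C}{|\zeta|}|u_\omega|$ a.e.\ on $D_{r_0}$. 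If I can show $u_\omega\equiv 0$ near $0$ for a.e.\ $\omega$, then $u\equiv 0$ on a ball about $0$; as $\frac{C}{|z|}$ is bounded (hence in $L^p_{loc}$ for all $p$) on $\Omega\setminus\{0\}$, Theorem~\ref{main3} (or Theorem~\ref{main6} when $n=2$) and connectedness of $\Omega$ then give $u\equiv 0$ on $\Omega$. So the task becomes: for $v\in C^\infty(D_{r_0})^N$ flat at $0$ with $|\partial_{\bar\zeta}v|\le\frac{C}{|\zeta|}|v|$ a.e., conclude $v\equiv 0$ near $0$ --- under ``$N=1$'', resp.\ ``$N\ge 2$ and $C<\tfrac14$''.

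For $N=1$ I would write $\partial_{\bar\zeta}v=Wv$ a.e.\ with $W:=\partial_{\bar\zeta}v/v$ on $\{v\neq0\}$ and $W:=0$ otherwise, so $|W|\le\frac{C}{|\zeta|}$, pick a cutoff $\chi$ that is $1$ on $D_{r_0/2}$ and supported in $D_{r_0}$, and set $f:=T(\chi W)$ for the Cauchy transform $T$. Using the weighted Cauchy integral estimate of \cite{PZ} (or directly estimating the Riesz potential of $\frac{C}{|\zeta|}$), $f$ is continuous on $D_{r_0/2}\setminus\{0\}$ with $|f(\zeta)|\le C'\log\tfrac{1}{|\zeta|}+C''$ near $0$ and $\partial_{\bar\zeta}f=W$ on $D_{r_0/2}$. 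Then $h:=v e^{-f}$ has $\partial_{\bar\zeta}h=e^{-f}(\partial_{\bar\zeta}v-Wv)=0$, so $h$ is holomorphic on $D_{r_0/2}\setminus\{0\}$, and $|h(\zeta)|\le C'''|v(\zeta)|\,|\zeta|^{-C'}\to 0$ as $\zeta\to0$ by the flatness of $v$; thus $h$ extends holomorphically across $0$, and the same bound forces $|h(\zeta)|\le C_m|\zeta|^{m-C'}$ for every $m$, so $h$ vanishes to infinite order at $0$, hence $h\equiv 0$ and $v\equiv 0$ on $D_{r_0/2}$.

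For $N\ge 2$ the logarithmic substitution is unavailable and I would use Proposition~\ref{hh3}. Decomposing a test function into Fourier modes $e^{ik\arg\zeta}$ and running a one-dimensional Hardy inequality mode by mode gives, for every half-integer $t>0$ and all $\phi\in C_c^\infty(D_{r_0}\setminus\{0\})^N$,
\begin{equation*}
\int_{D_{r_0}}|\zeta|^{-2t-2}|\phi|^2\,dv\ \le\ \frac{4}{\min_{k\in\mathbb Z}(t-k)^2}\int_{D_{r_0}}|\zeta|^{-2t}|\partial_{\bar\zeta}\phi|^2\,dv\ =\ 16\int_{D_{r_0}}|\zeta|^{-2t}|\partial_{\bar\zeta}\phi|^2\,dv.
\end{equation*}
Apply this to $\phi=\chi\chi_\delta v$, with $\chi$ as before and $\chi_\delta$ vanishing on $D_\delta$, equal to $1$ off $D_{2\delta}$, $|\nabla\chi_\delta|\lesssim\delta^{-1}$. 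Expanding $\partial_{\bar\zeta}(\chi\chi_\delta v)$ and using $|\partial_{\bar\zeta}v|\le\frac{C}{|\zeta|}|v|$, the principal term is at most $16(1+\varepsilon)C^2\int|\zeta|^{-2t-2}(\chi\chi_\delta)^2|v|^2$, which is absorbed into the left side exactly because $16C^2<1$; the $\partial_{\bar\zeta}\chi$-error sits in $\{r_0/2<|\zeta|<r_0\}$ and is $\le M(r_0/2)^{-2t}$ with $M$ independent of $t$; and the $\partial_{\bar\zeta}\chi_\delta$-error is $\lesssim\delta^{-2t-2}\int_{D_{2\delta}}|v|^2\to0$ as $\delta\to0$ for fixed $t$, by flatness of $v$. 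Letting $\delta\to0$ yields $\int_{D_{r_0/2}}|\zeta|^{-2t-2}|v|^2\le M(r_0/2)^{-2t}$, so $\int_{D_{\delta_0}}|v|^2\le M\delta_0^2(2\delta_0/r_0)^{2t}\to0$ as $t\to\infty$ through half-integers, for any $\delta_0<r_0/2$; hence $v\equiv 0$ near $0$.

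The hard part is precisely this last case. Unlike classical Carleman weights $e^{t\psi}$, which gain an arbitrarily large factor $t$, the weight $|\zeta|^{-2t}$ matched to the potential $\frac{C}{|\zeta|}$ yields a Carleman constant $4/\min_k(t-k)^2$ that cannot be driven to zero --- its smallest value over admissible $t$ is $16$, attained at half-integers --- so the absorption step is only viable when $16C^2<1$, which is the source of the hypothesis $C<\tfrac14$; that this threshold is sharp for $N\ge2$ is Example~\ref{ex2}. The remaining issue, that Proposition~\ref{hh3} needs test functions vanishing near $0$ whereas $v$ does not, is handled routinely by the inner cutoff $\chi_\delta$, with the flatness of $v$ ensuring the corresponding error vanishes in the limit.
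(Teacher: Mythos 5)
Your proposal is correct and follows essentially the same route as the paper: the paper's proof of Theorem \ref{main4} is exactly the radial reduction via Lemma \ref{cp} to the one-variable problem, settled by citing Theorem \ref{33} (whose $n=1$ case, Theorem \ref{3333}, is proved by the same Cauchy-transform factorization with logarithmic bound that you carry out inline) for $N=1$, and Theorem \ref{mm1} (the same Carleman/cutoff absorption argument based on Proposition \ref{hh3}, with $16(1+\varepsilon)C^2<1$) for $N\ge 2$, $C<\tfrac14$. Your only deviations are cosmetic: you reprove the two one-dimensional ingredients rather than quoting them, and you close the global step with the strong UCP of Theorem \ref{main3} on $\Omega\setminus\{0\}$ instead of the weak UCP of Theorem \ref{pz}, both of which are fine.
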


We point out that in the case when either $N=1$ or $n=1$,   the smoothness assumption on $u$ above can be relaxed to $u\in H_{loc}^1(\Omega)$, as established in Theorem \ref{33} and Theorem \ref{mm1}. See also Theorem \ref{333} for the unique continue when the potentials include both powers of $\frac{1}{|z|}$ and Lebesgue integrable functions.  As an application, it allows us to  refine an earlier result in \cite{CPZ} in terms of $\nabla$, which states that  near any flat point of a smooth function $u$, either  $ \frac{|\nabla u|}{|u|}\notin L^{2n}$,  or $u$ vanishes identically there.  More precisely, denote by $u^{-1}(0)$ the zero set of a smooth  function $u$. We obtain in  Section \ref{se5}  the following blowing-up property   in terms of $\bar\partial$   near a flat point of $u$.

\begin{cor}\label{prn}
    Let $\Omega$ be a domain in $\mathbb C^n$. Suppose $u: \Omega \rightarrow \mathbb C$ with $u\in C^\infty(\Omega)$, and  vanishes to infinite order  at some  $z_0\in \Omega$. Then  for every neighborhood $U$ of $z_0$ in $\Omega$, either $ U\setminus u^{-1}(0) =\emptyset$, or 
        \begin{equation}\label{eq4}
                \int_{U\setminus u^{-1}(0) }\frac{|\bar\partial u |^{2n}}{|u |^{2n}}\ dv =\infty.
        \end{equation}
         \end{cor}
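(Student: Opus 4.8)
The plan is to derive Corollary \ref{prn} from Theorem \ref{main4}(1) by contradiction. Suppose $u$ is a smooth function vanishing to infinite order at $z_0$, and there is a neighborhood $U$ of $z_0$ with $U\setminus u^{-1}(0)\neq\emptyset$ yet $\int_{U\setminus u^{-1}(0)}\frac{|\bar\partial u|^{2n}}{|u|^{2n}}\,dv<\infty$. We want to contradict the unique continuation conclusion, so the goal is to produce, after a translation putting $z_0=0$, a smooth function on some ball which satisfies a differential inequality of the form $|\bar\partial w|\le \frac{C}{|z|}|w|$, vanishes to infinite order at $0$, but does not vanish identically. The natural candidate is $w=u$ itself (shrinking $U$ to a ball $B$ around $z_0$ on which $u\not\equiv 0$), with the potential $V=\frac{|\bar\partial u|}{|u|}$ on $B\setminus u^{-1}(0)$, extended by $0$ on the zero set; by hypothesis $V\in L^{2n}_{loc}(B)$. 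If we could invoke Theorem \ref{main2} (or its smooth analogue, the promised Corollary \ref{nod}) directly with this $L^{2n}_{loc}$ potential, we would be done immediately — but $V$ here need not be $\bar\partial$-closed, so that route is unavailable, and this is exactly the point of the corollary: it replaces the $\bar\partial$-closedness by the pointwise bound $V\lesssim \frac1{|z|}$, at the cost of an extra integrability input.

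So the real content is a \emph{Hardy-type} step: converting the finiteness of $\int_{B\setminus u^{-1}(0)}\frac{|\bar\partial u|^{2n}}{|u|^{2n}}\,dv$, together with the infinite-order vanishing of $u$ at $0$, into the pointwise (or near-pointwise) estimate $\frac{|\bar\partial u(z)|}{|u(z)|}\le \frac{C}{|z|}$ near $0$ — at least after possibly shrinking $B$ and possibly only in an averaged sense strong enough to feed into Theorem \ref{main4}. The mechanism is that an $L^{2n}$ function $V$ on a ball in $\mathbb{C}^n=\mathbb R^{2n}$ cannot concentrate: if $\int_{B_r}V^{2n}$ is finite then $r^{-2n}\int_{B_r}V^{2n}\to 0$ is false in general, but $\int_{B_r}V^{2n}\to 0$ as $r\to0$, and combined with the fact that a flat smooth $u$ forces $\frac{|\bar\partial u|}{|u|}$ to be comparable to $\frac{1}{|z|}$ at worst on the "good" annuli — here I would use Lemma \ref{fn} (flatness in $L^2$ equals flatness in jets for smooth functions) to control $u$ and $\bar\partial u$ by Taylor expansion and show that on the set where $|u|$ is not too small relative to its local sup, the ratio is $O(1/|z|)$; on the complementary set $V^{2n}$ contributes a vanishing amount of integral, which one absorbs. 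Concretely I expect to show that the finiteness of \eqref{eq4} forces $\limsup_{z\to 0}|z|\,\frac{|\bar\partial u(z)|}{|u(z)|}<\infty$ on $B\setminus u^{-1}(0)$, i.e. there is $C>0$ and a small ball $B'$ with $|\bar\partial u|\le \frac{C}{|z|}|u|$ a.e. on $B'$ — and then Theorem \ref{main4}(1) applies verbatim to give $u\equiv 0$ on $B'$, hence (by the already-established weak unique continuation, or by real-analytic-type propagation via Theorem \ref{main2} on the complement of the zero set) $u\equiv 0$ on $U$, contradicting $U\setminus u^{-1}(0)\neq\emptyset$.

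The main obstacle is precisely this passage from the integral bound \eqref{eq4} to a genuine pointwise bound $\frac{|\bar\partial u|}{|u|}\le \frac{C}{|z|}$: near the zero set $u^{-1}(0)$ the ratio can a priori be huge even though its $2n$-th power is integrable, and one must rule out that the "bad region" where $\frac{|\bar\partial u|}{|u|}\gg \frac1{|z|}$ is nonempty arbitrarily close to $0$. I anticipate handling this by a dichotomy argument on dyadic annuli $A_k=\{2^{-k-1}\le|z|\le 2^{-k}\}$: on each $A_k$ either $u$ has a zero (so that, using smoothness and the Taylor expansion of $u$ to high order guaranteed by flatness, $|\bar\partial u|\le C_k|u|$ can fail, but then the integral $\int_{A_k}\frac{|\bar\partial u|^{2n}}{|u|^{2n}}$ is bounded below by a quantity one sums to $+\infty$ over $k$ — contradicting \eqref{eq4}), or $u$ is zero-free on $A_k$ and a compactness/continuity argument gives $\frac{|\bar\partial u|}{|u|}\le \frac{C}{|z|}$ there with a uniform $C$. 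Making the "bounded below, sum diverges" estimate quantitative — essentially a local Hardy inequality $\int_{A_k}\frac{|\bar\partial u|^{2n}}{|u|^{2n}}\gtrsim \#\{\text{zeros in }A_k\}\cdot(\text{something }\gtrsim 2^{2kn}\cdot 2^{-2kn})=\Omega(1)$ — is the delicate part; alternatively, one may be able to bypass the pointwise bound entirely by running the Carleman-type argument behind Theorem \ref{main4}(1) directly with the integral hypothesis \eqref{eq4} in place of the pointwise one, but I would first attempt the cleaner reduction to the already-proved Theorem \ref{main4}.
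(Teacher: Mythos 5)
Your central reduction has a genuine gap. The plan hinges on upgrading the integral hypothesis $\int_{U\setminus u^{-1}(0)}\frac{|\bar\partial u|^{2n}}{|u|^{2n}}\,dv<\infty$ to a pointwise bound $|\bar\partial u|\le \frac{C}{|z|}|u|$ near $z_0$, so that Theorem \ref{main4}(1) applies; but no such implication can be extracted, and the dyadic dichotomy you sketch does not close it. On a zero-free annulus $A_k$ compactness only yields some constant $C_k$, with no reason whatsoever that it scales like $2^{k}$: the ratio $|\bar\partial u|/|u|$ can spike like $|z-a|^{-1/2}$ near interior points $a\in A_k$ where $|u|$ dips, which is harmless for the $L^{2n}$ integral yet destroys any bound of the form $C/|z|$. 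On the other branch, the claimed uniform lower bound $\int_{A_k}\frac{|\bar\partial u|^{2n}}{|u|^{2n}}\gtrsim 1$ when $u$ has a zero in $A_k$ is false: for holomorphic $u$ the integrand vanishes identically no matter how many zeros $A_k$ contains, and Example \ref{gr} of the paper even exhibits non-holomorphic solutions whose zeros accumulate at a point while the ratio stays in $L^{2n}$. So neither branch can be made quantitative, and the intermediate pointwise estimate you would feed into Theorem \ref{main4} is unobtainable; the ``run the Carleman argument directly with the integral hypothesis'' fallback is left entirely unspecified and is not how the scalar case is handled in the paper.

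You also dismiss the direct route too quickly: only Theorem \ref{main2} requires $\bar\partial$-closedness of the potential, whereas Corollary \ref{nod} (smooth scalar solutions, arbitrary $V\in L^{2n}_{loc}$) does not, and Corollary \ref{prn} is exactly of that type — indeed the paper proves the two corollaries by the same argument. The actual proof assumes the integral is finite, so that $|\bar\partial u|\le V|u|$ on $U$ with $V:=|\bar\partial u|/|u|\in L^{2n}_{loc}(U)$; for $n=1$ Theorem \ref{pz}(2) finishes at once, and for $n\ge 2$ one slices along complex lines through $z_0$ using the complex polar coordinate formula (Lemma \ref{cp}): for a.e.\ $\zeta\in S^{2n-1}$ the restriction $v(w)=u(w\zeta)$ is flat at $0$ (Lemma \ref{fn}) and satisfies $|\bar\partial v|\le |w|^{-\frac{n-1}{n}}\tilde V|v|$ with $\tilde V\in L^{2n}(D_r)$, so the one-dimensional hybrid-potential Theorem \ref{333} with $\beta=n$ gives $v\equiv 0$; hence $u=0$ on a ball, and weak unique continuation (Theorem \ref{pz}(1)) gives $u\equiv 0$ on $U$, i.e.\ $U\setminus u^{-1}(0)=\emptyset$. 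The pointwise $C/|z|$ structure of Theorem \ref{main4} never enters; what makes the argument work is that slicing converts an $L^{2n}$ potential in $\mathbb C^n$ into a one-variable potential of precisely the hybrid form $\tilde V/|w|^{(\beta-1)/\beta}$ that Theorem \ref{333} is designed to handle.
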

     \medskip
     
 \begin{remark}
       One can  compare   Corollary \ref{prn} with the following entertaining facts for compactly supported functions on real and complex Euclidean spaces. \\
       \noindent \textbf{1.} \cite[Theorem 2.7]{CPZ} For any $u: \mathbb R^d\rightarrow \mathbb C$ with $u \in C_c^\infty(\mathbb R^d), d\ge 2$,
$$ \int_{  \text{supp}\  u }\frac{|\nabla u |^{2}}{|u |^{2}}\ dv =\infty.  $$
  \noindent \textbf{2.} \cite[Theorem 1.3]{PZ}  For any $u: \mathbb C^n\rightarrow \mathbb C$ with $u\in C_c^\infty(\mathbb C^n)$, 
\begin{equation}\label{w2}
    \int_{  \text{supp}\  u }\frac{|\bar\partial u |^{2}}{|u |^{2}}\ dv =\infty. 
\end{equation}
The power 2 in \eqref{w2} is optimal, in view of an example $u_0\in  C_c^\infty(\mathbb C)$ in \cite{Ma02} by Mandache, which satisfies  for all $p<2$, 
  $$ \int_{  \text{supp}\  u_0 }\frac{|\bar\partial u_0 |^{p}}{|u_0 |^{p}}\ dv <\infty.$$ 
 \end{remark}
   \bigskip
   
 \noindent\textbf{Acknowledgments: }
  Part of this work by the first  author was conducted while he was on
  sabbatical leave visiting Huaqiao university in China in Spring 2024.  He thanks Jianfei Wang for  invitation, and 
  the host institution for   hospitality and excellent research
  environment.

\section{Moser-Trudinger inequality and applications}
Let $\Omega$ be a bounded domain in $\mathbb R^d$.  One of the technical aspects to prove our main theorems is a chain rule for  weak derivatives of  the exponential of $ W^{1, d}$ functions. In this section, we shall show

\begin{pro}\label{pde}
Let $\Omega$ be a  domain in $\mathbb R^d$ and  $f\in W_{loc}^{1, d}(\Omega)$. Then $e^f\in W_{loc}^{1, q}(\Omega)$ for all $1\le q<d$. Moreover, $\nabla e^f = e^f\nabla f$ in the sense of distributions. 
\end{pro}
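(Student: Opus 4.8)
The plan is to prove Proposition \ref{pde} by a standard approximation argument, using the Moser--Trudinger inequality as the essential ingredient that makes $e^f$ (and the products $e^f \nabla f$) locally integrable to the right exponents. Since the statement is local, I would fix a relatively compact ball $B \Subset \Omega$ and work on $B$ throughout, so that $f \in W^{1,d}(B)$.

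\medskip

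First I would record the integrability facts. By the Moser--Trudinger inequality, for $f \in W^{1,d}(B)$ one has $e^{\alpha |f|} \in L^1(B)$ for every $\alpha > 0$; in particular $e^f \in L^q(B)$ for all $q < \infty$ (indeed all $q$), and more importantly $e^{|f|} |\nabla f| \in L^q(B)$ for all $1 \le q < d$. The latter follows from H\"older's inequality: write $q < d$, pick $r$ with $q < r < d$, then $\|e^{|f|}|\nabla f|\|_{L^q} \le \|e^{|f|}\|_{L^{s}} \|\nabla f\|_{L^{r}}$ with $\frac1q = \frac1s + \frac1r$, and $\|e^{|f|}\|_{L^s} < \infty$ by Moser--Trudinger while $\|\nabla f\|_{L^r} \le \|\nabla f\|_{L^d}^{\theta}\cdot(\text{vol})^{\cdots} < \infty$ since $r < d$. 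This gives the candidate for the distributional gradient, $e^f \nabla f \in L^q(B)$, and also shows $e^f \in W^{1,q}(B)$ provided we verify the chain rule.

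\medskip

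Next I would establish the chain rule $\nabla e^f = e^f \nabla f$ in the sense of distributions by approximation. Choose $f_k \in C^\infty(\bar B)$ with $f_k \to f$ in $W^{1,d}(B)$ and (passing to a subsequence) $f_k \to f$ a.e.\ and $\nabla f_k \to \nabla f$ a.e., together with a dominating function: by a further subsequence we may assume $|f_k| \le g$ a.e.\ for some $g \in W^{1,d}(B)$ (a standard consequence of $L^d$ convergence, e.g.\ via $\sum \|f_{k+1}-f_k\|_{W^{1,d}} < \infty$), and similarly $|\nabla f_k| \le G \in L^d(B)$ a.e. For smooth $f_k$ the classical chain rule gives $\nabla e^{f_k} = e^{f_k}\nabla f_k$ pointwise, hence $\int_B e^{f_k}\nabla f_k \cdot \varphi = -\int_B e^{f_k}\nabla\varphi$ for all $\varphi \in C_c^\infty(B)$. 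Then I pass to the limit on both sides: on the right, $e^{f_k} \to e^f$ a.e.\ and $e^{f_k} \le e^g \in L^1(B)$, so dominated convergence applies; on the left, $e^{f_k}\nabla f_k \to e^f \nabla f$ a.e.\ and $|e^{f_k}\nabla f_k| \le e^g G$, which lies in $L^1(B)$ by the same H\"older argument as above (with $q=1$, since $g \in W^{1,d}$, $G \in L^d$). Hence $e^f \nabla f$ is the weak gradient of $e^f$, and since both $e^f$ and $e^f\nabla f$ lie in $L^q(B)$ for $1 \le q < d$, we conclude $e^f \in W^{1,q}_{loc}(\Omega)$.

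\medskip

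The main obstacle is the uniform domination needed to pass to the limit: neither $e^{f_k}$ nor $e^{f_k}|\nabla f_k|$ is controlled by a fixed $L^1$ function merely from $W^{1,d}$ convergence without care, so one must extract an almost-everywhere dominant $g \in W^{1,d}$ for the $f_k$ (and $G \in L^d$ for the gradients) along a rapidly converging subsequence, and then invoke Moser--Trudinger to know $e^g \in L^s$ for the relevant $s$. Once this domination is in place, the two limits are routine dominated-convergence arguments. A secondary point worth stating carefully is the borderline exponent: the argument genuinely breaks at $q = d$ because $\nabla f$ need only be in $L^d$, not better, which is why the conclusion is $q < d$ and not $q = d$; this is consistent with (and sharp in view of) the examples mentioned in the introduction.
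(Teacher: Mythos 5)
Your proposal is correct, and its skeleton (smooth approximation $f_k\to f$ in $W^{1,d}$, Moser--Trudinger to control $e^{|f|}$, H\"older to place $e^f\nabla f$ in $L^q$ for $q<d$, then passing to the limit in $\int e^{f_k}\partial_i\varphi=-\int e^{f_k}\partial_i f_k\,\varphi$) matches the paper's. The one genuine difference is the mechanism for the limit: the paper proves \emph{norm} convergence, namely $e^{f_k}\to e^f$ in every $L^p$ and $e^{f_k}\nabla f_k\to e^f\nabla f$ in $L^q$, using the elementary mean-value bound $|e^{z_1}-e^{z_2}|\le\left(e^{|z_1|}+e^{|z_2|}\right)|z_1-z_2|$ together with the uniform bound of Lemma \ref{MTC} (which depends only on $\|f\|_{W^{1,d}}$), whereas you extract a rapidly convergent subsequence, build dominating envelopes $g\in W^{1,d}$ for $|f_k|$ and $G\in L^d$ for $|\nabla f_k|$, and apply dominated convergence with dominant $e^gG\in L^1$. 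Your route works, and the point you correctly identified as critical is that the envelope $g=|f_{k_1}|+\sum_j|f_{k_{j+1}}-f_{k_j}|$ lies in $W^{1,d}$ (not merely $L^d$), since $\||h|\|_{W^{1,d}}\le\|h\|_{W^{1,d}}$ and the partial sums are Cauchy; only then does Moser--Trudinger make $e^g$ integrable. What each approach buys: the paper's argument is quantitative and avoids subsequences (it actually yields $e^{f_k}\to e^f$ in $L^p$, which is reused in spirit later in the paper), while yours is measure-theoretically more elementary but only gives the identity along a subsequence, which is all that is needed since the limit object is fixed. Two small points of hygiene: the parenthetical ``indeed all $q$'' should be ``all finite $q$'' ($e^f\notin L^\infty$ in general), and the integrability $e^{\alpha|f|}\in L^1$ for arbitrary $f\in W^{1,d}$ is not the bare Moser--Trudinger inequality (which requires $W^{1,d}_0$ and normalized gradient) but the consequence recorded in Lemma \ref{MTC}, obtained by extension and splitting; since that lemma precedes the proposition, citing it suffices.
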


The  $W^{1,d}$ space is the critical Sobolev space where the Sobolev embedding theorem fails, and instead is substituted by the classical \noindent\textbf{Moser-Trudinger inequality}.  Recall that the Moser-Trudinger inequality  states (see \cite{Mo}) that for   a bounded domain $\Omega\subset \mathbb R^d$ with Lipschitz boundary, there exists a positive constant  $C_{MT}$ depending only on $d$ such that
$$ \sup_{u\in W_0^{1, d}(\Omega),\ \  \|\nabla u\|_{L^d(\Omega)}\le 1 }\int_\Omega e^{\alpha_d |u|^{\frac{d}{d-1}}}dv \le C_{MT} |\Omega| .$$
Here  $\alpha_d: = dw_{d-1}^\frac{1}{d-1}$, with $w_{d-1}$  the surface area of the unit sphere in $\mathbb R^d$, and $ |\Omega|$  the volume of $\Omega$.   It turns out that the Moser-Trudinger inequality is exactly the key to prove Proposition \ref{pde}. Before proceeding to its proof, we first make use of the inequality to show that the exponential of $ W^{1, d}$ functions belongs to $L^p$ for all $p<\infty$. 

\begin{lem}\label{MTC}
Let $\Omega$ be a bounded Lipschitz domain in $\mathbb R^d$ and  $f\in W^{1, d}(\Omega)$. Then  for any $1\le p< \infty$, $e^{|f|}\in L^p(\Omega) $ with
\begin{equation}\label{MT}
    \left\|e^{|f|}\right\|^p_{L^p(\Omega)}  \le   2|\Omega|\left(  e^{\frac{C_\Omega p^{d}\|f\|_{W^{1,d}(\Omega)}^d}{\alpha^{d-1}_d}}+   C_{MT}\right),
\end{equation}  
for some constant $C_\Omega$ dependent only on $\Omega$. In particular, $e^f\in L^p(\Omega)$.
Equivalently, if  $\log |g|\in W^{1, d}(\Omega)$ for some function $g$ on $\Omega$, then $g, \frac{1}{g}\in L^p(\Omega)$ for all $1\le p< \infty$.
\end{lem}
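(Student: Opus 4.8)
Since $|e^{f}| = e^{\Real f}\le e^{|f|}$, it suffices to prove the displayed bound for $e^{|f|}$, from which ``$e^f\in L^p(\Omega)$'' is immediate; the concluding ``equivalently'' assertion then follows by applying the statement to $f=\pm\log|g|$ and noting $e^{|\log|g||}=\max\{|g|,|g|^{-1}\}$. The first step is to reduce to a function vanishing on the boundary, so that the quoted Moser--Trudinger inequality applies. Since $\Omega$ is a bounded Lipschitz domain, it is a $W^{1,d}$-extension domain: fixing a ball $B$ with $\Omega\Subset B$, a bounded extension operator $E\colon W^{1,d}(\Omega)\to W^{1,d}(\mathbb R^d)$, and a cutoff $\chi\in C_c^\infty(B)$ with $\chi\equiv 1$ on $\Omega$, set $g:=\chi\,(Ef)\in W_0^{1,d}(B)$. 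Then $g=f$ on $\Omega$ and $M:=\|\nabla g\|_{L^d(B)}\le C_\Omega\|f\|_{W^{1,d}(\Omega)}$, with $C_\Omega$ depending only on $\Omega$ (through $B$, $\chi$ and $\|E\|$). If $M=0$ then $g\equiv 0$, hence $f\equiv 0$ on $\Omega$ and \eqref{MT} is trivial; so assume $M>0$.

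The core is a splitting of $p|f|$ at the critical exponent via Young's inequality. Writing $p|g| = \bigl(\alpha_d^{(d-1)/d}|g|/M\bigr)\cdot\bigl(pM/\alpha_d^{(d-1)/d}\bigr)$ and applying Young's inequality with conjugate exponents $\tfrac{d}{d-1}$ and $d$, one obtains pointwise
$$ p|g|\ \le\ \alpha_d\Bigl(\frac{|g|}{M}\Bigr)^{\frac{d}{d-1}}+\frac{p^{d}M^{d}}{d\,\alpha_d^{d-1}}. $$
Exponentiating and integrating over $\Omega$ (where $g=f$), the constant term factors out, giving
$$ \int_\Omega e^{p|f|}\,dv\ \le\ e^{\frac{p^{d}M^{d}}{d\,\alpha_d^{d-1}}}\int_\Omega e^{\alpha_d(|g|/M)^{d/(d-1)}}\,dv. $$
To bound the remaining integral while keeping a factor proportional to $|\Omega|$, split $\Omega$ into $\{|g|\le M\}$, on which the integrand is at most $e^{\alpha_d}$ so the contribution is $\le |\Omega|e^{\alpha_d}$, and its complement, on which one enlarges the domain of integration to $B$ and applies the Moser--Trudinger inequality to $u=g/M\in W_0^{1,d}(B)$, which satisfies $\|\nabla u\|_{L^d(B)}=1$, yielding at most $C_{MT}|B|$. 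Combining these, inserting $M^{d}\le C_\Omega^{d}\|f\|_{W^{1,d}(\Omega)}^{d}$, and absorbing the fixed quantities $e^{\alpha_d}$, $|B|/|\Omega|$ and $C_{MT}$ into $C_\Omega$ and $C_{MT}$ (all depending only on $\Omega$), produces a bound of the form $2|\Omega|\bigl(e^{C_\Omega p^{d}\|f\|_{W^{1,d}(\Omega)}^{d}/\alpha_d^{d-1}}+C_{MT}\bigr)$, which is \eqref{MT}. Proposition \ref{pde}, once available, is not needed here; only the Moser--Trudinger inequality itself is.

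The only genuine difficulty is that $W^{1,d}$ is the borderline Sobolev space, so no $L^\infty$ or polynomial gain is available and exponential integrability must come entirely from Moser--Trudinger; hence the work is concentrated in (i) the reduction to a compactly supported function on a Lipschitz domain so the quoted inequality applies, and (ii) the bookkeeping in the Young step at the exponent $\tfrac{d}{d-1}$, which is exactly what makes the dependence on $p$ come out as $p^{d}$ and the dependence on $f$ come out through $\|f\|_{W^{1,d}(\Omega)}^{d}$. I expect (ii) --- tracking the constants so that \eqref{MT} holds in the stated explicit form, and in particular accounting cleanly for the volume factor --- to be the main technical point; (i) is standard.
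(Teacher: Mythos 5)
Your argument is sound and reaches the substance of the lemma, but it exploits Moser--Trudinger by a different mechanism than the paper. You split the exponent pointwise via Young's inequality, $p|g|\le \alpha_d(|g|/M)^{d/(d-1)}+p^dM^d/(d\alpha_d^{d-1})$, which produces a \emph{multiplicative} bound $\int_\Omega e^{p|f|}\,dv\le e^{p^dM^d/(d\alpha_d^{d-1})}\cdot(\text{MT term})$; the paper instead splits the \emph{domain} by the level set $\{|\tilde f|\le p^{d-1}\|\nabla\tilde f\|_{L^d}^d/\alpha_d^{d-1}\}$, on which $e^{p|\tilde f|}$ is bounded by the exponential constant, while on the complement $p|\tilde f|\le \alpha_d|\tilde f|^{d/(d-1)}/\|\nabla\tilde f\|_{L^d}^{d/(d-1)}$ and Moser--Trudinger applies directly; this yields the \emph{additive} form $|\tilde\Omega|\bigl(e^{\cdots}+C_{MT}\bigr)$. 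The paper also extends $f$ to a slightly larger Lipschitz domain $\tilde\Omega$ with $|\tilde\Omega|\le 2|\Omega|$, which is exactly where the prefactor $2|\Omega|$ in \eqref{MT} comes from, whereas your cutoff on a large ball $B$ loses that volume control. Both routes prove exponential integrability with an explicit dependence on $p$ and $\|f\|_{W^{1,d}(\Omega)}$, and either suffices for every later use of the lemma (Proposition \ref{pde}, Theorem \ref{main2} only need a bound depending on $\|f\|_{W^{1,d}}$, $\Omega$, $p$).

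One caveat: your final ``absorbing'' step does not literally produce \eqref{MT}. Your bound has the shape $e^{X'}\bigl(|\Omega|e^{\alpha_d}+C_{MT}|B|\bigr)$ with $X'=p^dM^d/(d\alpha_d^{d-1})$, i.e.\ $C(\Omega,d)\,|\Omega|\,e^{C_\Omega p^d\|f\|_{W^{1,d}(\Omega)}^d/\alpha_d^{d-1}}$ after estimating $M$. The fixed factors $e^{\alpha_d}$ and $|B|/|\Omega|$ cannot be pushed into the exponent with a constant depending only on $\Omega$: as $\|f\|_{W^{1,d}(\Omega)}\to 0$ the right-hand side of \eqref{MT} tends to $2|\Omega|(1+C_{MT})$, while your left-hand bound stays at least $|\Omega|e^{\alpha_d}+C_{MT}|B|$, which is in general larger. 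So either state the conclusion with a prefactor $C(\Omega,d)$ in place of $2$ (harmless for the applications), or switch to the paper's level-set splitting together with an extension domain satisfying $|\tilde\Omega|\le 2|\Omega|$ to get the stated constants. The rest (the reduction $e^{|f|}$ vs.\ $e^{f}$, the $M=0$ case, and the ``equivalently'' clause via $f=\log|g|$) is fine.
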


\begin{proof}
Extend $f$ to be a function $\tilde f$ on  a bounded Lipschitz  domain $\tilde \Omega$,   such that $  \Omega\subset \subset \tilde \Omega,  |\tilde \Omega|\le 2 |\Omega|, $ $\tilde f\in W^{1, d}_0(\tilde \Omega)$ with   \begin{equation}\label{ext}
     \|\nabla \tilde f\|^d_{  L^d(\tilde \Omega)}\le \| \tilde  f\|^d_{  W^{1, d}(\tilde \Omega)}\le C_\Omega\|  f\|^d_{  W^{1, d}(\Omega)},
\end{equation} 
with $C_\Omega$ dependent only on $\Omega$. See \cite[pp. 268]{Ev}. Then
\begin{equation*}
    \begin{split}
        \int_\Omega e^{p|f|}dv\le \int_{\tilde \Omega} e^{p|\tilde f|}dv
        &= \int_{x\in \tilde \Omega, |\tilde f(x)|\le \frac{p^{d-1}\|\nabla \tilde f\|_{L^d(\tilde \Omega)}^d}{\alpha^{d-1}_d}}e^{p|\tilde f|}dv + \int_{x\in \tilde \Omega, |\tilde f(x)|> \frac{p^{d-1}\|\nabla\tilde f\|_{L^d(\tilde \Omega)}^d}{\alpha^{d-1}_d}} e^{p|\tilde f|}dv.
        \end{split}
        \end{equation*}
Since $$ \left\{x\in \tilde \Omega: |\tilde f(x)|> \frac{p^{d-1}\|\nabla\tilde f\|_{L^d(\tilde \Omega)}^d}{\alpha^{d-1}_d}\right\}  =   \left\{x\in \tilde \Omega:  p|\tilde f(x)|\le \frac{\alpha_d |\tilde f(x)|^\frac{d}{d-1}} {\|\nabla \tilde f\|_{L^d(\tilde \Omega)}^\frac{d}{d-1}}\right\},$$
we further have 
        \begin{equation*}
    \begin{split}
      \int_\Omega e^{p|f|} dv &\le e^{\frac{p^{d}\|\nabla \tilde f\|_{L^d(\tilde \Omega)}^d}{\alpha^{d-1}_d}}|\tilde \Omega| + \int_{x\in \tilde \Omega, p|\tilde f(x)|\le \frac{\alpha_d |\tilde f(x)|^\frac{d}{d-1}}{\|\nabla \tilde f\|_{L^d(\tilde \Omega)}^\frac{d}{d-1}}} e^{p|\tilde f|} dv
         \le e^{\frac{p^{d}\|\nabla \tilde f\|_{L^d(\tilde \Omega)}^d}{\alpha^{d-1}_d}}|\tilde \Omega| + \int_{ \tilde \Omega} e^{\alpha_d |f_1|^{\frac{d}{d-1}}}dv,
    \end{split}
\end{equation*}
where $f_1: =  \frac{ \tilde f}{\|\nabla \tilde f\|_{L^d(\tilde \Omega)}}$. Note that $f_1 \in W_0^{1, d}(\tilde \Omega)$ and $\|\nabla f_1\|_{L^d(\tilde \Omega)}= 1$. Applying the Moser-Trudinger inequality to $f_1$ in the last inequality and making use of \eqref{ext}, we get
$$ \int_\Omega e^{p|f|} dv\le |\tilde \Omega|\left(  e^{\frac{p^{d}\|\nabla \tilde f\|_{L^d(\tilde \Omega)}^d}{\alpha^{d-1}_d}}+   C_{MT}\right) \le   2|\Omega|\left(  e^{\frac{C_\Omega p^{d}\|f\|_{W^{1,d}(\Omega)}^d}{\alpha^{d-1}_d}}+   C_{MT}\right). $$
\eqref{MT} is proved. Since $|e^f|\le e^{|f|}, $ we further have $e^f\in L^p(\Omega)$. That $g, \frac{1}{g}\in L^p(\Omega)$ follows immediately from the facts that $|g| = e^{f}$ and $\frac{1}{|g|} = e^{-f}$ with $f: =\log|g|\in W^{1,d}(\Omega)$. 
\end{proof}
\medskip

It is worthwhile to note  that the integrability assumption $f\in W^{1, d}(\Omega)$ in Lemma \ref{MTC} is optimal in view of the following example. Denote by $B_r$  the ball in $\mathbb R^d$ centered at $0$ with radius $r$. 
\begin{example}For each $k\in \mathbb N$, let
$$f = -\ln|x|^{2k}, x\in B_{\frac{1}{2}}\subset \mathbb R^d, d\ge 2.$$
A direct computation shows that for each $1\le p<\infty$, $f\in L^p(B_{\frac{1}{2}})$  and 
 $ \nabla f = -\frac{2kx}{|x|^2}$ on  $B_{\frac{1}{2}}\setminus\{0\}.$
By a result of Harvey-Polking in \cite{HP}, we have $ \nabla f = -\frac{2kx}{|x|^2}$ on $B_{\frac{1}{2}}$ in the sense of distributions. Consequently,    $\nabla f\in L^q(B_{\frac{1}{2}})$ for all $ q<d$, and thus $$f\in W^{1, q}(B_{\frac{1}{2}})\ \ \text{for all}\ \ q<d.$$ 
On the other hand, since $ e^f =\frac{1}{|x|^{2k}} $ on $B_{\frac{1}{2}} $, 
$$e^f   \notin L^1(B_{\frac{1}{2}}) \ \ \text{if}\ \ k\ge \frac{d}{2}.$$
\end{example}
\medskip

\begin{proof}[Proof of Proposition \ref{pde}:]
Firstly, according to Lemma \ref{MTC}, $e^f\in L_{loc}^p(\Omega)$ for all  $1\le p<\infty$. By H\"older's inequality,  we have $e^f\nabla f\in L_{loc}^{q}(\Omega)$ for each $1\le q<d$, and for every Lipschitz subdomain $\tilde \Omega\subset\subset \Omega$, 
\begin{equation}\label{qq1}
\|e^f\nabla f\|_{L^q(\tilde \Omega)} \le  \|\nabla f\|_{L^d(\tilde\Omega)}\|e^{f}\|_{L^{q^*}(\tilde \Omega)}<\infty,
\end{equation}
where  $q^*: = \frac{dq}{d-q}$. We next show that $\nabla e^f = e^f\nabla f$ in the sense of distributions. If so, then $e^f\in W_{loc}^{1, q}(\Omega)$ by \eqref{qq1} for all $1\le q<d$, completing the proof.  

$\nabla e^f = e^f\nabla f $ is trivially true if $f\in C^\infty(\Omega)$ as a consequence of the classical chain rule. 
For general $f\in W_{loc}^{1, d}(\Omega)$ and any Lipschitz subdomain $\tilde \Omega\subset\subset \Omega$, let $f_k\in  C^\infty({\tilde \Omega})$ converge  to $ f$ in the $W^{1, d}({\tilde \Omega}) $ norm. By Sobolev embedding theorem, for all $1\le p<\infty$, \begin{equation}\label{e1}
    \|f_k-f\|_{L^p({\tilde \Omega})}\rightarrow 0 
\end{equation} as $k\rightarrow \infty$.  Moreover, applying  Lemma \ref{MTC} to $f$ and $f_k$, we have $e^f, e^{f_k}\in L^p({\tilde \Omega})$ for all $1\le p<\infty$, with    \begin{equation}\label{e2}
    \|e^f\|_{L^p({\tilde \Omega})} + \|e^{f_k}\|_{L^p({\tilde \Omega})}\le C
\end{equation}  
for some constant $C$ dependent only on $\|f\|_{W^{1, d}({\tilde \Omega})}$, ${\tilde \Omega}$ and $p$.

We claim that  $ e^{f_k}\rightarrow e^f$ in the $L^p$ norm, $1\le p<\infty$,  and $ \nabla e^{f_k}\rightarrow e^f\nabla f$ in the $L^q$ norm, $1\le q<d$. We shall need  the following elementary inequality as a consequence of the  mean-value theorem: for $z_1, z_2\in \mathbb C,$
$$ |e^{z_1} -e^{z_2}| \le \sup_{t\in[0,1]}\left|e^{tz_1+(1-t)z_2}\right||z_1- z_2| \le \left|e^{|z_1|}+ e^{|z_2|}\right||z_1- z_2|. $$
Making use of this inequality,  H\"older's inequality and \eqref{e1}-\eqref{e2}, we have  for every $1\le p<\infty$,
\begin{equation}\label{e3}
    \begin{split}
       \|e^{f_k} -e^f\|_{L^p({\tilde \Omega})} \le&   \| (e^{|f_k|} +e^{|f|}) |f_k-f|\|_{L^p({\tilde \Omega})} \le \|e^{|f_k|} +e^{|f|}\|_{L^{2p}({\tilde \Omega})}\|f_k-f\|_{L^{2p}({\tilde \Omega})} \rightarrow 0 
    \end{split}
\end{equation}
 as $k\rightarrow \infty$. Moreover, for each $q<d$, noting that  $\nabla e^{f_k} = e^{f_k}\nabla f_k$,   by \eqref{e2} and \eqref{e3}
\begin{equation*}
    \begin{split}
       \|\nabla e^{f_k} - e^f\nabla f\|_{L^q({\tilde \Omega})} \le&    \|(e^{f_k} - e^f)\nabla f_k\|_{L^q({\tilde \Omega})} +  \| e^f(\nabla f_k-\nabla f)\|_{L^q({\tilde \Omega})} \\
        \le&   \|e^{f_k} - e^f\|_{L^{q^*}({\tilde \Omega})}\|\nabla f_k\|_{L^d({\tilde \Omega})} +  \| e^f\|_{L^{q^*}({\tilde \Omega})}\|\nabla f_k-\nabla f\|_{L^d({\tilde \Omega})} \rightarrow 0 
    \end{split}
\end{equation*}
 as $k\rightarrow \infty$.    The claim is proved. In particular, it immediately gives  $ e^{f_k}\rightarrow e^f$ and $ \nabla e^{f_k}\rightarrow e^f\nabla f$  on $\tilde \Omega$ in the   sense of distributions, and thus   $\nabla e^f = e^f\nabla f$ on $\tilde \Omega$ in the sense of distributions. 
\end{proof}

\medskip

At the end of the section, we discuss another immediate application of Lemma \ref{MTC}. We say a function $f$ to be  H\"older at a point $x_0$ if there exists some $\alpha\in (0, 1]$ and a constant $C>0$ such that for all $x$ near $x_0$, 
$$|f(x)-f(x_0)|\le C|x-x_0|^\alpha.$$  The following corollary       states that  the  logarithms of such functions are never in $W^{1,d}$ near $x_0$. This also generalizes a similar result in \cite{CPZ} for Lipschitz functions (i.e., $\alpha =1$). 

\begin{cor}
Let $f$ be a function   near $x_0$ in $\mathbb R^d$ and be H\"older at $x_0$.  Then $\ln|f(x)-f(x_0)|\notin W^{1,d}$ near $x_0$.
\end{cor}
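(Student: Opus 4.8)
The plan is a short contradiction argument powered by Lemma \ref{MTC}. Suppose, for contradiction, that $\ln|f(x)-f(x_0)|\in W^{1,d}$ near $x_0$. I would fix a radius $r>0$ small enough that $B_r(x_0)$ lies inside the domain of $f$, that the H\"older bound $|f(x)-f(x_0)|\le C|x-x_0|^\alpha$ holds on $B_r(x_0)$ with $\alpha\in(0,1]$, and that $\ln|f(x)-f(x_0)|\in W^{1,d}(B_r(x_0))$. Since $B_r(x_0)$ is a bounded Lipschitz domain, set $g:=f-f(x_0)$; then $\log|g|\in W^{1,d}(B_r(x_0))$ — in particular $\log|g|$ is finite a.e., so $g\neq 0$ a.e. and $\frac1g$ is defined a.e. — and the ``equivalently'' clause of Lemma \ref{MTC} yields $\frac1g\in L^p(B_r(x_0))$ for every $1\le p<\infty$.

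On the other hand, the H\"older hypothesis forces $\bigl|\frac{1}{g(x)}\bigr|\ge C^{-1}|x-x_0|^{-\alpha}$ for a.e.\ $x\in B_r(x_0)$, so for any exponent $p$, integration in polar coordinates gives
$$\int_{B_r(x_0)}\Bigl|\frac{1}{g(x)}\Bigr|^{p}\,dv\ \ge\ C^{-p}\int_{B_r(x_0)}|x-x_0|^{-\alpha p}\,dv\ =\ C^{-p}\,w_{d-1}\int_0^r\rho^{\,d-1-\alpha p}\,d\rho,$$
and the last integral diverges precisely when $d-1-\alpha p\le -1$, i.e.\ when $p\ge d/\alpha$. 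Taking $p=d/\alpha<\infty$ therefore shows $\frac1g\notin L^{d/\alpha}(B_r(x_0))$, contradicting the conclusion of Lemma \ref{MTC}. Hence $\ln|f(x)-f(x_0)|\notin W^{1,d}$ near $x_0$, as claimed.

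I do not expect any genuine obstacle here: the only points needing a line of care are the passage to a bounded Lipschitz neighborhood so that Lemma \ref{MTC} applies verbatim, and the remark that $\log|g|\in W^{1,d}$ already rules out $g$ vanishing on a set of positive measure. The heart of the matter is simply that the Moser--Trudinger estimate makes $1/|g|=e^{-\log|g|}$ lie in every $L^p$, while the H\"older decay of $g$ makes $1/|g|$ blow up at least like $|x-x_0|^{-\alpha}$, which is not locally in $L^{d/\alpha}$; the two are incompatible. (When $\alpha=1$ this recovers the Lipschitz case of \cite{CPZ}.)
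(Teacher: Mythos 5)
Your argument is correct and is essentially the paper's own proof: both assume $\ln|f-f(x_0)|\in W^{1,d}$ near $x_0$, invoke Lemma \ref{MTC} to conclude $1/|f-f(x_0)|=e^{-\ln|f-f(x_0)|}\in L^p$ for every $p<\infty$, and then contradict this with the H\"older lower bound $1/|f(x)-f(x_0)|\ge C^{-1}|x-x_0|^{-\alpha}$, which fails to be in $L^p$ once $p\ge d/\alpha$. Your added remarks on choosing a bounded Lipschitz ball and on $g\neq 0$ a.e.\ are fine but do not change the route.
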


\begin{proof}
Supposing not. Then by Lemma \ref{MTC}, for all $1\le p<\infty$,
$$\frac{1}{|f(x)-f(x_0)|} = e^{- \ln|f(x)-f(x_0)|} \in L^p $$
near $x_0$. However, by the H\"older  property of $f$ at $x_0$, this would imply that there exist  some constants $0<\alpha<1$ and   $C>0$, such that
$$\frac{1}{|f(x)-f(x_0)|} \ge \frac{1}{C|x-x_0|^\alpha}\in L^p  $$
near $x_0$,  which is absurd when $p\ge \frac{d}{\alpha}$.
\end{proof}

\section{Unique continuation for the target dimension $N=1$}\label{se3}
In this section, we prove  the classification Theorem \ref{main} of weak solutions to $\bar\partial$,   and the unique continuation   Theorem \ref{main2} for scalar  solutions ($N=1$) in a domain $\Omega\subset \mathbb C^n, n\ge 1$.   Let us first point out that, given any solution $u$ to \eqref{eqn},  a formal computation leads to 
$$0= \bar\partial^2 u = u\bar\partial V - V\wedge \bar\partial u = u\bar\partial V - u V\wedge V = u\bar\partial V\ \ \text{on}\ \ \Omega. $$
In this sense, it is   natural to assume $V$ to be $\bar\partial$-closed  in Theorem \ref{main} and Theorem \ref{main2}. 

The following lemma concerning  the local ellipticity of $\bar\partial$ for $(0,1)$ data with $W_{loc}^{k, p}$  coefficients  is well-known for $p=2$ (see, for instance, \cite[Theorem 4.5.1]{CS}). However, it seems difficult to find    a reference for general $p, 1<p<\infty$. Since this property will be repeatedly used in the paper, we present a proof below. 

\begin{lem}\label{el}
    Let $\Omega$ be a   domain in $\mathbb C^n$, and $1<p<\infty$. Let $V \in L_{loc}^{p}(\Omega)$ be a $\bar\partial$-closed $(0,1)$ form on $\Omega$. Then every  solution to $\bar\partial f =V $ on $\Omega$ in the sense of distributions belongs to $ W_{loc}^{1, p}(\Omega) $. Furthermore,  if $V \in W_{loc}^{k, p}(\Omega)$,  $k\in \mathbb Z^+$, then every  solution to $\bar\partial f =V $ on $\Omega$ in the sense of distributions belongs to $ W_{loc}^{k+1, p}(\Omega) $.
\end{lem}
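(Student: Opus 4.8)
The plan is to reduce everything to the standard interior $L^p$ regularity of the Cauchy–Riemann complex, which is in turn built on the Calderón–Zygmund theory for singular integral operators. First I would fix a point $z_0\in\Omega$ and a ball $B=B(z_0,2r)\subset\subset\Omega$, and a cutoff $\chi\in C_c^\infty(B)$ with $\chi\equiv 1$ on $B(z_0,r)$. Given a distributional solution $f$ of $\bar\partial f=V$ on $\Omega$ with $V\in L^p_{loc}$, the function $\chi f$ satisfies $\bar\partial(\chi f)=\chi V+f\bar\partial\chi$ on all of $\mathbb C^n$, with right-hand side a compactly supported $(0,1)$ form; note $f\in L^1_{loc}$ is automatic since it is a distribution solving a first-order equation with $L^p_{loc}$ datum, but to be safe one first observes $f\in L^q_{loc}$ for some $q>1$ by a preliminary application of the solution operator (or simply works locally where $f$ is already known to be a distribution and the inhomogeneous term lies in $L^1$). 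The key analytic input is the existence of a bounded solution operator: there is an integral operator $T$ (built from the Bochner–Martinelli–Koppelman kernel, or in each variable from the Cauchy kernel) such that for any compactly supported $(0,1)$ form $g$ on $\mathbb C^n$ with $L^p$ coefficients, $1<p<\infty$, one has $\bar\partial(Tg)=\partial\!\!\!\bar{}\,$-closed part of $g$ appropriately, and $\|Tg\|_{W^{1,p}}\lesssim \|g\|_{L^p}$ — this is where the Calderón–Zygmund $L^p$ boundedness of the relevant singular integrals enters, and it is the one place one genuinely needs $1<p<\infty$ rather than $p=1$ or $p=\infty$.

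Next I would set $w:=\chi f-T(\chi V+f\bar\partial\chi)$. Since both $\chi f$ and $T(\cdots)$ solve the same $\bar\partial$-equation (using that $\chi V+f\bar\partial\chi$ is $\bar\partial$-closed as a distribution away from where we need it — more precisely, using $\bar\partial V=0$ and the Leibniz rule, $\bar\partial(\chi V + f\bar\partial\chi) = \bar\partial\chi\wedge V + \bar\partial f\wedge\bar\partial\chi = \bar\partial\chi\wedge V + (Vf)\wedge\bar\partial\chi=\dots$; one checks this closedness is exactly what makes $T$ invert $\bar\partial$ here), the difference $w$ is $\bar\partial$-closed on $\mathbb C^n$ in the distributional sense, hence holomorphic, hence $C^\infty$ by Weyl's lemma / hypoellipticity of $\bar\partial$ on functions. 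Therefore $\chi f = w + T(\chi V+f\bar\partial\chi)$, and the right-hand side lies in $W^{1,p}_{loc}$: the first term is smooth and the second is in $W^{1,p}$ by the mapping property of $T$. Restricting to $B(z_0,r)$ gives $f\in W^{1,p}(B(z_0,r))$, and since $z_0$ was arbitrary, $f\in W^{1,p}_{loc}(\Omega)$.

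For the higher-order statement, with $V\in W^{k,p}_{loc}$, I would argue by induction on $k$. The base case $k=0$ is the above. For the inductive step, once $f\in W^{k,p}_{loc}$ is known, differentiate the equation: any first-order constant-coefficient derivative $D$ (holomorphic or antiholomorphic) of $f$ satisfies $\bar\partial(Df)=D V$ in the distributional sense, and $DV\in W^{k-1,p}_{loc}$ is again $\bar\partial$-closed; by the inductive hypothesis $Df\in W^{k,p}_{loc}$, and since this holds for every such $D$, we get $f\in W^{k+1,p}_{loc}(\Omega)$. The main obstacle — and the reason a careful write-up is warranted — is the first paragraph: one must pin down a solution operator $T$ with the simultaneous properties of being an honest left inverse to $\bar\partial$ on the relevant space of $\bar\partial$-closed compactly supported $(0,1)$ forms \emph{and} being bounded $L^p\to W^{1,p}$ for all $1<p<\infty$. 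For $n=1$ this is just the Cauchy transform together with the Beurling–Ahlfors operator bound; for $n\ge 2$ one uses the Bochner–Martinelli–Koppelman formula, whose components are classical Calderón–Zygmund kernels (homogeneous of degree $1-2n$ with mean-zero-type angular parts after one differentiation), so their derivatives are bounded on $L^p$, $1<p<\infty$. Everything else — the cutoff bookkeeping, the Leibniz computations, and the induction — is routine.
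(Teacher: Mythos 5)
Your overall strategy (Bochner--Martinelli/Cauchy kernels, Calder\'on--Zygmund mapping properties, and holomorphy of a $\bar\partial$-closed remainder) is the same analytic core as the paper's proof, and your induction on $k$ via differentiating the equation is a perfectly good substitute for the paper's direct potential-theoretic $W^{k+1,p}$ estimate. The structural difference is where the cutoff goes: the paper truncates the \emph{datum} $V$, builds one particular solution $f_0=v_0+u_0\in W^{k+1,p}_{loc}$ (smooth part from the annulus term plus the Bochner--Martinelli potential of $\eta V$), and then observes that any distributional solution differs from $f_0$ by a $\bar\partial$-closed distribution, i.e.\ a holomorphic (hence smooth) function. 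You instead truncate the \emph{solution}, writing $\chi f = w + T\bigl(\chi V + f\bar\partial\chi\bigr)$.

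That choice creates the one genuine gap in your argument: to even form $T(f\bar\partial\chi)$ as an integral and to invoke the $L^p\to W^{1,p}$ bound, you need $f$ to be a function with some $L^q_{loc}$ integrability, $q>1$, while the lemma only assumes $f$ is a distribution. Your assertion that ``$f\in L^1_{loc}$ is automatic since it is a distribution solving a first-order equation with $L^p_{loc}$ datum'' is not a valid general principle (e.g.\ $\partial_{x_1}u=0$ in $\mathbb R^2$ has distributional solutions that are not in $L^1_{loc}$); for $\bar\partial$ it is true, but that is essentially the regularity statement you are trying to prove, so it cannot be assumed. Moreover $L^1$ would not suffice anyway, since Calder\'on--Zygmund boundedness fails at $p=1$, so one would additionally need a bootstrap in the exponent. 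Your suggested repair --- ``a preliminary application of the solution operator'' --- is the right idea, but once spelled out it amounts to constructing a local $W^{1,p}$ solution of $\bar\partial f_0=V$ (e.g.\ $f_0=T(\chi V)$ up to a correction) and noting that $f-f_0$ is holomorphic; at that point you are done and the decomposition of $\chi f$ is no longer needed. In other words, the clean fix collapses your proof into the paper's: cut off $V$, not $f$, and let hypoellipticity of $\bar\partial$ handle the arbitrary distributional solution at the very end. (Minor point: the Leibniz computation you left unfinished is unnecessary --- $\chi V+f\bar\partial\chi=\bar\partial(\chi f)$ is automatically $\bar\partial$-closed --- but this only matters once $f\bar\partial\chi$ is known to be an admissible datum, which is again the issue above.)
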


\begin{proof}
Suppose the $\bar\partial$-closed $(0,1 )$ form $V$ belongs to $ W_{loc}^{k, p}(\Omega)$ for some $k\in \mathbb Z^+\cup\{0\}$. Since the lemma is  purely local, and every other solution is differed only by a holomorphic function, it suffices to show that for any $z_0\in \Omega$, there exist a neighborhood $U$ of $z_0$ and a solution $f_0$ to $\bar\partial f =V $ on $U$ in the sense of distributions, such that   $f_0\in W_{loc}^{k+1, p}(U)$. For simplicity, let $z_0 =0$ and   $B_{2r}\subset\subset  \Omega$ for some $r>0$.  Let $\eta$ be a compactly supported function   on $B_{2r}$ such that $\eta=1$ on $B_{r}$.   

 Given a  mollifier $\phi$    on $\mathbb C^n$, we have $V_\epsilon: =V*\phi_\epsilon\in C^\infty(B_{2r})$, $V_\epsilon $ is $\bar\partial$-closed on $B_{2r}$ and $V_\epsilon\rightarrow V$ in the $L^p(B_{2r})$ norm. 
 Applying the Bochner-Martinelli representation formula   to $\eta V_\epsilon$ on $B_{2r}$, one has
 \begin{equation*} 
     \eta(z)V_\epsilon(z) = -\int_{B_{2r}}\bar\partial(\eta(\zeta)V_\epsilon(\zeta))\wedge B_1(\zeta, z) - \bar\partial \int_{B_{2r}}\eta(\zeta)V_\epsilon(\zeta)\wedge B_0(\zeta, z),  \ \   z\in B_{2r},
 \end{equation*}
 where for $q=0, 1$, $$B_q(\zeta, z) = -*\partial_\zeta \overline{\Gamma_q(\zeta, z)}$$ with 
 $$\Gamma_q(\zeta, z) = \frac{ (n-2)!}{q!2^{q+1}\pi^n} \frac{1}{|\zeta-z|^{2n-2}}\left(\sum_{j=1}^nd\bar\zeta_jdz_j\right)^q. $$
See, for instance, \cite[Chapter I]{LM}.  Note that $\bar\partial (\eta V_\epsilon) = \bar\partial \eta \wedge V_\epsilon $ on $B_{2r}$ and $\text{supp} \ \bar\partial \eta\subset  B_{2r}\setminus B_r$. Then   restricting on $B_r$, 
\begin{equation}\label{aaa}
     V_\epsilon(z) = -\int_{B_{2r}\setminus B_{r}}\bar\partial\eta(\zeta) \wedge V_\epsilon(\zeta) \wedge B_1(\zeta, z) - \bar\partial \int_{B_{2r}}\eta(\zeta)V_\epsilon(\zeta)\wedge B_0(\zeta, z),  \ \   z\in B_{r}. 
\end{equation} 
By  Young's convolution inequality, there exists some $C>0$ such that 
$$ \left \|\int_{B_{2r}\setminus B_{r}}\bar\partial\eta(\zeta) \wedge V_\epsilon(\zeta) \wedge B_1(\zeta, z) - \int_{B_{2r}\setminus B_{r}}\bar\partial\eta(\zeta) \wedge V(\zeta) \wedge B_1(\zeta, z)\right \|_{L^1(B_r)}\le C\|V_\epsilon -V\|_{L^1(B_{2r})},$$
which goes to $0$ as $\epsilon\rightarrow 0$. Similarly,
$$ \left \|\int_{B_{2r} } \eta(\zeta) \wedge V_\epsilon(\zeta) \wedge B_0(\zeta, z) - \int_{B_{2r} } \eta(\zeta) \wedge V(\zeta) \wedge B_0(\zeta, z)\right \|_{L^1(B_r)} \rightarrow 0  $$
as $\epsilon\rightarrow 0$. 
Thus passing $\epsilon\rightarrow 0$ in \eqref{aaa}, we obtain
\begin{equation}\label{bm1}
   V (z) = -\int_{B_{2r}\setminus B_{r}} \bar\partial\eta(\zeta) \wedge V(\zeta)\wedge B_1(\zeta, z) - \bar\partial \int_{B_{2r}}\eta(\zeta)V (\zeta)\wedge B_0(\zeta, z) \ \ \text{on}\ \ B_r
\end{equation}
  in the sense of distributions. 

Note that
$$ I: = - \int_{B_{2r}\setminus B_{r}} \bar\partial\eta(\zeta)\wedge V(\zeta) \wedge B_1(\zeta, z)\in C^\infty(B_r), $$
and   $I $ is $\bar\partial$-closed  on $B_r$ by \eqref{bm1}. By ellipticity of $\bar\partial$ for smooth data (see \cite[Theorem 4.5.1]{CS}), there exists a function $v_0\in C^\infty(B_r )$ such that $\bar\partial v_0 = I$ on $B_r $. 
On the other hand, according to the classical potential theory for the fundamental solution of Laplacian, 
$$  u_0: = -\int_{B_r}\eta(\zeta)V(\zeta)\wedge B_0(\zeta, z)\in W^{k+1, p}(B_r).$$
Letting \begin{equation}\label{bm}
    f_0:  = v_0+ u_0,
\end{equation}   we have $f_0\in W_{loc}^{k+1, p}(B_r)$,  and  $\bar\partial f_0 =V $ on $B_r $ in the sense of distributions by \eqref{bm1}.
\end{proof}
\medskip

\begin{proof}[Proof of Theorem \ref{main}: ] Since $V\in L_{loc}^2(\Omega)$ is $\bar\partial$-closed and $\Omega$ is pseudoconvex, by H\"ormander's $L^2$ theory (see \cite[Theorem 4.3.5]{CS}), there exists ${f}\in L_{loc}^2(\Omega)$  satisfying $\bar\partial{f} = V $ on $\Omega$. Noting that  $\bar\partial$ is an elliptic operator of order one for $(0,1)$ data by Lemma \ref{el} 
and $V\in L_{loc}^{2n}(\Omega) $, we further get ${f}\in W_{loc}^{1, 2n}(\Omega)$. 
Hence we can  apply  Proposition \ref{pde} to   $\tilde u: = e^{-{f}}$, and obtain that  $\tilde u \in W_{loc}^{1, q}(\Omega)$ for any $q< 2n$,   and  $\bar\partial \tilde u = -V \tilde u$ on $\Omega$ in the sense of distributions. In particular, $\tilde u\in L^p_{loc}(\Omega)$ for every $p<\infty$ by Sobolev embedding theorem.

 For each solution $u\in H_{loc}^{1}(\Omega)$ to $\bar\partial u = Vu$, consider $h: = u\tilde u$ on $\Omega$. Similarly as in  the proof of Proposition \ref{pde}, we  verify that $\bar\partial h= 0$ on $\Omega$ in the sense of distributions. In detail, let $u_k\in C^\infty(\Omega)\rightarrow u$ in the $H_{loc}^1$ norm. Then for every subdomain  $\tilde \Omega\subset\subset \Omega$, one has 
 \begin{equation}\label{e4}
     \| \bar\partial u_k - Vu\|_{L^2(\tilde \Omega)} = \| \bar\partial u_k - \bar\partial u\|_{L^2(\tilde \Omega)} \rightarrow 0  
 \end{equation} 
as $k\rightarrow \infty$.  Moreover, by   Sobolev embedding theorem
 \begin{equation}\label{e5}
     \|   u_k - u\|_{L^\frac{2n}{n-1}(\tilde \Omega)} \le \|   u_k -   u\|_{H^1(\tilde \Omega)}\rightarrow 0  
 \end{equation} as $k\rightarrow 0$. On the other hand, by H\"older's inequality, for each $k\ge 0$,  $h_k: = u_k\tilde u$ satisfies 
 $$\|h_k - h\|_{L^{1}(\tilde \Omega)} = \|(u_k - u)\tilde u\|_{L^{1}(\tilde \Omega)}\le \|u_k-u\|_{L^{2}(\tilde \Omega)} \|\tilde u\|_{L^{2}(\tilde \Omega)}\rightarrow 0$$ as $k\rightarrow \infty$. 
Since $u_k\in C^\infty(\Omega)$, the product rule applies to  give $\bar\partial h_k = \bar\partial u_k\tilde u  - Vu_k\tilde u $  on $  \Omega$ in the sense of distributions. Consequently, we use  H\"older's inequality again to infer
\begin{equation*}
\begin{split}
       \| \bar\partial h_k\|_{L^1(\tilde \Omega)} =&  \|\bar\partial u_k \tilde u - Vu_k \tilde u\|_{L^1(\tilde \Omega)} \le \|(\bar\partial u_k -Vu) \tilde u\|_{L^1(\tilde \Omega)}  +\| V(u_k-u) \tilde u \|_{L^1(\tilde \Omega)}\\
     \le&  \|\bar\partial u_k -Vu\|_{L^2(\tilde \Omega)}\| \tilde u\|_{L^2(\tilde \Omega)}  +\| V\|_{L^{2n}(\tilde \Omega) }\|u_k-u\|_{L^\frac{2n}{n-1}(\tilde \Omega)}\| \tilde u \|_{L^2(\tilde \Omega)}\rightarrow 0
\end{split}
\end{equation*}
 as $k\rightarrow \infty$ by \eqref{e4}-\eqref{e5}. In particular, $ h_k\rightarrow h$ and $\bar\partial h_k\rightarrow 0$ on $\Omega$  in the sense of distributions. Hence $\bar\partial h=0$ on $\Omega$ in the sense of distributions.  Altogether, $u = \tilde u^{-1}h = e^{f} h$ for some function $f\in W_{loc}^{2n}(\Omega)$, and some holomorphic function $h$ on $\Omega$. Moreover, since $e^f\in W^{1,q}_{loc}(\Omega)$   for all $1\le q<2n$ by Proposition \ref{pde}, so does $u$.
    \end{proof}
\medskip

Recall that  a smooth function vanishes to infinite order (or, is flat) in the jet sense at one point if all its derivatives vanish at that point. We verify below that for smooth functions, flatness in the jet sense and flatness in the $L^2$ sense are equivalent to each other.   

\begin{lem}\label{fn}
    Let $h$ be a smooth function near $x_0\in \mathbb R^d$. Then $h$ vanishes to infinite order in the $L^2$ sense at $x_0$ if and only if $h$ vanishes to infinite order in the  jet sense at $x_0$.  In particular, if $h$ is real-analytic near $x_0$, then either $h\equiv 0$ near $x_0$, or $h$   vanishes to a finite order in the $L^2$ sense at $x_0$.
\end{lem}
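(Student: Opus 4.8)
The plan is to prove the two implications of the equivalence separately and then read off the real-analytic statement. Assume without loss of generality that $x_0 = 0$ and that $h \in C^\infty(B_\rho)$ for some $\rho > 0$.

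First I would handle the easy direction: if $h$ vanishes to infinite order in the jet sense at $0$, then for each fixed $k \ge 1$, Taylor's theorem with remainder — applied using that every derivative of order at most $k-1$ vanishes at $0$ — yields a constant $C_k$ with $|h(x)| \le C_k|x|^k$ for $x$ near $0$. Integrating over $B_r$ in polar coordinates gives $\int_{B_r}|h|^2\,dv \le C_k' r^{2k+d}$, so $r^{-m}\int_{B_r}|h|^2\,dv \le C_k' r^{2k+d-m}\to 0$ as $r\to 0$ as soon as $2k > m-d$; since $m$ is fixed and $k$ is arbitrary, $h$ vanishes to infinite order in the $L^2$ sense.

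For the converse I would argue by contraposition. Suppose not all derivatives of $h$ vanish at $0$, and let $\ell$ be the smallest order of a nonvanishing derivative. Because every derivative of order $< \ell$ vanishes at $0$, the degree-$\ell$ Taylor polynomial of $h$ at $0$ reduces to a homogeneous polynomial $P$ of degree $\ell$, and $P \not\equiv 0$ since some $\partial^\alpha h(0) \ne 0$ with $|\alpha| = \ell$. Since $h \in C^{\ell+1}$, Taylor's theorem gives $|h(x)-P(x)| \le C|x|^{\ell+1}$ near $0$. Scaling $x = ry$ and using homogeneity, $\|P\|_{L^2(B_r)} = c_0\,r^{\ell+d/2}$ with $c_0 := \|P\|_{L^2(B_1)} > 0$ (a nonzero polynomial cannot vanish a.e. on $B_1$), while $\|h-P\|_{L^2(B_r)} \le C'\,r^{\ell+1+d/2}$. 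Hence for all sufficiently small $r$ the triangle inequality gives $\|h\|_{L^2(B_r)} \ge \frac{1}{2}c_0\,r^{\ell+d/2}$, i.e. $\int_{B_r}|h|^2\,dv \ge \frac{1}{4}c_0^2\,r^{2\ell+d}$. Taking $m = 2\ell+d+1$ we get $r^{-m}\int_{B_r}|h|^2\,dv \ge \frac{1}{4}c_0^2\,r^{-1}\to\infty$, so $h$ does not vanish to infinite order in the $L^2$ sense. This establishes the equivalence.

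Finally, if $h$ is real-analytic near $0$ and vanishes to infinite order in the $L^2$ sense, then by the equivalence all derivatives of $h$ vanish at $0$, so its Taylor series at $0$ is identically zero and real-analyticity forces $h \equiv 0$ near $0$; thus a real-analytic $h$ is either identically zero near $0$ or of finite vanishing order there. I do not anticipate a real obstacle: the only mildly delicate points are checking that the degree-$\ell$ Taylor polynomial is a genuinely nonzero homogeneous polynomial (so that $c_0 > 0$) and that the Taylor remainder is controlled uniformly, both standard for $C^\infty$ functions.
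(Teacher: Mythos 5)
Your proposal is correct and follows essentially the same route as the paper: the easy direction via the pointwise bound $|h(x)|\le C|x|^k$, and the converse by isolating the first nonzero homogeneous Taylor polynomial and bounding $\int_{B_r}|h|^2$ from below by a fixed power of $r$. The only cosmetic difference is that you use the $L^2$ triangle inequality and homogeneity scaling where the paper uses the pointwise inequality $|a+b|^2\ge \tfrac34|a|^2-3|b|^2$ together with spherical coordinates; both yield the same lower bound $c\,r^{2\ell+d}$.
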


\begin{proof}
Without loss of generality  let $x_0=0$. If $h$ vanishes to infinite order in the  jet sense at $0$, then for any $m\ge 1 $, there exists a constant $C$ dependent on $m$ such that  $|h(x)|\le C|x|^m$ for  $|x|<<1$. Thus
$$ r^{-m}\int_{|x|<r} |h(x)|^2dv_x\le C  r^{-m}\int_{0}^r t^{d-1+2m} dt \le C r^{m+d}\rightarrow 0$$
 as $r$ goes to $0$. Namely, $h$ vanishes to infinite order in the $L^2$ sense at $0$.

Conversely, suppose  that  $h   $   vanishes to infinite order in the $L^2$ sense, but vanishes to a finite order in the  jet sense  at $0$. Let  $k>0$ be the smallest integer such that the $k$-th order homogeneous Taylor polynomial $p_k$ of $h$ is nonzero, and write   $h(x) = p_k(x)+ q_{k+1}(x)$, where   $q_{k+1}(x)$ is the  remaining term  of the Taylor expansion of $h$. By definition of $p_k$,     
    $$c_0: =  {r^{-2k}}\int_{|x|=1}|p_k(x)|^2dS_x>0 $$
and is  independent of $r$.  Let $r_0$ be such that for all $r<r_0$,
   $$ {r^{-2k}}\int_{|x|=1}|q_{k+1}(x)|^2dS_x\le \frac{c_0}{8}.  $$
    Then for any $0<r<r_0$, making use of the inequality  $|a+b|^2\ge \frac{3}{4}|a|^2- 3|b|^2$ for $a, b\in\mathbb R$, we have
    \begin{equation*}\begin{split}
                \int_{|x|<r} |h(x)|^2dv_x &= \int_{0}^r t^{d-1}\int_{|x|=1}  | p_k(x)+ q_{k+1}(x)|^2 dS_xdt \\
                &\ge  \int_{0}^r t^{d-1}\int_{|x|=1}  \frac{3}{4}| p_k(x)|^2- 3|q_{k+1}(x)|^2 dS_xdt\\
                &\ge \frac{3c_0}{8}\int_{0}^r t^{d-1}t^{2k}dt = \frac{3c_0}{8(d+2k)}r^{d+2k}.
    \end{split}
    \end{equation*}
    In particular,
    $$  \lim_{r\rightarrow 0} r^{-(d+2k) }\int_{|x|<r} |h(x)|^2dv_x\ge \frac{3c_0}{8(d+2k)}.$$
Contradiction! 

If $h$ is real-analytic near $0$, then either $h\equiv 0$ near $0$, or $h$ vanishes to a  finite order in the jet sense at $0$, which is further equivalent to vanishing  to a finite order in the $L^2$ sense at $0$. The proof is complete.\end{proof}
\medskip

\begin{proof}[Proof of Theorem \ref{main2}:]
 Again, let $z_0=0$, and $r_0>0$ be small  such that  $ B_{r_0} \subset \Omega$.  For each solution $u (\not\equiv 0)$ to $\bar\partial u = Vu$ on $B_{r_0} $ in the sense of distributions, by Theorem \ref{main}  there exists  a holomorphic function $h ( \not\equiv 0)$ on $B_{r_0}$ and a function $f\in W^{1, 2n}_{loc}(B_{r_0})$ such that    $u =e^f h $ on $B_{r_0}$. Applying Lemma \ref{MTC} to $-f$, we further have  $h = ue^{-f}$ with $e^{-f}\in L^2_{loc}(B_{r_0})$ in particular.  Consequently,  there exist  some constants $C_1, C_2>0$ such that  $$\sup_{|z|<\frac{r_0}{2}}|h|\le C_1 \ \ \text{and}\ \    \int_{ |z|<\frac{r_0}{2}}|e^{-f}|^2dv_z\le C_2.$$   
    Making use of  H\"older's inequality, we have
   for any $0<r<\frac{r_0}{2}$, 
\begin{equation*}
\begin{split}
    \left(\int_{|z|<r} |h|^2dv_z\right)^2 &\le C_1^2    \left(\int_{|z|<r} |h|dv_z\right)^2 
     \le C_1^2\int_{|z|<r} |u|^2dv_z \int_{|z|<r} |e^{-f}|^2dv_z \\
        & \le  C_1^2C_2   \int_{|z|<r} |u|^2dv_z.
\end{split}
\end{equation*}
  Since $h$ vanishes to a finite order in the $L^2$ sense at $0$ according to Lemma \ref{fn}, the same holds true for  $u$.  This completes the proof. 
\end{proof}
\medskip

The assumption  $V\in L^{2n}_{loc}$ in Theorem \ref{main2} can not be relaxed in the following sense. For each $p<2n$, there exists a differential equation $\bar\partial u = Vu$ with $V\in L^{p}_{loc}$,  and this equation has a nontrivial solution that vanishes to  infinite order  at a specific point. 

\begin{example}\label{pf}
For each $1\le p<2n$, let $\epsilon  \in (0, \frac{2n}{p}-1)$ and consider 
$$  \bar\partial u = \frac{\epsilon zd\bar z}{2 |z|^{\epsilon+2}}u\ \ \text{on}\ \ B_1\subset \mathbb C^n.$$
It is straightforward to verify that $ V:  =\frac{\epsilon zd\bar z}{2 |z|^{\epsilon+2}}\in L^p(B_1)$. On the other hand, $u_0= e^{-\frac{1}{|z|^\epsilon}}$ is a nontrivial solution to the above equation that  vanishes to infinite order in the $L^2$ sense at $0$. 

\end{example}

The proof to Theorem \ref{main2} indicates that solutions to \eqref{eqn} inherit the unique continuation property from that of holomorphic functions. However,    from the perspective of zero set, due to the presence of the other factor $e^f$, such solutions could exhibit a much   larger zero set than holomorphic functions do. In fact, the following example constructs a global potential  $V\in L^{2n}(\mathbb C^n)$, such that the ``zero set" of every weak solution to \eqref{eqn} with this potential contains a countable dense set in $\mathbb C^n$. It is noteworthy that the exponential factor $e^f$ no longer contributes zeros if $V\in L^p_{loc}, p>2n$, since in this case $f$   becomes continuous  by Sobolev embedding theorem.  

\begin{example}\label{ex5}
    Let $\phi(z) = -\ln(-\ln|z|)$ on $B_{\frac{1}{3}}\subset\mathbb C^n$,  and $\chi(\ge 0)\in C_c^\infty(B_{\frac{1}{3}})$ such that $\chi=1$ on $B_{\frac{1}{6}} $. Then $\psi: = \chi\phi\in W^{1, 2n}(\mathbb C^n)$ with  $\psi\le 0$ on $\mathbb C^n$. Given a countable dense  set $S: =\cup_{j=1}^\infty\{a_j\} \subset\mathbb C^n$, consider
    $$ f(z): = \sum_{j=1}^\infty 2^{-j}\psi(z-a_j), \ \ z\in \mathbb C^n. $$
    Clearly, $f\in W^{1, 2n}(\mathbb C^n)$. Further let $$V: = \bar\partial f\ \ \text{on} \ \ \mathbb C^n.$$ (Note that $\text{supp}\  V = \mathbb C^n$.) 
    According to the construction of $V$ and  Theorem \ref{main}, any solution $u$ to $\bar\partial u = Vu$ on $\mathbb C^n$ in the sense of distributions is of the from $e^f h$, for some holomorphic function $h$ on $\mathbb C^n$. Note that near every  $a_j\in S$, $$|e^f| \le \frac{1}{\left|\ln|z-a_j|\right|^{2^{-j}}}.$$  Hence   for all  $a_j\in S$,  $$\lim_{z\rightarrow a_j}u = 0. $$  
    On the other hand,    $u$ vanishes to a finite order in the $L^2$ sense  at  these points, as a consequence of Theorem \ref{main2}. 
\end{example}

As mentioned in the introduction, when the solutions are real-valued, Theorem \ref{main2} can be reduced to   a unique continuation property in \cite{CPZ} for the inequality $|\nabla  u|\le V|u|$ with $V\in L^{2n}_{loc}$, due to the equivalence of $\bar\partial$ and $\nabla$ for real-valued functions. The following example   constructed in \cite{GR} by Gong and Rosay 
carries a family of continuous solutions to   $\bar\partial u = Vu$ with some $V\in L^{2n}_{loc}$, yet  $\frac{|\nabla u|}{|u|}\notin L^{2n}_{loc}$, as a result of which \cite{CPZ} fails to apply. Instead,  we may use Theorem \ref{main2} to  conclude that none of the nontrivial solutions vanishes to infinite order at any point in the $L^2$ sense.

  \begin{example}\label{gr}
    Let $\{a_j\}_{j=1}^\infty $ be a sequence of distinct points in $B_\frac{1}{2}\subset \mathbb C^n$ convergent to $0$ and consider \begin{equation}\label{eq2}
        \bar\partial u =Vu\ \ \ \ \text{on}\ \ B_\frac{1}{2} 
    \end{equation}  with
    $$V: = \frac{zd\bar z}{|z|^2|\ln|z|^2|^2\Pi_{j=1}^\infty \left|\ln|z-a_j|^2\right|^{\frac{1}{j^2}}} +\sum_{k=1}^\infty\frac{(z-a_k)d\bar z}{ k^2|z-a_k|^2\ln|z|^2  \left|\ln|z-a_k|^2\right|^{\frac{1}{k^2}+1}\Pi_{j\ne k} \left|\ln|z-a_j|^2\right|^{\frac{1}{j^2}}}.$$  Then   $V\in L^{2n}(  B_\frac{1}{2})$.    \eqref{eq2} possesses a family of nontrivial solutions. In fact, for every holomorphic function $h$ on $B_\frac{1}{2} $,
    $$ u^h(z): = \frac{h(z)}{\ln|z|^2\Pi_{j=1}^\infty \left|\ln|z-a_j|^2\right|^{\frac{1}{j^2}}}  $$
 is continuous on $B_\frac{1}{2}$  and satisfies $\bar\partial u =Vu$ on $B_\frac{1}{2}\setminus \{\cup_{j=1}^\infty\{a_j\}\cup\{0\} \} $.  Applying a general removable singularity result in \cite{HP}, one further has  $\bar\partial u^h =Vu^h$ to hold on $ B_\frac{1}{2} $. 
 Note that the zero set $(u^h)^{-1}(0) = \cup_{j=1}^\infty\{a_j\}\cup h^{-1}(0)\cup\{0\} $. According to Theorem \ref{main2}, none of the nontrivial solutions to \eqref{eq2} vanishes to infinite order in the $L^2$ sense at any of these zero points.

In particular, if $h$ in the expression of $u^h$ is a holomorphic function of one variable with zeroes on $B_\frac{1}{2} $ (say, $h(z)=z_1$),  then    
$$|\nabla u^h| \approx |\bar\partial u^h| +|\partial u^h| \approx \left(V + \frac{|\partial h|}{|h|}\right)  |u^h|.$$ Since $\frac{\partial h}{h}\notin  L^2(B_\frac{1}{2})$ near any zero of $h$ (see  \cite[Proposition 5.2]{CPZ}), we have $$\frac{|\nabla u^h|}{|u^h|}\notin L^{2n}_{loc}(B_\frac{1}{2}),$$ where \cite{CPZ} fails to apply. 
\end{example}

One can compare Theorem \ref{main2} with a uniqueness result  for Lipschitz functions in \cite{CPZ}: if  $u$ is Lipschitz and satisfies $\nabla u =Vu$ for some $V\in L^{2n}(B_\frac{1}{2})$ and $u(0)=0$, then $u\equiv 0$. Note that if the holomorphic function $h$  in  Example \ref{gr} is nowhere zero on   $B_\frac{1}{2}$,  then the continuous function $u^h $ satisfies $\nabla u^h =Vu^h$ for some $V\in L_{loc}^{2n}(B_\frac{1}{2})$, and  has infinite many zeros on $B_\frac{1}{2}$. The uniqueness property fails for $u^h $   since it does not belong to $   Lip(B_\frac{1}{2})$.

\section{Unique continuation for $L^p$ potentials with $p\ge 2n$ }\label{s4}
In this section we prove Theorem \ref{main3} for smooth vector-valued solutions (where the target dimension $N\ge 1$) to  $ |\bar\partial u| \le V|u|$ a.e.  on a domain in $\mathbb C^n$  with $L^p$ potentials, $p\ge 2n$. When $N\ge 1$, this inequality  with a solution $u = (u_1, \ldots, u_N)$  reads as   \begin{equation*} 
     |\bar\partial u|: =\left(\sum_{j=1}^n\sum_{k=1}^N |\bar\partial_j u_k|^2\right)^{\frac{1}{2}}\le V \left(\sum_{k=1}^N|u_k|^2 \right)^{\frac{1}{2}}: = V|u|. 
\end{equation*}
The following unique continuation properties have been proved  for $L_{loc}^2$ potentials.   

\begin{theorem}\cite{PZ}\label{pz}
Let $\Omega$ be a  domain in $\mathbb C^n$. Suppose $u: \Omega\rightarrow \mathbb C^N$ with $u\in H^1_{loc}(\Omega)$,   and satisfies $ |\bar\partial u|\le V|u|$ a.e.\ on $\Omega$ for some $V\in L_{loc}^2(\Omega)$.\\ 
1). The weak unique continuation holds: if $u $ vanishes  in an open subset of $ \Omega$,   then $u$ vanishes identically.\\
 2). If $n=1$, then  the (strong) unique continuation holds: if $u$ vanishes to infinite order in the $L^2$ sense at some $z_0\in \Omega$, then $u$ vanishes identically.  
\end{theorem}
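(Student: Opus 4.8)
Everything will be based on a Carleman-type inequality for $\bar\partial$ on $\mathbb C$ with radial power weights centered at the flat point: I would first derive such an inequality (with a gain of integrability), then run the standard Carleman bootstrap to obtain the one-variable strong unique continuation in part 2, and finally deduce the weak unique continuation in part 1 (for arbitrary $n$) by slicing the domain with parallel complex lines and invoking the one-variable result on a.e.\ slice.

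\textbf{The weighted estimate.} Fix $z_0$ and write $\rho(z)=|z-z_0|$. Expanding a test function $w$ in Fourier modes in the angular variable around $z_0$, one sees in polar coordinates that $\bar\partial$ carries the $k$-th angular mode of $w$ to the $(k+1)$-st via $\tfrac12\bigl(\partial_r-\tfrac{k}{r}\bigr)$; conjugating by $\rho^{-\tau}$ replaces this by $\tfrac12\bigl(\partial_r+\tfrac{\tau-k}{r}\bigr)$, and an integration by parts gives, mode by mode and then after summation, the $L^2$ Carleman inequality
\begin{equation*}
\bigl\|\rho^{-\tau-1}w\bigr\|_{L^2(\mathbb C)}\;\le\;C\,\bigl\|\rho^{-\tau}\bar\partial w\bigr\|_{L^2(\mathbb C)},\qquad w\in C_c^\infty(\mathbb C\setminus\{z_0\}),
\end{equation*}
with $C$ independent of $\tau$ whenever $\dist(\tau,\mathbb Z)=\tfrac12$. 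This bare $L^2$ estimate does not suffice for a potential in the critical class $L^2$, since absorbing $\rho^{-\tau}Vu$ into $\|\rho^{-\tau-1}u\|_{L^2}$ would demand $\|\rho V\|_{L^\infty}$ small. What does suffice — and is precisely the weighted Cauchy-integral estimate one has to establish (the input from \cite{PZ}) — is the version carrying a gain of integrability matching the scaling of $\bar\partial$ in real dimension two:
\begin{equation*}
\bigl\|\rho^{-\tau}w\bigr\|_{L^{q^*}(\mathbb C)}\;\le\;C\,\bigl\|\rho^{-\tau}\bar\partial w\bigr\|_{L^q(\mathbb C)},\qquad \tfrac1{q^*}=\tfrac1q-\tfrac12,\quad 1<q<2,
\end{equation*}
with $C$ uniform over $\tau$ with $\dist(\tau,\mathbb Z)=\tfrac12$; this is exactly balanced against the H\"older bound $\|Vg\|_{L^q}\le\|V\|_{L^2}\|g\|_{L^{q^*}}$. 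Proving this inequality uniformly in the large parameter $\tau$ — equivalently, controlling the resonant angular mode $k\approx\tau$, for which the one-dimensional operator $(\partial_r+\tfrac{\tau-k}{r})^{-1}$ is a Hardy-type operator with no integrability gain of its own — is the technical heart of the argument, and is where the explicit form of the Cauchy kernel $(z-\zeta)^{-1}$, which lies in weak-$L^2$ on $\mathbb C$, must be used.

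\textbf{Strong unique continuation for $n=1$.} Suppose $u$ vanishes to infinite order in the $L^2$ sense at $z_0$. I will show $u\equiv0$ on a ball $B_{\rho_0}(z_0)$ whose radius depends only on $\dist(z_0,\partial\Omega)$ and the local $L^2$ size of $V$; granting this, the set of points at which $u$ vanishes to infinite order is open and closed in $\Omega$, hence equals $\Omega$. Choose $\rho$ with $B_{4\rho}(z_0)\subset\Omega$ and $C\|V\|_{L^2(B_{2\rho})}\le\tfrac12$, a cutoff $\chi\equiv1$ on $B_\rho$ supported in $B_{2\rho}$, and an inner cutoff $\zeta_\delta$ that vanishes on $B_\delta$ and equals $1$ off $B_{2\delta}$. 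Apply the gained Carleman inequality (after a mollification) to $w=\zeta_\delta\chi u$; expand $\bar\partial w=\zeta_\delta\chi\,\bar\partial u+\chi\,\bar\partial\zeta_\delta\cdot u+\zeta_\delta\,\bar\partial\chi\cdot u$ and use $|\bar\partial u|\le V|u|$. The potential term is absorbed by the choice of $\rho$; the inner error term is $O\bigl(\delta^{-\tau-2+2/q}\|u\|_{L^2(B_{2\delta})}\bigr)$, which tends to $0$ as $\delta\to0$ precisely because $u$ vanishes to infinite order at $z_0$; letting $\delta\to0$ leaves
\begin{equation*}
\bigl\|\rho^{-\tau}u\bigr\|_{L^{q^*}(B_{\rho/2})}\;\le\;C\,\rho^{-\tau}\,\|u\|_{L^{q^*}(\rho<\rho(z)<2\rho)}.
\end{equation*}
Since $\rho^{-\tau}\ge(\rho/2)^{-\tau}=2^\tau\rho^{-\tau}$ on $B_{\rho/2}$, this reads $2^\tau\|u\|_{L^{q^*}(B_{\rho/2})}\le C\|u\|_{L^{q^*}(\text{annulus})}$; letting $\tau\to\infty$ through $\mathbb Z+\tfrac12$ forces $u=0$ on $B_{\rho/2}$. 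For vector-valued $u$ the estimate is applied componentwise, which changes nothing, since $|\bar\partial u|$ and $|u|$ are the $\ell^2$-norms of the components.

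\textbf{Weak unique continuation for general $n$, and the main obstacle.} Assume $u$ vanishes on a nonempty open subset and let $E=\{z\in\Omega:u\equiv0\text{ a.e.\ in a neighborhood of }z\}$, which is open and nonempty; I claim it is closed. Let $p\in\overline E\cap\Omega$, take a ball $B=B(p,r)\subset\subset\Omega$ meeting $E$, so $u=0$ a.e.\ on some ball $B'\subset B$, and fix a complex direction $v_0$ so that the complex line through $p$ in direction $v_0$ meets $B'$. Foliating $B$ by the parallel complex lines $\ell_w$ of direction $v_0$, Fubini gives, for a.e.\ $w$, that $V|_{\ell_w}\in L^2$, $u|_{\ell_w}\in H^1$, and $|\bar\partial(u|_{\ell_w})|\le V|_{\ell_w}\,|u|_{\ell_w}|$; for a.e.\ $w$ with $\ell_w\cap B'\ne\emptyset$, $u|_{\ell_w}$ vanishes on the open set $\ell_w\cap B'$, so by the one-variable strong unique continuation just proved (applied at a point of $\ell_w\cap B'$) $u|_{\ell_w}\equiv0$ on the connected slice $\ell_w\cap B$. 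Hence $u=0$ a.e.\ on the open tube $(B'+\mathbb C v_0)\cap B$, a neighborhood of $p$, so $p\in E$; therefore $E=\Omega$ and $u\equiv0$. The one genuine obstacle in this scheme is the weighted Cauchy-integral inequality with integrability gain and constant uniform in $\tau$; everything else is a routine Carleman bootstrap or a Fubini slicing argument.
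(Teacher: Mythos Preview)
The paper does not itself prove Theorem~\ref{pz}; it is quoted from \cite{PZ}. What can be inferred about the method of \cite{PZ} from its use in Section~\ref{se6} (see \eqref{cg}--\eqref{whl} and the proof of Theorem~\ref{main7}) is that the key estimate is \emph{not} your $L^q\to L^{q^*}$ Carleman inequality, but rather the $V$-weighted $L^2$ bound for the Riesz potential,
\[
\|I_1 f\|_{L^2_V(\mathbb C)}\ \le\ C_0\,\|V\|_{L^2(\mathbb C)}\,\|f\|_{L^2_{V^{-1}}(\mathbb C)}.
\]
Fed into the Cauchy representation of $u_k/z^m$ (with $V$ replaced by a strictly positive perturbation of small $L^2$ norm) this yields
\[
\int_{\mathbb C}\frac{|u_k|^2}{|z|^{2m}}\,V\,dv\ \le\ \frac{C_0^2}{\pi^2}\,\|V\|_{L^2}^2\int_{\mathbb C}\frac{|\bar\partial u_k|^2}{|z|^{2m}\,V}\,dv,
\]
and $|\bar\partial u|\le V|u|$ turns the principal right-hand term into $\frac{C_0^2}{\pi^2}\|V\|_{L^2}^2\int|u_k|^2|z|^{-2m}V$, which is absorbed for $\|V\|_{L^2}$ locally small; the remaining bootstrap (inner cutoff near $z_0$, outer cutoff, $m\to\infty$) is exactly your scheme. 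The point is that this $V$-weighted $L^2$ inequality follows quickly from the weak-$L^2$ bound for $|z-\zeta|^{-1}$ and needs no uniform-in-$\tau$ control of resonant angular modes. You correctly flag that uniformity as the ``technical heart'' of your plan, and it is precisely the step \cite{PZ} circumvents; as written, your argument has its declared gap at that point, and the weighted-$L^2$ route is the cleaner substitute.

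Your Fubini slicing for the weak unique continuation in $\mathbb C^n$ is correct and is the natural reduction. One caution worth making explicit: ``$u|_{\ell_w}\in H^1$ and $V|_{\ell_w}\in L^2$ for a.e.\ $w$'' is a genuine (though standard) step, since restriction of $H^1_{loc}$ functions to complex lines is an a.e.\ statement rather than a pointwise one; likewise ``$u|_{\ell_w}=0$ a.e.\ on $\ell_w\cap B'$'' is only guaranteed for a.e.\ $w$ by Fubini. With those caveats your deduction that $E$ is closed, hence equal to $\Omega$, goes through.
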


 In particular, since all the potentials under consideration in this paper    belong to $L_{loc}^2$ away from the reference point $z_0$, their unique continuation properties can be reduced to demonstrating that solutions vanish near a neighborhood of  $z_0$, in view of the above weak unique continuation property.

The  complex  polar  coordinates change formula below  will  play a crucial rule throughout the rest of the paper. One can find the formula that was  used  in \cite[pp. 260]{Ho} without proof. For the convenience of the reader, we provide its proof below.  Let $S^d$ be the unit sphere in $\mathbb R^{d+1}$, and 
$D_{r_0}$ be the   disk centered at $0$ with radius $r_0$ in $\mathbb C$. Recall that $B_{r_0}$ is the   ball centered at $0$ with radius $r_0$ in $\mathbb C^n$.

\begin{lem}\label{cp}
    Let $u\in L^1(B_{r_0})$. Then  for  a.e. $\zeta\in S^{2n-1}$,   $|w|^{2n-2}u(w\zeta)$  as a function of $w\in D_{r_0}$ is in $   L^1(D_{r_0})$, with 
    $$\int_{|z|< r_0} u(z) dv_z = \frac{1}{2\pi} \int_{|\zeta| =1}  \int_{|w|< r_0}|w|^{2n-2}u(w\zeta)dv_wdS_\zeta.$$
\end{lem}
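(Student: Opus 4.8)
The statement is a change-of-variables identity, so the natural strategy is to verify it first for a dense, well-behaved class of functions and then extend by density/monotone convergence. First I would reduce to nonnegative $u$ (by splitting into real and imaginary parts and then positive and negative parts, or by applying the identity to $|u|$ to justify the a.e.\ finiteness claim), and work with $u \geq 0$ measurable on $B_{r_0}$; in that setting the integrals make unconditional sense in $[0,\infty]$ and I only need to establish the identity as an equality of (possibly infinite) quantities, after which finiteness of the right-hand side for $u \in L^1$ follows from finiteness of the left.

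\textbf{Key steps.} The computational core is the following. Write a point of $\mathbb{C}^n \setminus \{0\}$ as $z = w\zeta$ with $\zeta \in S^{2n-1} \subset \mathbb{C}^n$ and $w \in \mathbb{C}$, $0 < |w| < r_0$; note this parametrization is $2\pi$-to-one along the circle action $(w,\zeta) \mapsto (e^{i\theta}w, e^{-i\theta}\zeta)$, which accounts for the $\frac{1}{2\pi}$. The plan is to pass through ordinary real polar coordinates on both sides: on $\mathbb{C}^n$, $dv_z = \rho^{2n-1}\,d\rho\,dS_\sigma$ with $\rho = |z|$, $\sigma = z/|z| \in S^{2n-1}$; on $\mathbb{C} = \mathbb{R}^2$, $dv_w = s\,ds\,d\theta$ with $s = |w|$. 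Then for fixed $\zeta \in S^{2n-1}$,
\begin{equation*}
\int_{|w|<r_0} |w|^{2n-2} u(w\zeta)\,dv_w = \int_0^{r_0}\int_0^{2\pi} s^{2n-2} u(s e^{i\theta}\zeta)\, s\,d\theta\,ds = \int_0^{r_0} s^{2n-1}\!\!\int_0^{2\pi} u(s e^{i\theta}\zeta)\,d\theta\,ds.
\end{equation*}
Integrating over $\zeta \in S^{2n-1}$ and interchanging the order of integration (Tonelli, since $u \geq 0$),
\begin{equation*}
\frac{1}{2\pi}\int_{|\zeta|=1}\int_{|w|<r_0}|w|^{2n-2}u(w\zeta)\,dv_w\,dS_\zeta = \int_0^{r_0} s^{2n-1}\left(\frac{1}{2\pi}\int_{|\zeta|=1}\int_0^{2\pi} u(s e^{i\theta}\zeta)\,d\theta\,dS_\zeta\right)ds.
\end{equation*}
So it remains to identify the inner average: for each fixed $s$, the map $(\theta,\zeta)\mapsto e^{i\theta}\zeta$ pushes $\frac{1}{2\pi}\,d\theta\,dS_\zeta$ forward to the normalized surface measure $dS_\sigma$ on $S^{2n-1}$. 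This is exactly the rotation-invariance of $dS$ on the sphere under the circle subgroup $\{e^{i\theta} I\} \subset U(n) \subset O(2n)$: for every $g\in O(2n)$, $g_*(dS) = dS$, and averaging over $\theta$ of the rotated copies of $dS_\zeta$ reproduces $dS$ by Fubini plus invariance. Hence the inner bracket equals $\int_{|\sigma|=1} u(s\sigma)\,dS_\sigma$, and plugging back gives $\int_0^{r_0} s^{2n-1}\int_{|\sigma|=1}u(s\sigma)\,dS_\sigma\,ds = \int_{|z|<r_0} u(z)\,dv_z$, which is the claim. Finally, for general $u\in L^1(B_{r_0})$ apply the nonnegative case to $|u|$ to get that $w\mapsto |w|^{2n-2}u(w\zeta)\in L^1(D_{r_0})$ for a.e.\ $\zeta$ and that the iterated integral converges absolutely, then remove the sign restriction by linearity.

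\textbf{Main obstacle.} The only genuinely non-formal point is the measure-pushforward identity $\frac{1}{2\pi}\int_0^{2\pi}(e^{i\theta})_*(dS_\zeta)\,d\theta = dS_\sigma$ on $S^{2n-1}$, i.e.\ that integrating the $\theta$-orbit average against $dS$ recovers $dS$; everything else is Tonelli/Fubini and standard polar coordinates. I would justify this either by invoking uniqueness of $O(2n)$-invariant (equivalently, $U(n)$-invariant) probability measure on the sphere, or more concretely by testing against a continuous function $\varphi$ on $S^{2n-1}$: $\frac{1}{2\pi}\int_{S^{2n-1}}\int_0^{2\pi}\varphi(e^{i\theta}\zeta)\,d\theta\,dS_\zeta = \int_{S^{2n-1}}\left(\frac{1}{2\pi}\int_0^{2\pi}\varphi(e^{i\theta}\zeta)\,d\theta\right)dS_\zeta = \int_{S^{2n-1}}\varphi\,dS$, where the last equality uses rotation invariance of $dS_\zeta$ under each fixed $e^{i\theta}$ together with Fubini. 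A secondary (entirely routine) subtlety is the a.e.\ finiteness for $\zeta$, handled as indicated by first running the argument with $|u|$ in place of $u$.
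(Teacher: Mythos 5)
Your proposal is correct and uses essentially the same ingredients as the paper's proof: polar coordinates on $\mathbb{C}^n$ and on the disk, Fubini/Tonelli, and the invariance of the surface measure on $S^{2n-1}$ under the circle action $\zeta\mapsto e^{i\theta}\zeta$ (the paper inserts a trivial average over $S^1$ and runs the same computation from left to right, whereas you run it from right to left). Your extra care with the reduction to $u\ge 0$ and the a.e.\ integrability claim is a welcome refinement of details the paper leaves implicit, but it does not change the argument.
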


\begin{proof}First, by the standard polar coordinates change, 
   \begin{equation*}
       \begin{split}
     2\pi \int_{|z|<r_0} u(z) dv_z =&       2\pi \int_0^{r_0}r^{2n-1}\int_{|\xi|=1}u(r\xi) dS_\xi  dr \\
     =&        \int_0^{r_0}r^{2n-1}\int_{|\eta|=1}\int_{|\xi|=1}u(r\xi) dS_\xi dS_\eta dr.       
       \end{split}
   \end{equation*}
  Here  $dS_\eta$ and $dS_\xi$   are the Lebesgue measures over $ S^1$ and $S^{2n-1}$, respectively. Since for each  $\eta\in S^1$,  
   $$ \int_{|\xi|=1}u(r\xi) dS_\xi  = \int_{|\zeta|=1}u(r\zeta\eta) |\eta|dS_\zeta = \int_{|\zeta|=1}u(r\zeta\eta)dS_\zeta, $$
   we further have by Fubini's theorem, 
   \begin{equation*}
       \begin{split}
      2\pi \int_{|z|<r_0} u(z) dv_z =&   \int_0^{r_0}r^{2n-1}\int_{|\eta|=1}\int_{|\zeta|=1}u(r\zeta\eta) dS_\zeta dS_\eta dr   \\
      =&\int_{|\zeta|=1}\int_0^{r_0}r^{2n-1}\int_{|\eta|=1}u(r\zeta\eta) dS_\eta drdS_\zeta\\
      =&\int_{|\zeta|=1}\int_0^{r_0}r\int_{|\eta|=1}|r\eta|^{2n-2}u(r\zeta\eta) dS_\eta drdS_\zeta\\
      =&\int_{|\zeta| =1}  \int_{|w|< r_0}|w|^{2n-2}u(w\zeta)dv_wdS_\zeta. 
       \end{split}
   \end{equation*}
\end{proof}

     As seen below, Lemma \ref{cp} allows us to transform \eqref{eqn2} with $L^p_{loc}, p>2n$ potentials into new ones with $L^2_{loc}$ potentials along almost all  complex one-dimensional radial directions. On the other hand, when the solutions under consideration are smooth, the  flatness of these solutions  at a point   naturally extends to   their  restrictions along those radial directions. Thus one can completely convert the unique continuation property   in the higher source dimension case to that on the complex one  dimension, where Theorem \ref{pz} can be applied.

\begin{proof}[Proof of Theorem \ref{main3}: ] Without loss of generality, let $z_0=0$ and $r>0$ be small such that  $V\in L^p(B_{r})$.  For each fixed $\zeta\in S^{2n-1}$, let $\tilde V(w): = |w|^{\frac{2n-2}{p}}V(w\zeta)$ and $v(w): = u(w\zeta), w\in D_{r}$. Since all jets of $u$ vanish at $0$ by assumption, the same holds true for all jets of $v$. Thus $v$ vanishes to infinite order at $0$ in the $L^2$ sense by Lemma \ref{fn}. Moreover, $v$ satisfies
$$|\bar\partial v(w) |= |\zeta \cdot \bar\partial u(w\zeta)|\le V(w\zeta) |u(w\zeta)| = |w|^{-\frac{2n-2}{p}}\tilde V(w)|v(w)|,\ \ w\in D_{r}. $$

We claim that  $|w|^{-\frac{2n-2}{p}}\tilde V(w)\in L^2(D_{r})$  for a.e. $\zeta\in S^{2n-1}$. In fact,    according to Lemma \ref{cp}, 
$$\int_{|z|< r} |  V(z)|^p dv_z = \frac{1}{2\pi} \int_{|\zeta| =1}  \int_{|w|< r}|w|^{2n-2}|V(w\zeta)|^pdv_wdS_\zeta = \frac{1}{2\pi} \int_{|\zeta| =1}  \int_{|w|< r} |\tilde V(w)|^pdv_wdS_\zeta.$$
In particular,  $\tilde V\in L^p(D_{r})$ for a.e. $\zeta\in S^{2n-1}$. 
By H\"older's inequality
\begin{equation*}\begin{split}
        \int_{|w|< r } \left||w|^{-\frac{2n-2}{p}}\tilde V(w)\right|^2dv_w&\le  \left( \int_{|w|< r }  |\tilde V(w)|^pdv_w\right)^{\frac{2}{p}}  \left(\int_{|w|< r } |w|^{-\frac{2n-2}{p}\frac{2p}{p-2}}dv_w\right)^{\frac{p-2}{p}}\\
        &=\left( \int_{|w|< r }  |\tilde V(w)|^pdv_w\right)^{\frac{2}{p}}  \left(\int_{|w|< r } |w|^{-\frac{4n-4}{p-2}}dv_w\right)^{\frac{p-2}{p}}.
\end{split}
\end{equation*}
Since $p>2n$, we have $\frac{4n-4}{p-2}<2 $ and thus $ \int_{|w|< r } |w|^{-\frac{4n-4}{p-2}}dv_w<\infty$. This, combined with  the fact that $\tilde V\in L^p(D_{r})$, gives the desired claim. Hence we  can make use of  Theorem \ref{pz} part 2) to obtain $v = 0$ on $D_{r}$ for a.e. $\zeta\in S^{2n-1}$. Thus $u=0$ on $B_{r}$. The weak unique continuation property in Theorem \ref{pz} part 1) further applies to give $u\equiv 0$ on $\Omega$. 
\end{proof}

\section{Unique continuation for potentials involving $\frac{1}{|z|}$ for $N =1$    }\label{se5}
Let $\Omega$ be a domain in $\mathbb C^n$ containing $0$.   Let $u: \Omega\rightarrow \mathbb C^N$ be an  $H^1_{loc}(\Omega)$ solution to the  inequality
\begin{equation}\label{z-1}
    |\bar\partial u|\le \frac{C}{|z|}|u|\ \ \text{a.e. on}\ \ \Omega, 
\end{equation}  
where $C$ is some positive constant. Note that the potential $\frac{C}{|z|}\notin L^{2n}_{loc}(\Omega)$. 
The goal of this section is to show  the unique continuation property    for \eqref{z-1} if the target dimension $N=1$ as stated below.

\begin{theorem}\label{33}
Let $\Omega$ be a   domain in $\mathbb C^n$ and $0\in \Omega$. Let $u: \Omega\rightarrow \mathbb C$ with $u\in  H^1_{loc}(\Omega)$, and satisfies $|\bar\partial u|\le \frac{C}{|z|}|u|$ a.e. on $\Omega$ for some constant $C>0$. If $u$ vanishes to infinite order  in the $L^2$ sense at $0$, then $u $ vanishes identically. 
\end{theorem}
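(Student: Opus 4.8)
The plan is to reduce the assertion first to a neighbourhood of $0$, then to the one complex variable situation, and finally to settle that base case by a Cauchy-integral iteration that makes genuine use of the infinite-order vanishing. \emph{Reduction to a neighbourhood of $0$.} Away from the origin the potential $\frac{C}{|z|}$ is locally bounded, hence lies in $L^2_{loc}(\Omega\setminus\{0\})$; since removing a point from a domain in $\mathbb R^{2n}$ (with $2n\ge 2$) leaves it connected, the weak unique continuation property in Theorem \ref{pz}(1), applied on $\Omega\setminus\{0\}$, shows it suffices to prove that $u\equiv 0$ on some ball $B_{r_0}\subset\Omega$ about $0$.

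\emph{Reduction to $n=1$.} For $n\ge 2$ fix $B_{r_0}\subset\subset\Omega$ and, for $\zeta\in S^{2n-1}$, look at the slice $v_\zeta(w):=u(w\zeta)$, $w\in D_{r_0}$. Restricting $u$ along the single real radial direction fails because the radial derivative of $u$ involves $\partial u$, which is uncontrolled; restricting along the complex line $\mathbb C\zeta$ works, since by the holomorphic chain rule $\partial_{\bar w}v_\zeta=\sum_j\bar\zeta_j(\partial_{\bar z_j}u)(w\zeta)$, so $|\bar\partial v_\zeta(w)|\le|\bar\partial u(w\zeta)|\le\frac{C}{|w|}|v_\zeta(w)|$ on $D_{r_0}$. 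Applying Lemma \ref{cp} to $|u|^2$ and to $|\nabla u|^2$ gives $\int_{D_{r_0}}|w|^{2n-2}\big(|v_\zeta|^2+|\nabla v_\zeta|^2\big)\,dv_w<\infty$ for a.e.\ $\zeta$, and applying it to $|u|^2$ over shrinking balls, together with a dyadic splitting that absorbs the weight $|w|^{2n-2}$, shows that for a.e.\ $\zeta$ the slice $v_\zeta$ vanishes to infinite order in the $L^2$ sense at $0$. That flatness in turn forces $\frac{v_\zeta}{|w|}\in L^2_{loc}(D_{r_0})$, hence $\bar\partial v_\zeta\in L^2_{loc}(D_{r_0})$, and then Lemma \ref{el} (with $p=2$) upgrades $v_\zeta$ to $H^1_{loc}(D_{r_0})$ \emph{including near $0$} — the delicate point, since an $H^1(\mathbb R^{2n})$ function need not restrict to $H^1$ on a complex line. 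So it suffices to handle $n=N=1$; then $v_\zeta\equiv 0$ for a.e.\ $\zeta$ yields $u\equiv 0$ on $B_{r_0}$.

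\emph{The base case $n=N=1$.} Now $v\in H^1_{loc}(D_{r_0})$ is flat at $0$ and $|\bar\partial v|\le\frac{C}{|w|}|v|$. On a small disk $D_\rho$ the Cauchy--Pompeiu formula writes $v=h+T(\bar\partial v\cdot\mathbf 1_{D_\rho})$ with $h$ holomorphic on $D_\rho$; feeding the weighted estimate of the Cauchy integral from \cite{PZ} with $|\bar\partial v|\le\frac{C}{|w|}|v|$ and the infinite-order vanishing of $v$ shows the Cauchy-transform term is itself flat at $0$, hence $h$ is a holomorphic function flat at $0$ and so $h\equiv 0$. Therefore $|v(w)|\le\frac{C}{2\pi}\int_{D_\rho}\frac{|v(\zeta)|}{|\zeta|\,|\zeta-w|}\,dv_\zeta$ for a.e.\ $w$, and I would iterate this inequality: the required contraction comes not from shrinking $\rho$ (the kernel $\frac{1}{|\zeta|\,|\zeta-w|}$ is scale invariant on disks) but from the arbitrarily high vanishing order of $v$, forcing $v\equiv 0$ on $D_\rho$ and then, by Theorem \ref{pz}(1), on all of $D_{r_0}$.

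The hard part is the base case: $\frac{1}{|z|}$ is exactly scaling critical for $\bar\partial$, so neither localization nor a soft perturbation argument produces the absorption of the potential that one needs; the qualitative infinite-order flatness must be converted into a quantitative gain, and this is precisely what the weighted Cauchy-integral estimate of \cite{PZ} (equivalently, a Carleman inequality for $\bar\partial$ with weights $|w|^{-\tau}$, $\tau\to\infty$) supplies. The secondary technical obstacle is producing honestly $H^1_{loc}$ slices up to the origin in the reduction step, which is where the flatness hypothesis gets used a second time.
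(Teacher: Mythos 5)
Your reduction steps are broadly in the right spirit, but the base case $n=N=1$ — which you correctly identify as the heart of the matter — does not work as proposed. In the Cauchy--Pompeiu splitting $v=h+T\bigl(\bar\partial v\,\mathbf 1_{D_\rho}\bigr)$, the claim that the Cauchy-transform term is flat at $0$ is false: the Cauchy transform is nonlocal, so even though $\bar\partial v$ is flat at $0$ in $L^2$, the contribution of $\zeta$ bounded away from $0$ is holomorphic near $0$ and in general nonzero there (take any datum supported in an annulus: it is trivially flat at $0$, yet its transform does not vanish at $0$). Hence $h$ need not be flat, you cannot conclude $h\equiv 0$, and the integral inequality you propose to iterate is not established. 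Even granting it, the iteration supplies no contraction for an arbitrary constant $C>0$: the kernel $\frac{1}{|\zeta||\zeta-w|}$ is scale invariant and qualitative flatness gives no absorption (note $v=|z|^{2C}$ satisfies $|\bar\partial v|=\frac{C}{|z|}|v|$ with vanishing order exactly $2C$, so every finite order occurs). Most tellingly, nothing in your base-case argument uses the scalar target: Cauchy--Pompeiu, flatness and the iteration are statements about $|v|$ only, so if the argument worked it would prove unique continuation for $N\ge 2$ and arbitrary $C$, contradicting Example \ref{ex2} (Pan--Wolff). Any correct proof must exploit $N=1$ multiplicatively, which is what the paper does: it sets $V=\bar\partial u/u$ off the zero set, solves $\bar\partial f=V$ with only a logarithmic singularity at $0$ (Lemma \ref{ele1}), so that $|e^{-f}|\le C|z|^{-M}$, shows $h=ue^{-f}$ is holomorphic off the puncture (Rado's theorem), and then uses Lemma \ref{22} and the Laurent-expansion Lemma \ref{11} to conclude $h\equiv 0$; this is Theorem \ref{3333}/\ref{333}.

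On the reduction: passing to a punctured neighborhood via Theorem \ref{pz}(1) is fine, and the slicing with Lemma \ref{cp} plus a dyadic absorption of the weight $|w|^{2n-2}$ can indeed produce flat slices for a.e.\ $\zeta$. But two technical points you gloss over must be supplied: a Fubini-in-polar-coordinates argument showing that the weak derivatives of an $H^1_{loc}$ function and the a.e.\ differential inequality restrict correctly to a.e.\ complex line through $0$; and, before invoking Lemma \ref{el} near the origin, a removable-singularity step (Harvey--Polking, a point having zero length) showing that $\bar\partial v_\zeta$ equals the restricted datum distributionally across $0$, not merely on $D_{r_0}\setminus\{0\}$. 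It is also worth noting that for $H^1_{loc}$ solutions the paper does not slice at all when $n\ge 2$: it works directly in $\mathbb C^n$, using Theorem \ref{gr1} (Gong--Rosay) to show the zero set is analytic, Demailly's removable-singularity lemma to extend the $\bar\partial$-closedness of $V=\bar\partial u/u$ across the zero set and the origin, and the Bochner--Martinelli construction of Lemma \ref{el} together with Lemma \ref{ele1}; slicing is reserved for smooth solutions (Theorem \ref{main4}), where flatness of restrictions is immediate. The decisive gap, however, is the base case described above.
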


To prove Theorem \ref{33},  we need a few preparation lemmas. 

\begin{lem}\label{22}
Let $u\in L^2$ near $0\in \mathbb C^n$, and vanishes to infinite order in the $L^2$ sense at $0$. Then for each $M>0$, $\frac{u}{|z|^M} \in L^2$ near $0$, and vanishes to infinite order in the $L^2$ sense at $0$. 
\end{lem}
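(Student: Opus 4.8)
The plan is to reduce everything to a dyadic decomposition of a punctured neighborhood of $0$ and exploit the quantitative meaning of infinite-order vanishing. Write $A_j := \{2^{-j-1} \le |z| < 2^{-j}\}$ for $j$ large, so that on $A_j$ one has $|z|^{-M} \le 2^{(j+1)M} = C_M 2^{jM}$. Then
\begin{equation*}
  \int_{|z|<2^{-j_0}} \frac{|u(z)|^2}{|z|^{2M}}\, dv_z \;=\; \sum_{j\ge j_0}\int_{A_j}\frac{|u|^2}{|z|^{2M}}\, dv_z \;\le\; C_M^2 \sum_{j\ge j_0} 2^{2jM}\int_{A_j}|u|^2\, dv_z.
\end{equation*}
Now use the hypothesis that $u$ vanishes to infinite order in the $L^2$ sense at $0$: for every $m\ge 1$ there is $r_m>0$ with $\int_{|z|<r}|u|^2\,dv_z \le r^m$ for all $r<r_m$. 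Choosing $m = 2M+3$ (any exponent strictly larger than $2M+2$ works), we get $\int_{A_j}|u|^2\,dv_z \le \int_{|z|<2^{-j}}|u|^2\,dv_z \le 2^{-j(2M+3)}$ for $j$ large, hence the $j$-th term of the series is bounded by $C_M^2\, 2^{2jM}2^{-j(2M+3)} = C_M^2\, 2^{-3j}$, and the series converges. This shows $\frac{u}{|z|^M}\in L^2$ near $0$.

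For the infinite-order vanishing of $\frac{u}{|z|^M}$ at $0$, fix $m\ge 1$; I must show $r^{-m}\int_{|z|<r}|u|^2|z|^{-2M}\,dv_z\to 0$ as $r\to 0$. Run the same dyadic estimate but only over the annuli inside $\{|z|<r\}$: for $2^{-j_0-1}\le r < 2^{-j_0}$,
\begin{equation*}
  \int_{|z|<r}\frac{|u|^2}{|z|^{2M}}\, dv_z \;\le\; C_M^2\sum_{j\ge j_0} 2^{2jM}\int_{|z|<2^{-j}}|u|^2\, dv_z \;\le\; C_M^2\sum_{j\ge j_0} 2^{2jM}\,2^{-j(2M+m+2)} \;=\; C_M^2\sum_{j\ge j_0} 2^{-j(m+2)},
\end{equation*}
where in the middle inequality I applied the infinite-order vanishing of $u$ with exponent $2M+m+2$ (valid for $j_0$ large, i.e. $r$ small). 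The geometric series sums to a constant times $2^{-j_0(m+2)}\le C\, r^{m+2}$, so $r^{-m}\int_{|z|<r}|u|^2|z|^{-2M}\,dv_z \le C\, r^{2}\to 0$. This gives the claim.

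I do not anticipate a genuine obstacle here; the lemma is essentially bookkeeping with the definition of flatness, and the only care needed is to choose the auxiliary vanishing order $m$ large enough relative to $M$ (and the target order) so that each dyadic geometric series converges with room to spare. One small point worth stating cleanly is that the monotone-convergence / Tonelli interchange of the sum over annuli with the integral is legitimate because all integrands are nonnegative, which also justifies writing the integral over the punctured ball as the sum over $A_j$ (the single point $0$ has measure zero). Everything else is Hölder-free elementary estimation.
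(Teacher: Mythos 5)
Your proposal is correct and follows essentially the same route as the paper: a dyadic decomposition into annuli, on each of which $|z|^{-2M}$ is bounded by a power of $2$, combined with the flatness hypothesis applied at an order shifted by $2M$. The only cosmetic difference is that the paper decomposes $\{|z|<r\}$ into the $r$-scaled annuli $\{r2^{-j}<|z|<r2^{-j+1}\}$ and invokes flatness at order $m+2M$ with an arbitrary $\epsilon$ to conclude directly, whereas you use fixed dyadic annuli and order $2M+m+2$ to gain the extra factor $r^{2}$; both arguments are complete.
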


\begin{proof}
For each $m\ge 1$ and $\epsilon>0$, by  the $L^2$ flatness of $u$ at $0$, there exists $\delta >0$ such that for $0<r\le \delta$, 
\begin{equation*}
    \int_{|z|<r}|u|^2dv_{z}\le \epsilon r^{m+2M}.
\end{equation*}
Then for $0<r\le \delta$,
\begin{equation*}
    \begin{split}
      \int_{|z|<r}\frac{|u|^2}{|z|^{2M}}dv_{z} =&     \sum_{j=1}^\infty \int_{\frac{r}{2^j}<|z|<\frac{r}{2^{j-1}}}\frac{|u|^2}{|z|^{2M}}dv_{z} 
      \le   \sum_{j=1}^\infty \frac{2^{2Mj}}{r^{2M}}\int_{\frac{r}{2^j}<|z|<\frac{r}{2^{j-1}}}|u|^2dv_{z}\\
      \le &\sum_{j=1}^\infty \frac{2^{2Mj}}{r^{2M}}\int_{|z|<\frac{r}{2^{j-1}}}|u|^2dv_{z} 
      \le  \epsilon \sum_{j=1}^\infty \frac{2^{2Mj}}{r^{2M}}\frac{r^{m+2M}}{2^{(m+2M)(j-1)}}\\
      =&\epsilon2^{2M}r^m\sum_{j=1}^\infty 2^{-m(j-1)}\le \epsilon2^{2M+1} r^m.
    \end{split}
\end{equation*}
In particular, $\frac{u}{|z|^M}\in L^2$ near $0$, and vanishes to infinite order in the $L^2$ sense at $0$.
\end{proof}
\medskip

\begin{lem}\label{11}
Let $\Omega$ be a   domain in $\mathbb C^n$ and $0\in \Omega$.  Assume that  $u\in L_{loc}^2(\Omega)$ and is holomorphic in $\Omega\setminus \{0\}$.  If   $u$ vanishes to infinite order  in the $L^2$ sense at $0$, then $u\equiv 0$. 
\end{lem}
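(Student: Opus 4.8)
\textbf{Proof plan for Lemma \ref{11}.}
The goal is to show that an $L^2_{loc}$ function $u$ on $\Omega$ that is holomorphic on $\Omega\setminus\{0\}$ and $L^2$-flat at $0$ must vanish identically. The plan is first to remove the singularity at $0$: since $u$ is holomorphic on the punctured domain and lies in $L^2_{loc}$, and $\{0\}$ has zero $(2n-1)$-dimensional Hausdorff measure (indeed is a single point, which is trivially of zero capacity for this purpose), a standard removable singularity theorem for holomorphic functions — the $L^2$ version, or even the bounded-codimension version in several variables where isolated singularities are automatically removable when $n\ge 2$, and the planar $L^1$/$L^2$ Riemann removability when $n=1$ — shows that $u$ extends to a genuine holomorphic function on all of $\Omega$. (If one wants uniformity with the rest of the paper, one can instead invoke the Harvey–Polking removable singularity result already cited in \cite{HP}, applied to $\bar\partial u=0$: the point $\{0\}$ is removable for the $\bar\partial$ equation with $L^2$ right-hand side.)

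Once $u$ is known to be holomorphic in a full neighborhood of $0$, it is in particular real-analytic there. By Lemma \ref{fn}, a real-analytic function near $0$ is either identically zero near $0$ or vanishes to only a finite order in the $L^2$ sense at $0$. Since $u$ is assumed $L^2$-flat at $0$, the second alternative is excluded, so $u\equiv 0$ on a neighborhood of $0$. Finally, by the identity theorem for holomorphic functions on the connected (or at least, connected near $0$ and then propagating componentwise) domain $\Omega$, vanishing on an open subset forces $u\equiv 0$ on $\Omega$. If $\Omega$ is not assumed connected, the conclusion $u\equiv 0$ still holds on the component containing $0$, and the statement should be read accordingly; alternatively one applies the weak unique continuation already recorded in Theorem \ref{pz} part 1).

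The only genuine step requiring care is the removability of the singularity at $0$: in one complex variable this is exactly Riemann's removable singularity theorem combined with the observation that an $L^2$ (hence $L^1_{loc}$) holomorphic function on a punctured disk cannot have a pole or essential singularity, so the extension is holomorphic; in several variables an isolated singularity of a holomorphic function is automatically removable (Riemann/Hartogs), so one does not even need the $L^2$ hypothesis for $n\ge 2$, though it is harmless to keep it. After removability, everything is routine: real-analyticity, Lemma \ref{fn}, and the identity principle finish the argument with no further estimates.
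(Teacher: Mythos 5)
Your proof is correct and takes essentially the same route as the paper: show the singularity at $0$ is removable and then conclude via Lemma \ref{fn} and the identity theorem, the only cosmetic difference being that the paper carries out the removability step by the explicit Laurent--coefficient computation (which for $n=1$ is precisely the $L^2$ Riemann-type removability you cite, and which subsumes the $n\ge 2$ Hartogs case). One small caution: in the planar case it is the $L^2$ hypothesis, not $L^1_{loc}$, that rules out negative Laurent terms (e.g.\ $1/z\in L^1_{loc}$ but $1/z\notin L^2$ near $0$), so the parenthetical ``(hence $L^1_{loc}$)'' should not be read as if the weaker integrability sufficed.
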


\begin{proof}
Write  $u$ in terms of the Laurent expansion  $ u(z) =\sum_{\alpha\in \mathbb Z^n} a_\alpha z^\alpha$ near  $0$  for some constants $a_\alpha$, $\alpha =(\alpha_1, \ldots, \alpha_n) \in \mathbb Z^n$. 
Then for each $0<r<<1$,
 $$\int_{|z|<r}|u|^2 dv_{z} = \int_{|z|<r} \sum_{\alpha, \beta\in \mathbb Z^n} a_\alpha\overline {a}_\beta z^\alpha\overline {z}^\beta dv_{z} =    \sum_{\alpha\in \mathbb Z^n} |a_\alpha|^2 \int_{|z|<r}  |z_1|^{2\alpha_1}\cdots |z_n|^{2\alpha_n} dv_z.  $$
The $L^2$ integrability of $u$ near $0$ leads to $a_\alpha=0$ for any $\alpha =(\alpha_1, \ldots, \alpha_n)$ with some $\alpha_j<0$. Thus $u$ is holomorphic on $\Omega$. By Lemma \ref{fn}, we further see $u\equiv 0$.
\end{proof}
 \medskip


\begin{lem}\label{ele1} Let $\Omega$ be a bounded domain in $\mathbb C^n$ and  $0\in \Omega$. For any $\alpha, \beta>0$ with $\alpha +\beta =2n$,   there exists some constant $C>0$ such that 
$$\int_{\Omega}\frac{dv_{\zeta}}{|\zeta|^\alpha|\zeta-z|^\beta} \le C\left(1+\left|\ln |z|\right|\right), \ \ z\in \Omega.$$
\end{lem}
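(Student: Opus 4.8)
The plan is to estimate the integral $\int_\Omega \frac{dv_\zeta}{|\zeta|^\alpha |\zeta-z|^\beta}$ by splitting the domain of integration into three pieces, governed by the competition between the two singularities at $0$ and at $z$. Fix $z\in\Omega$ and set $\rho := |z|$; since $\Omega$ is bounded, enclose it in a ball $B_R$ and estimate over $B_R$ instead (the extra region only adds a bounded contribution because away from $0$ and $z$ the integrand is bounded by a constant times $1$ on a set of finite measure). Decompose $B_R = A_1 \cup A_2 \cup A_3$, where $A_1 = \{|\zeta| < \rho/2\}$ is the region near the origin, $A_2 = \{|\zeta - z| < \rho/2\}$ is the region near $z$, and $A_3 = B_R \setminus (A_1 \cup A_2)$ is the bulk.

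On $A_1$ we have $|\zeta - z| \geq |z| - |\zeta| > \rho/2$, so the integrand is bounded by $C \rho^{-\beta} |\zeta|^{-\alpha}$; integrating $|\zeta|^{-\alpha}$ over $\{|\zeta|<\rho/2\}$ in $\mathbb C^n = \mathbb R^{2n}$ gives (using polar coordinates) a quantity comparable to $\rho^{2n-\alpha} = \rho^\beta$ since $\alpha < 2n$, and so $A_1$ contributes $O(1)$. Symmetrically, on $A_2$ we have $|\zeta| \geq |z| - |\zeta - z| > \rho/2$, the integrand is bounded by $C\rho^{-\alpha}|\zeta - z|^{-\beta}$, and integrating $|\zeta-z|^{-\beta}$ over $\{|\zeta - z| < \rho/2\}$ gives a quantity comparable to $\rho^{2n-\beta} = \rho^\alpha$ since $\beta < 2n$; thus $A_2$ also contributes $O(1)$. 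The logarithm enters only from $A_3$: there both $|\zeta| \geq \rho/2$ and $|\zeta - z| \geq \rho/2$, and I would further split $A_3$ according to whether $|\zeta|$ is comparable to $|\zeta - z|$. The cleanest route is to note that on the annular region $\{\rho/2 \leq |\zeta| \leq 2\rho\}$ one has $|\zeta - z| \geq \rho/2$ too (outside $A_2$), and the integral of $|\zeta|^{-\alpha}|\zeta - z|^{-\beta} \leq C\rho^{-\beta}|\zeta|^{-\alpha}$ over that annulus is again $O(1)$; while on the far region $\{|\zeta| > 2\rho\}$ one has $|\zeta - z| \geq |\zeta| - \rho \geq |\zeta|/2$, so the integrand is bounded by $C|\zeta|^{-\alpha-\beta} = C|\zeta|^{-2n}$, and $\int_{2\rho < |\zeta| < R} |\zeta|^{-2n}\, dv_\zeta$ is comparable to $\int_{2\rho}^R t^{-1}\, dt = \ln(R/2\rho)$, which is exactly the source of the $1 + |\ln|z||$ term.

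Assembling the three (really four) pieces yields the bound $C(1 + |\ln|z||)$ with $C$ depending only on $n$, $\alpha$, $\beta$, and the diameter of $\Omega$ (through $R$). The only mild subtlety — hardly an obstacle — is bookkeeping the constant $R$ and checking that the ``bulk away from both singularities'' genuinely contributes a bounded amount; this is immediate since the integrand is continuous and bounded there on a set of finite measure. I expect no real difficulty; the argument is a standard two-center singular integral estimate, and the $\alpha + \beta = 2n$ borderline is precisely what produces the logarithm rather than a power of $|z|$.
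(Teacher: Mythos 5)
Your proposal is correct and follows essentially the same route as the paper: isolate the region far from both singularities, where $|\zeta-z|\gtrsim|\zeta|$ makes the integrand comparable to $|\zeta|^{-2n}$ and produces the $1+|\ln|z||$ term, and check that the region at scale $|z|$ contributes $O(1)$ because $\alpha,\beta<2n$. The only cosmetic differences are that the paper handles the whole ball $\{|\zeta|<2|z|\}$ at once by the rescaling $\zeta=|z|\eta$ (exploiting $\alpha+\beta=2n$) and then splits near $0$ and near $z_0$, whereas you power-count the near-$0$, near-$z$, and annulus pieces directly, and that your parenthetical about the region $B_R\setminus\Omega$ is unnecessary: since the integrand is nonnegative, $\int_\Omega\le\int_{B_R}$ immediately.
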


\begin{proof}
 Fix $z\in \Omega$ and let $t: = |z|$. Let $r_0>0$ be large such that $\Omega\subset B_{r_0}$. For all $\zeta\in \Omega\setminus B_{2t}$, $|\zeta-z|\ge  |\zeta| -t \ge  |\zeta| -\frac{1}{2}|\zeta| = \frac{1}{2}|\zeta|$. Hence
    \begin{equation*}
        \begin{split}
            \int_{\Omega\setminus B_{2t} } \frac{dv_{\zeta}}{|\zeta|^\alpha|\zeta-z|^\beta}\le 2^\beta\int_{\Omega\setminus B_{2t} } \frac{dv_{\zeta}}{|\zeta|^{\alpha+\beta}}\le     2^\beta \left(\int_{2t}^{r_0} \frac{dr}{ r  }\right)w_{2n-1}   \le C_1\left(1+\left|\ln t\right|\right) 
        \end{split}
    \end{equation*}
 for some constant $C_1>0$ independent of $z$.   On the other hand,  writing $z=tz_0$ with $z_0\in S^{2n-1}$ and applying a change of coordinates $\zeta = t\eta $, we have
    \begin{equation*}
        \begin{split}
             \int_{  B_{2t} } \frac{dv_{\zeta}}{|\zeta|^\alpha|\zeta-z|^\beta}=&   \frac{1}{t^{\alpha+\beta-2n}}\int_{  B_{2} } \frac{dv_{\eta}}{|\eta|^\alpha|\eta-z_0|^\beta} = \int_{  B_{2} } \frac{dv_{\eta}}{|\eta|^\alpha|\eta-z_0|^\beta} \\
             =& \int_{  B_{\frac{1}{2}} } \frac{dv_{\eta}}{|\eta|^\alpha|\eta-z_0|^\beta} +   \int_{  B_{2}\setminus B_{\frac{1}{2}} } \frac{dv_{\eta}}{|\eta|^\alpha|\eta-z_0|^\beta}\\
             \le & 2^\beta\int_{  B_{\frac{1}{2}} } \frac{dv_{\eta}}{|\eta|^\alpha} +  2^\alpha \int_{  B_{2}\setminus B_{\frac{1}{2}} } \frac{dv_{\eta}}{|\eta-z_0|^\beta} \le C_2
        \end{split}
    \end{equation*}
 for some constant $C_2>0$ independent of $z$.   Altogether, we get the desired inequality.
\end{proof}
\medskip

Let us begin with the  case when the source dimension $n=1$. In fact, we shall prove the unique continuation for a   larger class of  potentials,  which take  on a hybrid form involving  both   powers  of $\frac{1}{|z|}$ and  Lebesgue integrable functions. Note that none of these potentials  below  belongs to $L^2_{loc}$.

\begin{theorem}\label{333}
Let $\Omega$ be a   domain in $\mathbb C$ containing $0$ and $1 < \beta < \infty$. Suppose $u: \Omega\rightarrow \mathbb C$ with $u\in  H^1_{loc}(\Omega)$, and satisfies 
$$|\bar\partial u|\le \frac{ \tilde V}{|z|^\frac{\beta-1}{\beta}}|u|\ \ \  \text{a.e.  on}\ \ \Omega   $$
 for some $\tilde  V\in L^{2\beta}_{loc}(\Omega)$. If $u$ vanishes to infinite order in the $L^2$ sense at $0$, then $u $ vanishes identically. 
\end{theorem}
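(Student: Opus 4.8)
The strategy is to localize near $0$, invoke the weak unique continuation property of Theorem \ref{pz} to reduce matters to showing that $u$ vanishes on a neighborhood of $0$, and then run a Cauchy-integral argument in which the infinite-order vanishing of $u$ absorbs the mild non-integrability of the potential. Fix $r_0>0$ with $\overline{B_{r_0}}\subset\Omega$. On $\Omega\setminus B_{r_0/2}$ the weight $|z|^{-\frac{\beta-1}{\beta}}$ is bounded, so there the differential inequality has an $L^2_{loc}$ potential; hence by Theorem \ref{pz} part 1) it suffices to prove $u\equiv 0$ on $B_\rho$ for some $0<\rho\le r_0$. Write $g:=\partial u/\partial\bar z$ on $B_{r_0}$, so that $|g|\le \tilde V\,|z|^{-\frac{\beta-1}{\beta}}\,|u|$ a.e.

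First I would represent $u$ on $B_{r_0}$ through the Cauchy transform, $u=h+Tg$, where $Tg(z)=\frac1\pi\int_{B_{r_0}}\frac{g(\zeta)}{z-\zeta}\,dv_\zeta$ satisfies $\partial (Tg)/\partial\bar z=g$ and $h$ is holomorphic on $B_{r_0}$. The key step is to show that $Tg$ vanishes to infinite order in the $L^2$ sense at $0$. For this I would combine: (i) Lemma \ref{22}, which upgrades the $L^2$-flatness of $u$ to the statement that $|z|^{-M}u\in L^2(B_{r_0})$ and is flat at $0$ for every $M>0$; (ii) H\"older's inequality with $\tilde V\in L^{2\beta}$, which bounds suitable weighted $L^2$-norms of $g$ by weighted norms of $u$; and (iii) the kernel estimate of Lemma \ref{ele1} (equivalently, the weighted Cauchy estimate of \cite{PZ}), which controls weighted $L^2$-norms of $Tg$ by those of $g$ at the expense of only a logarithmic loss — a loss that the strict inequality $\beta>1$ is exactly strong enough to defeat, since the weight $|z|^{-\frac{\beta-1}{\beta}}$ is strictly better than $|z|^{-1}$. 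Granting this, $h=u-Tg$ is a holomorphic function on $B_{r_0}$ vanishing to infinite order at $0$, so $h\equiv 0$ by Lemma \ref{fn} (or Lemma \ref{11}), and therefore $u=Tg$ on $B_{r_0}$.

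Finally I would iterate on shrinking balls. From $u=Tg$, the bound $|g|\le \tilde V|z|^{-\frac{\beta-1}{\beta}}|u|$, and the same weighted estimates now tracked in the radius, one gets on small balls an inequality of the form $\int_{B_r}|u|^2\le C\,\omega(r)\int_{B_{2r}}|u|^2$ with $\omega(r)\to 0$ as $r\to0$, the smallness coming from $\|\tilde V\|_{L^{2\beta}(B_{2r})}\to0$, from the decay of $\||z|^{-\frac{\beta-1}{\beta}}\|_{L^{s}(B_{2r})}$ for admissible $s<\frac{2\beta}{\beta-1}$, and from the scaling of $T$. Together with the $L^2$-flatness of $u$, this forces $\int_{B_r}|u|^2=0$ for all small $r$, i.e. $u\equiv0$ near $0$; Theorem \ref{pz} part 1) then propagates the vanishing to all of $\Omega$. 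I expect the main obstacle to be precisely the middle step: establishing the weighted Cauchy estimate at the critical weight and checking that the logarithmic loss inherent in the Cauchy kernel (visible in Lemma \ref{ele1}) is beaten by the extra integrability afforded by $\beta>1$; everything else is bookkeeping with H\"older's inequality and dyadic decompositions around $0$.
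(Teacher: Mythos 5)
There is a genuine gap at the heart of your plan: the claim that $Tg$ vanishes to infinite order at $0$ cannot be derived from Lemma \ref{22}, H\"older, and kernel estimates, because the Cauchy transform is nonlocal. Those tools do show that $|z|^{-M}\bar\partial u$ has flat (hence small) norms on small balls, but $Tg(z)=\frac1\pi\int_{B_{r_0}}\frac{g(\zeta)}{z-\zeta}\,dv_\zeta$ at points $z$ near $0$ receives an $O(1)$ contribution from $g$ on the region $|\zeta|\sim r_0$, where nothing is small; even after truncating, $T(g\chi_{B_\rho})(0)=-\frac1\pi\int_{B_\rho}\frac{g(\zeta)}{\zeta}\,dv_\zeta$ has no reason to vanish at all, let alone to infinite order. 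Flatness of $u=h+Tg$ only says that $h$ cancels these contributions; asserting that $Tg$ alone is flat is equivalent to asserting that $h$ is flat, which is exactly what you are trying to prove, so the step is circular rather than a consequence of the cited estimates. The closing iteration is also inconclusive as stated: an inequality $\int_{B_r}|u|^2\le C\,\omega(r)\int_{B_{2r}}|u|^2$ with $\omega(r)\to0$ and shrinking balls on both sides only reproduces decay of $\int_{B_r}|u|^2$, i.e.\ flatness, which you already have; nonzero flat functions satisfy precisely such estimates, so it never forces $\int_{B_r}|u|^2=0$. To force vanishing one needs an absorption on a fixed ball, or a weight $|z|^{-2m}$ with $m\to\infty$ whose blow-up is paid only on a fixed outer annulus, as in the proofs of Theorems \ref{mm1} and \ref{main7}.

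The paper's proof uses the Cauchy transform multiplicatively rather than additively, and this is how the log-versus-$\beta>1$ tension you anticipate is actually resolved. Off the zero set $S$ of $u$ one sets $V:=\bar\partial u/u$, so $|V|\le\tilde V|z|^{-\frac{\beta-1}{\beta}}\in L^q$ for all $q<2$, and lets $f$ be the Cauchy transform of $V$, so $\bar\partial f=V$. H\"older with $\tilde V\in L^{2\beta}$ combined with Lemma \ref{ele1} gives $|f(z)|\le M\left(1+\left|\ln|z|\right|\right)$ (this is where the exponent $\frac{\beta-1}{\beta}<1$ is used), hence $|e^{-f}|\le C|z|^{-M}$ grows at most polynomially. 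Then $h:=ue^{-f}$ is $\bar\partial$-closed off $S\cup\{0\}$, holomorphic on the punctured disc by Rado's theorem, belongs to $L^2$ and is flat at $0$ by Lemma \ref{22} (which absorbs the polynomial blow-up of $e^{-f}$), and therefore vanishes by Lemma \ref{11}; thus $u=e^{f}h\equiv0$. If you wish to salvage your framework, the repair is essentially to apply the Cauchy transform to $\bar\partial u/u$ instead of to $\bar\partial u$ and exponentiate, i.e.\ to reproduce this factorization; the ingredients you list (Lemmas \ref{22} and \ref{ele1}, H\"older, Theorem \ref{pz}) are the right ones, but they must act on $e^{\pm f}$ rather than on $Tg$.
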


\begin{proof} 
     Firstly,  $u \in   L^p_{loc}(\Omega)$ for all $p<\infty$ by  Sobolev embedding theorem. Since  $\tilde V\in L^{2\beta}_{loc}(\Omega)$ with $2\beta>2$, we have $ \tilde  V |z|^\frac{1-\beta}{\beta}u\in L_{loc}^{p_0}(\Omega\setminus \{0\})$ for some $p_0>2$ by H\"older's inequality. Thus by the ellipticity Lemma \ref{el}, $u\in W^{1, p_0}_{loc}(\Omega\setminus \{0\})\subset C^0(\Omega\setminus \{0\})$, the space of    continuous functions on $\Omega\setminus \{0\}  $, as a consequence of Sobolev embedding theorem.

For any subdomain $\tilde \Omega\subset \subset \Omega$ containing $0$, set $S: =\{z\in \tilde \Omega\setminus \{0\}: u(z) = 0\}$ and let 
\begin{equation*}\label{lam}
     V : = \begin{cases}
 \frac{\bar\partial u}{u}, &\text{on}\ \  (\tilde \Omega\setminus\{0\})\setminus S; \\
0, &\text{on}\ \ \ S\cup\{0\}.
\end{cases} 
\end{equation*} Then  \begin{equation}\label{aa}
      \bar\partial u  = Vu\ \ \text{on}\ \   (\tilde\Omega\setminus\{0\} )\setminus S  
\end{equation} in the sense of distributions. Since $ |V|\le \tilde  V |z|^\frac{1-\beta}{\beta}\in L^q (\tilde \Omega )$ for all $ 1\le q<2$, letting 
$$f(z): = \frac{1}{\pi }\int_\Omega\frac{ V(\zeta)}{\zeta-z}dv_{\zeta},\ \ \ z\in \tilde\Omega,   $$
one has $$\bar\partial f = V \ \ \text{on} \ \ \Omega$$ in the sense of distributions. See, for instance, \cite{Ve}. 
 Moreover,   by H\"older's inequality and   Lemma \ref{ele1} with $n=1$,  
 $$ |f(z)|\le \| \tilde V\|_{L^{2\beta} (\tilde \Omega )}\left(\int_{\tilde \Omega}\frac{dv_{\zeta}}{|\zeta|^\frac{2\beta-2}{2\beta-1}|\zeta-z|^\frac{2\beta}{2\beta-1}}\right)^\frac{2\beta-1}{2\beta}\le   M\left(1+\left|\ln |z|\right|\right), \ \ z\in \tilde \Omega$$    for some constant $M>0$.
  Hence   there exists some $C>0$ such that   \begin{equation}\label{hh1}
   \left| e^{- f}\right| \le \frac{C}{|z|^M}\ \ \text{on}\ \ \tilde \Omega. 
\end{equation}

On the other hand, restricting  on ${\tilde \Omega}\setminus \{0\}$, we have  $ V\in L^{2\beta}_{loc} ({\tilde \Omega}\setminus \{0\} ) $. Hence $ f\in W_{loc}^{1, 2\beta} ({\tilde \Omega}\setminus \{0\} )$ by  Lemma \ref{el},  and further  $ f\in C^0 ({\tilde \Omega}\setminus \{0\} )$. Applying Proposition \ref{pde} to $ f$ on  ${\tilde \Omega}\setminus \{0\}$, we get
\begin{equation}\label{hh2}
    \bar\partial e^{- f} = -e^{- f}  V \ \ \text{on}\ \  {\tilde \Omega}\setminus \{0\}
\end{equation}
 in the sense of distributions. 

Let $h: =ue^{- f} $ on ${\tilde \Omega}\setminus \{0\}$. Then $h\in  C^0 ({\tilde \Omega}\setminus \{0\} )$. Repeating a similar argument as in the proof of Theorem \ref{main} to $h$ on $ ({\tilde \Omega}\setminus\{0\} )\setminus S$,  and using \eqref{aa} and \eqref{hh2}, one can further show that $\bar\partial h =0 $  on $ ({\tilde \Omega}\setminus\{0\} )\setminus S$. Noting that $f\in C^0 ({\tilde \Omega}\setminus \{0\} )$, we have $S=  h^{-1}(0)$ and so $\bar\partial h =0 $  on $ ({\tilde \Omega}\setminus\{0\} )\setminus h^{-1}(0) $. By Rado's theorem, $h$ is holomorphic on ${\tilde \Omega}\setminus \{0\}$. On the other hand, since $u$ vanishes to infinite order in the $L^2$ sense  at $0$, by \eqref{hh1} and Lemma \ref{22}, $h\in L^2_{loc} ({\tilde \Omega} )$ and   vanishes to infinite order in the $L^2$ sense  at $0$ as well. As a consequence of Lemma \ref{11}, $h = 0 $ and thus $u  = 0$ on $\tilde \Omega$.
\end{proof}
 \medskip

When $\beta=1$, the unique continuation holds due to Theorem \ref{pz} part 2). When $\beta = \infty$, by employing exactly the same argument as in the proof of Theorem \ref{333}, with    the index $ \frac{\beta-1}{\beta}$   replaced by  $1$ ($= \lim_{\beta\rightarrow \infty} \frac{\beta-1}{\beta}$), and $\tilde V$  by a positive constant $C$, one can obtain the following unique continuation property. As a result, it  resolves Theorem \ref{33} for $n=1$.

\begin{theorem}\label{3333}
Let $\Omega$ be a   domain in $\mathbb C$ containing $0$. Suppose $u: \Omega\rightarrow \mathbb C$ with $u\in  H^1_{loc}(\Omega)$, and satisfies  
$$  |\bar\partial u|\le  \frac{ C}{|z|}|u|\ \ \text{a.e. on}\ \ \Omega$$ for some constant $C>0$.  If $u$ vanishes to infinite order in the $L^2$ sense at $0$, then $u$ vanishes identically. 
\end{theorem}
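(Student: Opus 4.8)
The plan is to reduce Theorem \ref{3333} to a special case of Theorem \ref{333} by taking $\beta\to\infty$, but since we cannot literally set $\beta=\infty$ there, I would instead rerun the proof of Theorem \ref{333} with the obvious modifications. First I would fix a subdomain $\tilde\Omega\subset\subset\Omega$ containing $0$, and use the ellipticity Lemma \ref{el} together with Sobolev embedding to argue that $u$ is continuous on $\tilde\Omega\setminus\{0\}$: indeed away from $0$ the potential $\frac{C}{|z|}$ is bounded, so $\frac{C}{|z|}u\in L^p_{loc}(\tilde\Omega\setminus\{0\})$ for all $p<\infty$ because $u\in H^1_{loc}\subset L^p_{loc}$, hence $u\in W^{1,p}_{loc}(\tilde\Omega\setminus\{0\})\subset C^0(\tilde\Omega\setminus\{0\})$. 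Then I would define the zero set $S:=\{z\in\tilde\Omega\setminus\{0\}:u(z)=0\}$ and set
\[
V:=\begin{cases}\dfrac{\bar\partial u}{u},&\text{on }(\tilde\Omega\setminus\{0\})\setminus S;\\[1mm] 0,&\text{on }S\cup\{0\},\end{cases}
\]
so that $\bar\partial u=Vu$ on $(\tilde\Omega\setminus\{0\})\setminus S$ and $|V|\le\frac{C}{|z|}$, which lies in $L^q(\tilde\Omega)$ for every $q<2$.

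Next I would introduce the Cauchy transform $f(z):=\frac{1}{\pi}\int_{\tilde\Omega}\frac{V(\zeta)}{\zeta-z}\,dv_\zeta$, which satisfies $\bar\partial f=V$ on $\tilde\Omega$ in the sense of distributions. The crucial quantitative point is the logarithmic pointwise bound on $f$: since $|V(\zeta)|\le C/|\zeta|$, one has $|f(z)|\le \frac{C}{\pi}\int_{\tilde\Omega}\frac{dv_\zeta}{|\zeta|\,|\zeta-z|}$, and applying Lemma \ref{ele1} with $n=1$, $\alpha=\beta=1$ gives $|f(z)|\le M(1+|\ln|z||)$ for some constant $M$. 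This yields $|e^{-f}|\le C|z|^{-M}$ on $\tilde\Omega$ for a suitable $C,M>0$, exactly as in the inequality \eqref{hh1} in the proof of Theorem \ref{333}. Restricting to $\tilde\Omega\setminus\{0\}$, the potential $V$ is now in $L^p_{loc}$ for all $p<\infty$, so $f\in W^{1,p}_{loc}(\tilde\Omega\setminus\{0\})\subset C^0(\tilde\Omega\setminus\{0\})$ by Lemma \ref{el} and Sobolev embedding, and Proposition \ref{pde} gives $\bar\partial e^{-f}=-e^{-f}V$ on $\tilde\Omega\setminus\{0\}$ in the sense of distributions.

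I would then set $h:=ue^{-f}$ on $\tilde\Omega\setminus\{0\}$; it is continuous there, and the same product-rule computation as in the proof of Theorem \ref{main} (approximating $u$ by smooth functions and passing to the limit, using $\bar\partial u=Vu$ and $\bar\partial e^{-f}=-e^{-f}V$) shows $\bar\partial h=0$ on $(\tilde\Omega\setminus\{0\})\setminus S$. Since $f$ is continuous and nonvanishing in exponential, $S=h^{-1}(0)$, so $\bar\partial h=0$ off $h^{-1}(0)$, and by Rado's theorem $h$ is holomorphic on $\tilde\Omega\setminus\{0\}$. Finally, because $u$ vanishes to infinite order in the $L^2$ sense at $0$ and $|e^{-f}|\le C|z|^{-M}$, Lemma \ref{22} shows $h\in L^2_{loc}(\tilde\Omega)$ and $h$ also vanishes to infinite order in the $L^2$ sense at $0$; then Lemma \ref{11} forces $h\equiv 0$, hence $u\equiv 0$ on $\tilde\Omega$. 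Since $\tilde\Omega$ was an arbitrary such subdomain, $u$ vanishes near $0$, and the weak unique continuation in Theorem \ref{pz} part 1) extends this to $u\equiv 0$ on all of $\Omega$. The main obstacle — and the one genuinely new ingredient relative to the $L^2$ theory — is controlling $e^{-f}$: the potential $\frac{C}{|z|}$ is not in $L^2_{loc}$, so $f$ need not be in $W^{1,2}_{loc}$ near $0$ and the Moser–Trudinger machinery does not apply; instead the logarithmic growth estimate from Lemma \ref{ele1} is what keeps the singularity of $e^{-f}$ polynomial, which is exactly mild enough for Lemma \ref{22} to absorb it while preserving infinite-order vanishing.
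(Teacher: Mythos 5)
Your proposal is correct and is essentially the paper's own proof: the paper proves Theorem \ref{3333} by exactly the rerun of the argument for Theorem \ref{333} that you carry out (with the exponent $\frac{\beta-1}{\beta}$ replaced by $1$ and $\tilde V$ by the constant $C$), using the same ingredients — the cutoff of $V=\bar\partial u/u$ off the zero set, the Cauchy transform with the logarithmic bound from Lemma \ref{ele1} giving $|e^{-f}|\lesssim |z|^{-M}$, Rado's theorem, and Lemmas \ref{22} and \ref{11}. The only nit is the last step: since $\frac{C}{|z|}\notin L^2_{loc}$ at the origin, the appeal to Theorem \ref{pz} part 1) should be made on $\Omega\setminus\{0\}$ where the potential is locally bounded (as the paper does in the proof of Theorem \ref{33}), or one can simply note that your argument already gives $u=0$ on every $\tilde\Omega\subset\subset\Omega$ containing $0$, which exhausts $\Omega$.
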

\medskip

Next, we address the case where the source dimension $n\ge 1$. We first explore some direct applications of Theorem \ref{333} to the unique continuation problem for smooth solutions.

\begin{proof}[Proof of Corollary \ref{prn}: ]  Assume  that \eqref{eq4} fails, say for $z_0=0$.  Then   there exists some $V\in L^{2n}_{loc}(U)$ such that $|\bar\partial u|\le V|u|$ on $U$. When  $n=1$,  $u\equiv 0$ on $U$ due to Theorem \ref{pz} part 2). So we assume $n\ge 2$. Let $r$ be small such that $B_{r}\subset U$.  For each   $\zeta\in S^{2n-1}$, let $\tilde V(w): = |w|^{\frac{n-1}{n}}V(w\zeta)$ and $v(w): = u(w\zeta), w\in D_{r}$. Then $v$ vanishes to infinite order at $0$ in the $L^2$ sense  by Lemma \ref{fn}, and   satisfies
$$|\bar\partial v(w) | = |\zeta \cdot \bar\partial u(w\zeta)|\le V(w\zeta) |u(w\zeta)| =   |w|^{-\frac{n-1}{n}}\tilde V(w)|v(w)|,\ \ w\in D_{r}. $$ Moreover, for a.e. $\zeta\in S^{2n-1}$, $\tilde V\in L^{2n}_{loc}(D_{r})$ by Lemma \ref{cp}. 
 Theorem \ref{333} with $\beta =n$  applies to give    $v = 0$ on $D_{r}$ for a.e. $\zeta\in S^{2n-1}$. Hence  $u=0$ on $B_{r}$. The weak unique continuation property Theorem in \ref{pz} part 1)  further leads to  $u\equiv 0$ on $U$.  
 \end{proof}
\medskip

  Theorem \ref{333}  also allows us to recover a similar result as in Theorem \ref{main2} for smooth solutions  without imposing the $\bar\partial$-closedness assumption on the potential.

\begin{cor}\label{nod}
     Let $\Omega$ be a  domain in $\mathbb C^n$. Suppose $u: \Omega\rightarrow \mathbb C$ with $u\in C^\infty(\Omega)$,  and satisfies $ |\bar\partial u| \le V|u|$ a.e. on $\Omega$ for some    $V \in L_{loc}^{2n}(\Omega)$.  If $u$ vanishes to infinite order   at some $z_0\in \Omega$, then $u$ vanishes identically.
\end{cor}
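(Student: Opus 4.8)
\textbf{Proof proposal for Corollary \ref{nod}.}

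The plan is to reduce the higher-dimensional inequality to a one-dimensional one along almost every complex radial direction, exactly as in the proof of Theorem \ref{main3}, but this time landing in the hypothesis of Theorem \ref{333} (with $\beta = n$) rather than Theorem \ref{pz} part 2), so that no $\bar\partial$-closedness is needed. First I would localize: assume $z_0 = 0$ and pick $r>0$ small with $V\in L^{2n}(B_r)$. For each fixed $\zeta\in S^{2n-1}$ set $\tilde V(w) := |w|^{\frac{n-1}{n}}V(w\zeta)$ and $v(w):= u(w\zeta)$ for $w\in D_r$. Since $u\in C^\infty$ and all its derivatives vanish at $0$, all jets of $v$ vanish at $0$, hence $v$ vanishes to infinite order in the $L^2$ sense at $0$ by Lemma \ref{fn}. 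The chain rule gives $|\bar\partial v(w)| = |\zeta\cdot\bar\partial u(w\zeta)|\le V(w\zeta)|u(w\zeta)| = |w|^{-\frac{n-1}{n}}\tilde V(w)|v(w)|$ on $D_r$.

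Next I would check that $\tilde V\in L^{2n}_{loc}(D_r)$ for a.e. $\zeta\in S^{2n-1}$. By Lemma \ref{cp},
\begin{equation*}
\int_{|z|<r}|V(z)|^{2n}\,dv_z = \frac{1}{2\pi}\int_{|\zeta|=1}\int_{|w|<r}|w|^{2n-2}|V(w\zeta)|^{2n}\,dv_w\,dS_\zeta = \frac{1}{2\pi}\int_{|\zeta|=1}\int_{|w|<r}|\tilde V(w)|^{2n}\,dv_w\,dS_\zeta,
\end{equation*}
so the inner integral is finite for a.e. $\zeta$. For such $\zeta$, the one-variable inequality above has potential $\frac{\tilde V}{|w|^{(\beta-1)/\beta}}$ with $\beta = n$ and $\tilde V\in L^{2\beta}_{loc}(D_r)$, precisely the setting of Theorem \ref{333}. (When $n=1$ there is nothing to do: apply Theorem \ref{pz} part 2) or Theorem \ref{3333} directly.) Hence $v\equiv 0$ on $D_r$ for a.e. $\zeta\in S^{2n-1}$, which forces $u=0$ on $B_r$. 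Finally, since $V\in L^{2n}_{loc}\subset L^2_{loc}$, the weak unique continuation property of Theorem \ref{pz} part 1) upgrades this to $u\equiv 0$ on $\Omega$.

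There is essentially no new obstacle here: the argument is a near-verbatim repackaging of the proof of Theorem \ref{main3}, with the endpoint exponent $p=2n$ replacing $p>2n$ and the one-dimensional input strengthened from Theorem \ref{pz} to Theorem \ref{333}. The only point that needs a moment's care is the bookkeeping of exponents — confirming that with $p = 2n$ the weight $|w|^{2n-2}$ produced by Lemma \ref{cp} is exactly absorbed by setting the radial power to $\frac{n-1}{n}=\frac{\beta-1}{\beta}$, so that $\tilde V$ lands in $L^{2n}=L^{2\beta}$ and Theorem \ref{333} applies with no loss. No genuine difficulty is expected.
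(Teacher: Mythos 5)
Your proposal is correct and is essentially identical to the paper's own proof: the same radial slicing via Lemma \ref{cp}, the same reduction to Theorem \ref{333} with $\beta = n$ (and Theorem \ref{pz} when $n=1$), and the same conclusion via weak unique continuation.
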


\begin{proof}
     Let $r$ be small such that $B_{r}\subset  \Omega$. As in the proof of Corollary \ref{prn}, for each fixed $\zeta\in S^{2n-1}$, let $\tilde V(w): = |w|^{\frac{n-1}{n}}V(w\zeta)$ and $v(w): = u(w\zeta), w\in D_{r}$. Then  $v$ vanishes to infinite order at $0$ in the $L^2$ sense, and   satisfies
$$|\bar\partial v(w) |\le  |w|^{-\frac{n-1}{n}}\tilde V(w)|v(w)|,\ \ w\in D_{r}. $$
 For a.e. $\zeta\in S^{2n-1}$,  we apply   Theorem \ref{333}  to get $v = 0$ on $D_{r}$.   Hence  $u=0$ on $B_{r}$. The weak unique continuation property  further  gives $u\equiv 0$.    
\end{proof}
\medskip

We are now in a position to prove Theorem \ref{33} for $H^1_{loc}$ solutions. We shall use the following    special case of  \cite[Theorem A]{GR} concerning the zero set of solutions to \eqref{eqn2} with bounded potentials.

\begin{theorem}\cite{GR}\label{gr1}
  Let $\Omega$ be a domain in $\mathbb C^n$. Suppose  $u: \Omega\rightarrow \mathbb C$  with $u\in C^0(\Omega)$, and   satisfies  $  |\bar\partial u |  \le C|u|$ a.e. on $\Omega$ for some constant $C>0$. Then the zero set $u^{-1}(0) $ of $u$ is a complex analytic variety.  
  \end{theorem}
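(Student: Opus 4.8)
The plan is to reduce the assertion to a local \emph{similarity factorization} of $u$, in the spirit of the one--variable Bers--Vekua theory and its several--variable refinement in \cite{GR}. First I would record the regularity gain: since $u\in C^0(\Omega)$ and $|\bar\partial u|\le C|u|$ a.e., the $(0,1)$ form $\bar\partial u$ lies in $L^q_{loc}(\Omega)$ for every $q<\infty$ and is trivially $\bar\partial$-closed, so Lemma \ref{el} upgrades $u$ to $W^{1,q}_{loc}(\Omega)$ for all $q<\infty$; in particular $u$ is H\"older of every order $<1$, and $\bar\partial u=0$ a.e. on the level set $Z:=u^{-1}(0)$. Setting $V:=\bar\partial u/u$ on $\Omega\setminus Z$ and $V:=0$ on $Z$, I obtain a bounded $(0,1)$ form with $\bar\partial u=Vu$ a.e., hence distributionally, on $\Omega$; and the formal identity $0=\bar\partial^2u=(\bar\partial V)u-V\wedge\bar\partial u=(\bar\partial V)u$, justified by mollification as in Lemma \ref{el}, shows $V$ is $\bar\partial$-closed on the open set $\Omega\setminus Z$. (Note that if $Z$ were known a priori to be Lebesgue-null and removable for bounded $\bar\partial$-closed forms, then $V$ would be $\bar\partial$-closed on all of $\Omega$ and the factorization would follow from Theorem \ref{main}; but $Z$ is precisely the set whose structure is in question, so a more hands-on argument is needed.)

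Next I would dispose of the trivial cases near a point $p$: if $u\equiv0$ near $p$ then $Z$ locally equals a neighborhood of $p$, and if $u(p)\neq0$ then $Z$ is empty near $p$. So the real case is $p\in Z$ with $u\not\equiv0$ near $p$; here $\Omega\setminus Z$ is dense, for otherwise $Z$ would contain a ball on which $u$ vanishes, forcing $u\equiv0$ near $p$ by the weak unique continuation of Theorem \ref{pz}. The goal is then to build, on a small ball about $p$, a factorization $u=e^{g}h$ with $g$ continuous and $h$ holomorphic; granting this, $e^g$ is nonvanishing and $Z=h^{-1}(0)$, a complex analytic variety.

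For $n=1$ this is the classical similarity principle: after truncating $V$ to have compact support, the Cauchy transform $g(z):=\frac1\pi\int\frac{V(\zeta)}{\zeta-z}\,dv_\zeta$ solves $\bar\partial g=V$ with $g$ continuous (H\"older of every order $<1$; cf.\ \cite{Ve}), and then $h:=ue^{-g}$ satisfies $\bar\partial h=e^{-g}(Vu-uV)=0$. For $n\ge2$ I would slice by complex lines $\ell(w)=p+wv$: each restriction $u\circ\ell$ is continuous and satisfies $|\partial_{\bar w}(u\circ\ell)|\le C|v|\,|u\circ\ell|$, so by the one--variable case it is either $\equiv0$ near $0$ or has an isolated zero of finite order there. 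If $u\circ\ell\equiv0$ for an open set of directions $v$, then $u$ vanishes on an open subset of $\Omega$ and hence identically near $p$, a contradiction; so after a linear change of coordinates one may arrange that for all $z'=(z_2,\dots,z_n)$ near the origin the slice $z_1\mapsto u(z_1,z')$ vanishes at $z_1=0$ to a finite order bounded by some fixed $k$. A Rouch\'e / Weierstrass--preparation argument should then present $Z$ near $p$ as a (branched) cover of at most $k$ sheets over a domain in $\mathbb C^{n-1}$, cut out by a monic degree-$k$ polynomial $P(z_1,z')=\prod_j\bigl(z_1-a_j(z')\bigr)$ whose roots are the $z_1$-zeros of $u(\cdot,z')$.

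The hard part will be the last step: proving that the elementary symmetric functions of the roots $a_j(z')$ depend holomorphically on $z'$. Since $u(\cdot,z')$ is only a generalized analytic function in $z_1$, not holomorphic, one must glue the slicewise similarity factors $e^{g(\cdot,z')}$ into a jointly continuous function on a neighborhood of $p$ and transfer holomorphy in $z'$ from the slicewise Weierstrass factors $h(\cdot,z')$; the leverage for these estimates is exactly that $u\in W^{1,q}_{loc}$ for every $q$ (hence is H\"older of every order $<1$) together with the H\"older regularity of the Cauchy transform of $L^\infty$ data. For the details of this gluing, which is the technical heart of the theorem, I would follow \cite{GR}.
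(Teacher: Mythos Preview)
The paper does not prove this statement; Theorem \ref{gr1} is quoted from \cite{GR} (it is a special case of their Theorem A) and is used as a black box in the proof of Theorem \ref{33}. So there is no proof in the paper to compare against.

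That said, your outline is a faithful sketch of the Gong--Rosay strategy: bootstrap $u$ into $W^{1,q}_{loc}$ for all $q<\infty$ via Lemma \ref{el}, reduce to the one-variable similarity principle on complex-line slices, and then run a Weierstrass-preparation argument in which the zeros of $u(\cdot,z')$ are counted by Rouch\'e and packaged into a monic polynomial $P(z_1,z')$. You have correctly isolated the genuine difficulty as the holomorphic dependence of the symmetric functions of the roots on $z'$; this is indeed where \cite{GR} does the real work, and it cannot be short-circuited by the material in the present paper (Theorem \ref{main} would give the factorization $u=e^f h$ instantly if $V$ were known to be $\bar\partial$-closed across $Z$, but as you note, that is exactly what is at stake). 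Two small remarks: your claim that $\bar\partial u=0$ a.e.\ on $Z$ is justified once you know $u\in W^{1,q}_{loc}$ (a.e.\ vanishing of weak derivatives on level sets), and your density argument for $\Omega\setminus Z$ via Theorem \ref{pz} is fine since a bounded potential is certainly in $L^2_{loc}$. Beyond that, the proposal is an honest roadmap that ultimately defers to \cite{GR} for the technical core, which is appropriate here.
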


\begin{proof}[Proof of Theorem \ref{33}:]
 The   $n=1$ case was proved in Theorem \ref{3333}. So we assume $n\ge 2$. According to  Sobolev embedding theorem,    $u\in L^{q^*}_{loc}(\Omega )$ for  $q^*= \frac{2n}{n-1}$.   The ellipticity Lemma \ref{el} of $\bar\partial$ and the inequality $|\bar\partial u|\le \frac{C}{|z|}|u|\in L^{q^*}(\Omega\setminus \{0\})$ further ensures  $u\in W^{1,q^*}_{loc}(\Omega\setminus \{0\})$, and thus $u\in L^{q^{**}}$ with $q^{**}= \frac{2n}{n-2}$. Employing a boot-strap argument eventually gives $u\in W_{loc}^{1, q_0}(\Omega\setminus \{0\})$ for some $q_0>2n$. In particular, $u\in C^0(\Omega\setminus \{0\})$.

Set $S: =\{z\in \Omega\setminus \{0\}: u(z) = 0\}$. Let 
\begin{equation*}\label{lam1}
     V : = \begin{cases}
 \frac{\bar\partial u}{u}, &\text{on}\ \ \left(\Omega\setminus\{0\}\right)\setminus S; \\
0, &\text{on}\ \ \ S\cup\{0\}.
\end{cases} 
\end{equation*}
Then  \begin{equation}\label{off}
    \bar\partial u =  V u \ \ \text{on}\ \ \left(\Omega\setminus\{0\}\right)\setminus S 
\end{equation}   in the sense of distributions. One can also verify that   
\begin{equation}\label{off1}
    \bar\partial  V =0  \ \ \text{on}\ \   \left(\Omega\setminus\{0\}\right)\setminus S
\end{equation}  
in the sense of distributions. In fact, given a  subdomain $U\subset \subset \left(\Omega\setminus\{0\}\right)\setminus S $, since $u\in W_{loc}^{1, q_0}(\Omega\setminus \{0\}) \subset C^0(\Omega\setminus \{0\})$,  $|u|>c$ on $U$ for some constant $c>0$. Letting $\{u_j\}_{j=1}^\infty\in C^\infty(U)\rightarrow u$  in the $ W^{1, q_0}(U) $ norm. In particular,  $u_j\rightarrow u$ in the $C^0(U)$ norm. Thus by passing to a subsequence, we can assume $|u_j|> \frac{c}{2}$ on $U$.  Let $V_j: =  \frac{\bar\partial u_j}{u_j}$ on $U$. Clearly $ \bar\partial V_j= 0$ on $U$. Moreover, 
\begin{equation*}
    \begin{split}
    \|V_j - V \|_{L^1(U)} = & \left\|\frac{\bar\partial u_j}{u_j} - \frac{\bar\partial u}{u}\right\|_{L^1(U)} \le    \left\| \frac{1}{u_j}(\bar\partial u_j - \bar\partial u) \right\|_{L^1(U)}   +\left\| \left(\frac{1 }{u_j} - \frac{1}{u}\right)\bar\partial u  \right\|_{L^1(U)}\\
    \le& \frac{2}{c}  \left\|  \bar\partial u_j - \bar\partial u  \right\|_{L^1(U)}   +\frac{2}{c^2}\|u_j -u\|_{C^0(U)}  \left\|  \bar\partial u  \right\|_{L^1(U)}\rightarrow 0 
    \end{split}
\end{equation*}
as $j\rightarrow \infty$. In particular,  $ V_j\rightarrow V$ in the sense of distributions. Thus  the $\bar\partial$-closedness passes onto  $V$ on $ \left(\Omega\setminus\{0\}\right)\setminus S $.

Since $| V|\le \frac{C}{|z|}\in L^\infty_{loc}(\Omega\setminus \{0\}) $, according to Theorem \ref{gr1}, $S$ is a complex analytic variety in $ \Omega\setminus \{0\}$. If $S=\Omega\setminus \{0\}$, then we are done. Otherwise,  $S$ is  of complex dimension less than $n$.  Since $n\ge 2$, $V\in L^2_{loc}(\Omega)$.  Applying  a removable singularity result of Demailly \cite[Lemma 6.9]{De} to \eqref{off1},  \begin{equation}\label{aa7}
    \bar\partial  V =0  \ \ \text{on}\ \    \Omega
\end{equation}    in the sense of distributions. On the other hand, noting that     $u\in L_{loc}^2(\Omega)$ and $  V u \in L^1_{loc}(\Omega)$,  we can apply Demailly's result   to \eqref{off} and obtain      \begin{equation}\label{aa1}
      \bar\partial u  = Vu\ \ \text{on}\ \   \Omega  
\end{equation} 
in the sense of distributions.

Let $f_0, u_0$ and $v_0$ be as in \eqref{bm} with $ V$ defined above. Then $\bar\partial f_0 = V$ on $B_r $ in the sense of distributions  for some $r>0$. Restricting on ${B_r}\setminus\{0\} $, use the ellipticity Lemma \ref{el} to further obtain $ f_0\in W_{loc}^{1, p}({B_r}\setminus\{0\} )$ for all $p<\infty$. Hence we can apply Proposition \ref{pde} to $ f_0$ on  $B_r\setminus \{0\}$ and get
\begin{equation}\label{hh7}
    \bar\partial e^{- f_0} = -e^{- f_0}   V \ \ \text{on}\ \  B_r\setminus \{0\}
\end{equation}
 in the sense of distributions.   Let $$h: =ue^{- f_0}   \ \ \text{on} \ \ B_r\setminus\{0\}. $$   Repeating a similar argument as in the proof of Theorem \ref{main} to $h$ on $ B_r\setminus\{0\}  $,  and using \eqref{aa1} and \eqref{hh7}, one can further show that $$\bar\partial h =0 \ \ \text{  on}\ \   B_r\setminus\{0\}.$$   
Namely, $h$ is holomorphic on $B_r\setminus\{0\} $.

On the other hand, by the construction of $f_0$ in Lemma \ref{el} and the fact that $|V|\le \frac{C}{|z|}$ on $\Omega$, we have $v_0$ to be bounded on $B_\frac{r}{2} $. Moreover, apply  Lemma \ref{ele1} to $ u_0$ and get   $$ |f_0(z)|\le C\left(1 +\int_{B_r}\frac{ dv_{\zeta}}{  |\zeta||\zeta-z|^{2n-1}}\right) \le M\left(1+|\ln|z||\right), \ \ z\in B_\frac{r}{2}$$ for some constant $M>0$. Hence   there exists some $C_1>0$ such that   \begin{equation} \label{re1}
   \left| e^{-f_0}\right| \le \frac{C_1}{|z|^M}\ \ \text{on}\ \  B_\frac{r}{2}.
\end{equation}
 Since $u$ vanishes to infinite order in the $L^2$ sense  at $0$, by \eqref{re1} and Lemma \ref{22}, $h\in L^2_{loc}(B_r)$ and   vanishes to infinite order in the $L^2$ sense  at $0$ as well. As a consequence of Lemma \ref{11}, $h =0 $ on $B_r$ and thus $u (= e^ 
 {f_0} h) = 0$ on $B_r$. Since $ \frac{C}{|z|} \in L^\infty_{loc}(\Omega\setminus \{0\}) $, applying the weak unique continuation property of $|\bar\partial u|\le V|u|$ on $\Omega\setminus \{0\}$ with $L^\infty_{loc}$ potentials, one further gets $u\equiv 0$.
\end{proof}
\medskip

\begin{remark}
  It should be pointed  out that, although the statement of \cite[Theorem A]{GR} does not   explicitly mention it, the $\bar\partial$-closedness of $V$ as indicated in \eqref{aa7} near $S$ has already been established in its proof  towards the analyticity  of the zero set $S$. We opt to utilize the statement directly and subsequently employ  Demailly's result to demonstrate it for the convenience of readers. 
\end{remark}

\section{Unique continuation for potentials involving $\frac{1}{|z|}$ for $N \ge 2$    }\label{se7}
In this section, we   study the unique continuation for a $H_{loc}^1(\Omega)$ solution $u: \Omega\rightarrow \mathbb C^N$   to the  inequality 
\begin{equation*} 
    |\bar\partial u|\le \frac{C}{|z|}|u|\ \ \text{a.e. on}\ \ \Omega, 
\end{equation*}
when the target dimension $N\ge 2$.
As  stated in Theorem \ref{33},    the unique continuation property  holds true when   $N=1$  for any constant multiple $C>0$ in the potential. However, when   $N\ge 2$, this    property  no longer holds in general if $C$ is  large,  as indicated by an example   below  of the first author and  Wolff in \cite{PW98}.   See also \cite{AB94} by Alinhac and Baouendi for an alternative example. 

\begin{example}\label{ex2}
    Let   $v_0: \mathbb C\rightarrow \mathbb C$ be the  nontrivial smooth scalar function constructed in \cite{PW98} that vanishes to infinite order at $0$ and  satisfies $|\triangle v_0|\le \frac{C^\sharp}{|z|}|\nabla v_0|$  on $\mathbb C$ for some constant $C^\sharp>0 $. Letting  $u_0: = ( \partial \Re v_0, \partial \Im  v_0)$, then $u_0: \mathbb C\rightarrow \mathbb C^2$ is smooth,  vanishes to infinite order  at $0$, and  satisfies $|\bar\partial u_0|\le \frac{C^\sharp}{2|z|}|  u_0|$ on $\mathbb C$. 
\end{example}

In spite of Example \ref{ex2}, we shall prove  that the unique continuation property still holds if the constant multiple $C$ is small enough.

\begin{theorem}\label{mm1}
Let $\Omega$ be a  domain in $\mathbb C$ and $0\in \Omega$. Let  $u: \Omega\rightarrow \mathbb C^N$ with $u\in H_{loc}^1(\Omega)$, and  satisfy $|\bar\partial u|\le \frac{C}{|z|}|u| $ a.e. on $\Omega$ for some positive constant $C<\frac{1}{4 }$. If $u$ vanishes to infinite order in the $L^2$ sense  at  $0$, then $u $ vanishes identically.
\end{theorem}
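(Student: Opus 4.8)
The plan is to reduce the vector-valued problem on a planar domain $\Omega\subset\mathbb C$ to a situation where a Carleman-type estimate can be applied. First I would invoke the weak unique continuation property (Theorem \ref{pz} part 1), applied to the $L^2_{loc}(\Omega\setminus\{0\})$ potential $\frac{C}{|z|}$), so that it suffices to prove $u$ vanishes in a punctured neighborhood of $0$; thus I may assume $\Omega = D_{r_0}$ for small $r_0$ and work with $u\in H^1$ there, smoothing out where needed. Since $n=1$, the relevant weight exponent is governed by the one-dimensional Hardy inequality, which is exactly where the constant $\frac14$ enters: for $\phi\in C_c^\infty(\mathbb C\setminus\{0\})$ one has $\int \frac{|\phi|^2}{|z|^2}\,dv \le 4\int |\partial_{\bar z}\phi|^2\,dv$ (equivalently $\int\frac{|\phi|^2}{|z|^2}\le 4\int|\nabla\phi|^2$, with the $\bar\partial$ version following since $|\bar\partial\phi|^2 = \frac14|\nabla\phi|^2$ pointwise up to the standard normalization). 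This is the crux: $C<\frac14$ makes $\frac{C}{|z|}|u|$ absorbable by $|\bar\partial u|$ in an $L^2$ sense after multiplying by cutoffs.

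The key steps, in order, would be: (1) localize and reduce to showing $u\equiv 0$ near $0$, using weak unique continuation to finish; (2) establish (or cite from \cite{PZ}, via the weighted Cauchy integral estimate and the Carleman inequality Proposition \ref{hh3} referenced for Theorem \ref{main6}) a Carleman estimate of the form $\int |z|^{-2\tau}\frac{|\phi|^2}{|z|^2}\,dv \le C_0\int |z|^{-2\tau}|\bar\partial\phi|^2\,dv$ for all $\phi\in C_c^\infty(\mathbb C\setminus\{0\})$ and all $\tau\notin$ some exceptional discrete set, with a constant $C_0$ that can be taken to satisfy $C_0 C^2 < 1$ precisely when $C<\frac14$ — the weight $|z|^{-2\tau}$ being admissible because in complex dimension one the monomials $z^k\bar z^{\ell}$ diagonalize the relevant operator and the spectral gap to the weighted Hardy constant is exactly $\frac14$ uniformly in $\tau$ away from integers; (3) apply this to $\phi = \chi u$ for a radial cutoff $\chi$ supported in $D_{r_0}\setminus\{0\}$, equal to $1$ on an annulus, using $|\bar\partial(\chi u)|\le \chi|\bar\partial u| + |\bar\partial\chi|\,|u| \le \frac{C\chi}{|z|}|u| + |\bar\partial\chi|\,|u|$; (4) absorb the $\frac{C\chi}{|z|}|u|$ term into the left side using $C_0C^2<1$, leaving only the commutator term supported where $\bar\partial\chi\ne 0$, i.e., on a fixed annulus near $|z|=r_0$ and near $|z|=0$; (5) let $\tau\to+\infty$: the flatness of $u$ at $0$ (Lemma \ref{fn} is not needed here since we only have $H^1$, but the $L^2$-order-of-vanishing hypothesis directly gives that the inner cutoff contribution, weighted by $|z|^{-2\tau}$, is controlled) kills the contribution from the inner annulus, while on the outer annulus $|z|\asymp r_0$ the weight $|z|^{-2\tau}$ is comparable to $r_0^{-2\tau}$ on both sides, and comparing exponents forces $u$ to vanish on the intermediate annulus; (6) vary $r_0$ to conclude $u\equiv 0$ near $0$, then finish with weak unique continuation.

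The main obstacle I expect is making the constant in the weighted estimate sharp enough: one needs the $|z|^{-2\tau}$-weighted Hardy/Carleman constant to stay below $\frac{1}{C^2}$ for \emph{all} large $\tau$ (not just asymptotically), uniformly, with the threshold constant being exactly $4$. This requires a careful spectral analysis on the sphere $S^1$ — decomposing $\phi$ into Fourier modes $e^{ik\theta}$ and reducing to a one-dimensional ODE inequality in the radial variable after the substitution $r = e^{t}$ — and checking that the worst mode still leaves the gap $\frac14$; equivalently, verifying that $\bar\partial$ conjugated by $|z|^{\tau}$ and written in $(t,\theta)$ coordinates has no eigenvalue making the Hardy constant degenerate when $\tau$ is bounded away from $\mathbb Z$ (or $\tfrac12\mathbb Z$). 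A secondary technical point is handling the low regularity: $u$ is only $H^1_{loc}$, so I would first use the ellipticity lemma (Lemma \ref{el}) on $\Omega\setminus\{0\}$ to upgrade $u$ to $W^{1,p}_{loc}(\Omega\setminus\{0\})$ with $p>2$ — since $\frac{C}{|z|}|u|\in L^p_{loc}(\Omega\setminus\{0\})$ once $u$ is continuous by Sobolev embedding, bootstrapping as in the proof of Theorem \ref{33} — so that $u$ is continuous off the origin and the cutoff manipulations and density arguments approximating $\chi u$ by $C_c^\infty(\mathbb C\setminus\{0\})$ functions are justified.
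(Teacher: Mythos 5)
Your overall architecture is the paper's: the Carleman inequality with weights $|z|^{-2\lambda}$, $\lambda\in\mathbb Z+\left\{\frac12\right\}$ (Proposition \ref{hh3}, proved exactly as you sketch, by Fourier decomposition in $\theta$ and the substitution $r=e^t$, the constant $16$ coming from $(\lambda+k)^2\ge\frac14$), absorption of the potential term using $16C^2<1$ (this is precisely where $C<\frac14$ enters), and the weak unique continuation of Theorem \ref{pz} part 1) to finish once $u=0$ near $0$. One remark before the main issue: the ``crux'' inequality you state at the outset, $\int |\phi|^2/|z|^2\,dv\le 4\int|\bar\partial\phi|^2\,dv$ for all $\phi\in C_c^\infty(\mathbb C\setminus\{0\})$, is false --- on radial functions (the $k=0$ Fourier mode with $\lambda=0$) it reduces to the critical two-dimensional Hardy inequality, whose sharp constant degenerates; this is exactly why the weight exponent must be kept at half-integers. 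Since your actual steps only invoke the weighted version with $\tau$ in the half-integer class, this is a motivational slip rather than a fatal one. Your proposed bootstrap to continuity off the origin is also unnecessary: Proposition \ref{hh3} is stated for $H^1(\mathbb C)$ functions supported away from $0$, so the cutoff of $u$ is admissible as is.

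The genuine gap is in steps (3)--(5): you take a single cutoff $\chi$ whose inner transition region is a \emph{fixed} annulus near $0$ and then let $\tau\to\infty$, claiming the flatness hypothesis kills the inner commutator term. With a fixed inner annulus of radius $\approx\delta$ this fails: that term carries the weight $\approx\delta^{-2\tau}$, the largest weight appearing anywhere in the inequality, so as $\tau\to\infty$ it dominates the left-hand side (which on $\{|z|\ge 2\delta\}$ has weight at most $(2\delta)^{-2\tau-2}$), and flatness at the origin gives no smallness for $\int_{\delta<|z|<2\delta}|u|^2$, which is just a fixed number. The correct order --- and this is what the paper does --- is a two-parameter argument: introduce a second, shrinking inner cutoff $\psi_k=\psi(k\cdot)$ and, at \emph{fixed} $\lambda$, let $k\to\infty$ first; flatness gives $k^{2\lambda}\int_{|z|<2/k}|u|^2\to0$, so the inner commutator disappears (Fatou handles the limit), leaving only the outer commutator supported on $D_{2r}\setminus D_r$; only then let $\lambda\to\infty$, comparing the weight $(2/r)^{2\lambda}$ on $D_{r/2}$ against $r^{-2\lambda+2}$ on the outer annulus to force $u=0$ on $D_{r/2}$. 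With that correction, your argument coincides with the paper's proof.
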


In order to prove Theorem \ref{mm1}, we need to establish  a 
Carleman inequality for $\bar\partial$ (and its conjugate $\partial)$, making use of  a  Fourier analysis method, along with   the following  lemma.

\begin{lem}\label{gg}
Let $f: (-\infty, 0) \rightarrow \mathbb C^N$ with $f\in C_c^\infty((-\infty, 0))$. Then for any $\lambda, k\in \mathbb R$,
$$  \int_{-\infty}^0e^{-2\lambda t}\left|(\partial_t+k)f(t)\right|^2dt\ge (\lambda+k )^2\int_{-\infty}^0 e^{-2\lambda t}|f(t)|^2 dt. $$
\end{lem}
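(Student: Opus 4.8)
\textbf{Proof plan for Lemma \ref{gg}.}

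The plan is to prove the estimate by an integration-by-parts argument on the interval $(-\infty,0)$, exploiting that $f$ is compactly supported there, so no boundary terms appear. First I would introduce the substitution $g(t) := e^{-\lambda t} f(t)$, which still lies in $C_c^\infty((-\infty,0);\mathbb C^N)$. A direct computation gives $(\partial_t + k) f = e^{\lambda t}\bigl((\partial_t + \lambda + k) g\bigr)$, so that $e^{-2\lambda t}|(\partial_t+k)f|^2 = |(\partial_t + \lambda + k)g|^2$ and $e^{-2\lambda t}|f|^2 = |g|^2$. Thus the claimed inequality reduces to the unweighted statement
\[
\int_{-\infty}^0 \bigl|(\partial_t + \mu)g(t)\bigr|^2\,dt \ \ge\ \mu^2 \int_{-\infty}^0 |g(t)|^2\,dt,\qquad \mu := \lambda + k,
\]
for all $g\in C_c^\infty((-\infty,0);\mathbb C^N)$.

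Next I would expand the left-hand side:
\[
\int_{-\infty}^0 |(\partial_t+\mu)g|^2\,dt = \int_{-\infty}^0 |g'|^2\,dt + 2\mu\,\Real\!\int_{-\infty}^0 \langle g', g\rangle\,dt + \mu^2\int_{-\infty}^0 |g|^2\,dt.
\]
The cross term is handled by noting $2\,\Real\langle g', g\rangle = \partial_t |g|^2$, so $\int_{-\infty}^0 \partial_t|g|^2\,dt = |g(0)|^2 - \lim_{t\to-\infty}|g(t)|^2 = 0$ since $g$ has compact support in $(-\infty,0)$. Hence the middle term vanishes, and what remains is
\[
\int_{-\infty}^0 |(\partial_t+\mu)g|^2\,dt = \int_{-\infty}^0 |g'|^2\,dt + \mu^2\int_{-\infty}^0 |g|^2\,dt \ \ge\ \mu^2\int_{-\infty}^0 |g|^2\,dt,
\]
because $\int |g'|^2 \ge 0$. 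Unwinding the substitution yields the lemma.

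There is essentially no serious obstacle here; the only point requiring a small amount of care is the vanishing of the boundary term in the integration by parts, which is where the hypothesis $f\in C_c^\infty((-\infty,0))$ (rather than merely $C_c^\infty((-\infty,0])$) is used — this forces $g(0)=0$ as well, though in fact even $g(0)=0$ alone would suffice for the argument since the other endpoint contributes nothing by decay. One should also note that the componentwise inner product $\langle\cdot,\cdot\rangle$ on $\mathbb C^N$ makes $2\,\Real\langle g',g\rangle = \partial_t|g|^2$ valid verbatim, so the vector-valued case is no harder than the scalar one.
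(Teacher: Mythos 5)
Your proposal is correct and follows essentially the same route as the paper: the substitution $g(t)=e^{-\lambda t}f(t)$, expansion of $|(\partial_t+\lambda+k)g|^2$, vanishing of the cross term via $2\Real\langle g_t,g\rangle=\partial_t|g|^2$ and compact support, and discarding the nonnegative $\int|g_t|^2$ term. No gaps.
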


\begin{proof}
Letting $g(t):  = e^{-\lambda t}f(t), t\in ( -\infty, 0)$, then its derivative  $g_t = e^{-\lambda t}(f_t-\lambda f)$, and further
$$ e^{-\lambda t} (\partial_t+k)f   = e^{-\lambda t}(f_t+kf) = g_t +(\lambda +k)g = (\partial_t +\lambda +k)g. $$
Consequently, 
\begin{equation*}
    \begin{split}
        \int_{-\infty}^0e^{-2\lambda t}\left|(\partial_t+k)f(t)\right|^2dt =& \int_{-\infty}^0\left|(\partial_t +\lambda +k)g(t)\right|^2dt\\
        =&\int_{-\infty}^0 |g_t(t)|^2 dt+(\lambda +k)^2 \int_{-\infty}^0|g(t)|^2 dt + 2(\lambda+k)Re \int_{-\infty}^0\langle g_t(t),  g(t)\rangle dt\\
        \ge& (\lambda +k)^2\int_{-\infty}^0e^{-2\lambda t}|f(t)|^2 dt+ 2(\lambda+k)Re \int_{-\infty}^0\langle g_t(t),  g(t)\rangle dt.
    \end{split}
\end{equation*}
Note that since $g\in  C_c^\infty((-\infty, 0))$, 
$$ 0= \int_{-\infty}^0 \frac{d}{dt}(|g|^2) dt = 2Re \left(\int_{-\infty}^0\langle g_t, g\rangle dt\right). $$ We obtain the desired inequality.
\end{proof}
\medskip

\begin{pro}\label{hh3}
For any $u: \mathbb C\rightarrow \mathbb C^N$ with $u\in H^1(\mathbb C)$ and  supported outside a neighborhood of $0$, and for any $\lambda \in \mathbb Z+\left\{\frac{1}{2} \right\}$,  
\begin{equation}\label{ha}
   \int_{\mathbb C}\frac{|u(z)|^2}{|z|^{2\lambda+2}} dv_z  \le 16\int_{\mathbb C}\frac{|\partial u(z)|^2}{|z|^{2\lambda}} dv_z \end{equation}
  and
   \begin{equation*}\int_{\mathbb C}\frac{|u(z)|^2}{|z|^{2\lambda+2}} dv_z  \le 16\int_{\mathbb C}\frac{|\bar\partial u(z)|^2}{|z|^{2\lambda}} dv_z.
\end{equation*} 
\end{pro}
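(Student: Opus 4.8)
The plan is to reduce the two-dimensional weighted estimate \eqref{ha} to a family of one-dimensional estimates on the real line via a logarithmic change of variables and a Fourier (Laurent) decomposition in the angular variable, and then apply Lemma \ref{gg} mode by mode. Concretely, I would pass to polar coordinates $z = e^{t + i\theta}$ with $t = \ln|z| \in (-\infty, \infty)$, so that $z = r e^{i\theta}$, $r = e^t$, and $dv_z = r^2\, dt\, d\theta = e^{2t}\, dt\, d\theta$. A function $u$ supported outside a neighborhood of $0$ becomes, in $(t,\theta)$ coordinates, a function supported in $\{t < T_0\}$ for some $T_0$ (and with the decay at $+\infty$ irrelevant since $u\in H^1(\mathbb C)$ is genuinely supported in a bounded set — one may as well assume $u$ is compactly supported in $\mathbb C\setminus\{0\}$, or handle the behavior at infinity by the same Hardy-type inequality with the opposite sign, but the interesting endpoint is $t\to -\infty$). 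The first key computation is to express the operator $\partial = \partial_z$ in these coordinates: since $\partial_z = \frac12(\partial_x - i\partial_y)$ and $z = e^{t+i\theta}$, one finds $\partial_z = \frac{1}{2} e^{-(t+i\theta)}(\partial_t - i\partial_\theta)$, hence $|z|^{-\lambda}\partial u = \frac{1}{2} e^{-\lambda t} e^{-(t+i\theta)}(\partial_t - i\partial_\theta)u$, and therefore
\begin{equation*}
\int_{\mathbb C}\frac{|\partial u|^2}{|z|^{2\lambda}}\, dv_z = \frac{1}{4}\int_{-\infty}^\infty\int_0^{2\pi} e^{-2\lambda t - 2t - 2t + 2t}\,\bigl|(\partial_t - i\partial_\theta)u\bigr|^2\, d\theta\, dt = \frac14 \int_{-\infty}^\infty\int_0^{2\pi} e^{-2\lambda t}\bigl|(\partial_t - i\partial_\theta)u\bigr|^2\, d\theta\, dt,
\end{equation*}
while $\int_{\mathbb C}|z|^{-2\lambda-2}|u|^2\, dv_z = \int_{-\infty}^\infty\int_0^{2\pi} e^{-2\lambda t}|u|^2\, d\theta\, dt$. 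So \eqref{ha} is equivalent to the weighted one-dimensional-in-$t$ inequality $\int\!\!\int e^{-2\lambda t}|u|^2 \le 4\int\!\!\int e^{-2\lambda t}|(\partial_t - i\partial_\theta)u|^2$.

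Next I would expand $u(t,\theta) = \sum_{k\in\mathbb Z} u_k(t) e^{ik\theta}$ in Fourier series in $\theta$; then $(\partial_t - i\partial_\theta)u = \sum_k (\partial_t + k) u_k(t)\, e^{ik\theta}$, and by Parseval the inequality decouples into: for every $k\in\mathbb Z$,
\begin{equation*}
\int_{-\infty}^\infty e^{-2\lambda t}|u_k(t)|^2\, dt \le 4\int_{-\infty}^\infty e^{-2\lambda t}\bigl|(\partial_t + k)u_k(t)\bigr|^2\, dt.
\end{equation*}
This is exactly the content of Lemma \ref{gg} (after restricting to the support where $t$ ranges over a half-line, and noting the lemma's constant is $(\lambda + k)^2$): we get the bound with constant $(\lambda+k)^{-2}$ on the right, and since $\lambda \in \mathbb Z + \{1/2\}$ and $k\in\mathbb Z$, we have $\lambda + k \in \mathbb Z + \{1/2\}$, so $|\lambda+k| \ge 1/2$, giving $(\lambda+k)^{-2} \le 4$. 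Summing over $k$ recovers the claimed inequality with constant $16$ in \eqref{ha} (the factor $4$ from the coordinate change times the factor $4$ from $|\lambda+k|\ge 1/2$). For the $\bar\partial$ version one repeats the computation with $\bar\partial_z = \frac12 e^{-(t-i\theta)}(\partial_t + i\partial_\theta)$ in place of $\partial_z$; the conjugate structure produces $(\partial_t - k)u_k$ for the mode $u_k$, and since $|\lambda - k| \ge 1/2$ as well, the same bound follows. Alternatively the $\bar\partial$ estimate follows from the $\partial$ estimate applied to $\bar u$ (replacing $u$ by its complex conjugate componentwise swaps $\partial$ and $\bar\partial$ up to conjugation), which is the cleanest route.

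The one genuinely delicate point — and the step I expect to need the most care — is the reduction to the compactly-supported smooth setting required to invoke Lemma \ref{gg}, which is stated for $f\in C_c^\infty((-\infty,0))$. The hypothesis is $u\in H^1(\mathbb C)$ supported outside a neighborhood of $0$, i.e. in the coordinates $u_k\in H^1$ supported in a half-line $\{t\le T_0\}$ but not a priori compactly supported or smooth there. I would handle this by first translating the $t$-variable so the support lies in $(-\infty,0)$, then approximating: mollify in $t$ to gain smoothness, and multiply by a cutoff $\chi(t/R)$ to truncate near $-\infty$, checking that the cutoff error in the term $\int e^{-2\lambda t}|(\partial_t+k)(\chi u_k)|^2$ is controlled — here the weight $e^{-2\lambda t}$ grows as $t\to-\infty$ when $\lambda>0$ but decays when $\lambda<0$, so one must argue slightly differently according to the sign of $\lambda$, or observe that membership of $|z|^{-\lambda}\partial u$ and $|z|^{-\lambda-1}u$ in $L^2$ (implicit in the finiteness of the right-hand side, which we may assume) already forces enough decay to push the cutoff error to zero. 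A clean way to organize this: prove the inequality first for $u\in C_c^\infty(\mathbb C\setminus\{0\})$ by the Fourier argument above, then extend to the general $u$ by density, noting that if the right side of \eqref{ha} is infinite there is nothing to prove, and if it is finite then $u$ lies in the closure of $C_c^\infty(\mathbb C\setminus\{0\})$ in the norm defined by the right-hand side and the estimate passes to the limit. Everything else is routine bookkeeping of constants.
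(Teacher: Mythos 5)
Your proposal is correct and follows essentially the same route as the paper's proof: the log-polar substitution $z=e^{t+i\theta}$, Fourier expansion in $\theta$ so that $z\partial_z$ acts as $\tfrac12(\partial_t+k)$ on the $k$-th mode, Lemma \ref{gg} together with $(\lambda+k)^2\ge\tfrac14$ for $\lambda\in\mathbb Z+\{\tfrac12\}$, the conjugation identity $\bar\partial u=\overline{\partial\bar u}$ for the second inequality, and a final density argument from $C_c^\infty(\mathbb C\setminus\{0\})$ to $H^1$ functions supported away from $0$. The only slip is incidental: such a $u\in H^1(\mathbb C)$ need not have bounded support (so the relevant approximation issue is at $|z|\to\infty$, not at $t\to-\infty$), but the truncation/density argument you sketch is exactly how the paper handles it.
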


\begin{proof}
We shall only prove \eqref{ha} in terms of  $\partial u $, as $\bar\partial u=\overline{\partial \bar u}$. Since the proof involves derivatives on other variables as well, we use $u_z$ instead of $ \partial u$ to emphasize its derivative with respect to  $z$. 

First, we consider $u\in C_c^\infty(\mathbb C\setminus\{0\})$. Since the inequality is scaling-invariant, without loss of generality we  assume $u$ is supported inside the unit disc $ D_1$.  Let $v(t, \theta): = u(e^{t+i\theta}),$ $t\in (-\infty, 0), \theta \in (0, 2\pi)$. Write the Fourier series of $v$ as $$v(t, \theta) = \sum_{k\in \mathbb Z} v_k(t) e^{ik\theta}, $$ where    
$$v_k(t): = \frac{1}{2\pi}\int_0^{2\pi} v(t, \theta) e^{-ik\theta}d\theta \in C_c^\infty((-\infty, 0)). $$
According to the Parseval's identity,
\begin{equation}\label{pa}
    \int_0^{2\pi}|v(t, \theta)|^2d\theta = 2\pi \sum_{k\in \mathbb Z}|v_k(t)|^2.
\end{equation}   
Then  under the coordinate change $r= e^t$, we have
\begin{equation}\label{pp}
    \begin{split}
  \int_{D_1} \frac{|u(z)|^2}{|z|^{2\lambda+2}}dv_{z} = &\int_0^{2\pi}\int_0^1 r^{-2\lambda-1}|u(r, \theta)|^2drd\theta 
  =  \int_0^{2\pi}\int_{-\infty}^0 e^{(-2\lambda-1)t}|v(t, \theta)|^2e^{t}dtd\theta\\
  =& \int_{-\infty}^0\int_0^{2\pi} e^{-2\lambda t}|v(t, \theta)|^2d\theta dt.
    \end{split}
\end{equation}

On the other hand,  note that for $z = e^{t}e^{i\theta}$, one has $z \partial_z =  \frac{1}{2}\left(\partial_t -i\partial_\theta    \right) $. Thus  $e^{t+i\theta} v_z = zv_z = \frac{1}{2}\left( {\partial_t} -i{\partial_\theta}    \right) v= \frac{1}{2}\sum_{k\in \mathbb Z} (\partial_t+k)v_k(t) e^{ik\theta} $ and 
$$\int_0^{2\pi}|e^{t}v_z(t, \theta)|^2d\theta  = \frac{\pi}{2}\sum_{k\in\mathbb Z} \left|(\partial_t+k)v_k(t)\right|^2.  $$
Hence  
 \begin{equation*}
     \begin{split}
      \int_{D_1} \frac{|u_z(z)|^2}{|z|^{2\lambda}}dv_{z} = &\int_0^{2\pi}\int_0^1    r^{-2\lambda+1}|u_z(re^{i \theta})|^2drd\theta  
      = \int_0^{2\pi}\int_{-\infty}^0   e^{-2\lambda t +2t}|v_z(t, \theta)|^2dtd\theta\\
      =&\int_{-\infty}^0 e^{-2\lambda t}\int_0^{2\pi}|e^tv_z(t, \theta)|^2d\theta  dt 
      = \frac{\pi}{2} \sum_{k\in\mathbb Z}\int_{-\infty}^0 e^{-2\lambda t}\left| (\partial_t+k)v_k(t)  \right|^2dt.
     \end{split}
 \end{equation*}
Applying Lemma \ref{gg} to $v_k$ and making use of the fact that $(\lambda +k)^2\ge \frac{1}{4} $ whenever $\lambda \in \mathbb Z +\left\{\frac{1}{2} \right\}$ and $k\in \mathbb Z$, 
\begin{equation*}
     \begin{split}
      \int_{D_1} \frac{|u_z(z)|^2}{|z|^{2\lambda}}dv_{z} \ge &\frac{\pi}{2} \sum_{k\in\mathbb Z}(\lambda+k)^2 \int_{-\infty}^0 e^{-2\lambda t}\left| v_k(t)  \right|^2dt
      \ge  \frac{\pi}{8} \sum_{k\in\mathbb Z} \int_{-\infty}^0 e^{-2\lambda t}\left| v_k(t)  \right|^2dt \\
       = &\frac{1}{16} \int_{-\infty}^0 \int_0^{2\pi}e^{-2\lambda t}|v(t, \theta)|^2d\theta dt.
     \end{split}
 \end{equation*}
Here in the last line we also used \eqref{pa}. The   inequality \eqref{ha} for $u\in C_c^\infty(\mathbb C\setminus\{0\}) $ is proved by combining the above inequality with \eqref{pp}.

For general $u\in H^1(\mathbb C)$ in the proposition, let $r>0$ be small such that the support of $u$ is outside $D_r$. Pick  a family  $u_j \in C_c^\infty(\mathbb C\setminus D_r)\rightarrow u$ in $H^1(\mathbb C)$ norm. Then applying  \eqref{ha} to $u_j\in C_c^\infty(\mathbb C\setminus \{0\})$, we get
\begin{equation*}
    \begin{split}
   \left(\int_{\mathbb C}\frac{|u(z)|^2 }{|z|^{2\lambda+2}}dv_z \right)^\frac{1}{2}  \le&    \left(\int_{\mathbb C\setminus D_r}\frac{|u(z)-u_j(z)|^2 }{|z|^{2\lambda+2}}dv_z\right)^\frac{1}{2} + \left(\int_{\mathbb C}\frac{|u_j(z)|^2}{|z|^{ 2\lambda+2}} dv_z\right)^\frac{1}{2} \\
    \le&  {r^{-\lambda-1}}\left(\int_{\mathbb C}|u(z)-u_j (z)|^2dv_z\right)^\frac{1}{2}+   4\left(\int_{\mathbb C}  \frac{| (u_j)_z (z)|^2}{|z|^{2\lambda}}dv_z\right)^\frac{1}{2}.
    \end{split}
\end{equation*}
 Since
 \begin{equation*}
     \begin{split}
      \left(\int_{\mathbb C}  \frac{| (u_j)_z (z)|^2}{|z|^{2\lambda}}dv_z\right)^\frac{1}{2}\le & \left(\int_{\mathbb C\setminus D_r} \frac{ |  u_z(z)- (u_j)_z (z)|^2}{|z|^{2\lambda } } dv_z\right)^\frac{1}{2}  
    +   \left(\int_{\mathbb C} \frac{|  u_z(z) |^2}{ |z|^{2\lambda }} dv_z\right)^\frac{1}{2}  \\
    \le &    r^{-\lambda }  \left(\int_{\mathbb C} | u_z(z)-(u_j)_z (z)|^2dv_z\right)^\frac{1}{2}  
     +   \left(\int_{\mathbb C} \frac{|  u_z(z) |^2}{|z|^{2\lambda } } dv_z\right)^\frac{1}{2}, 
     \end{split}
 \end{equation*}
 one thus has
\begin{equation*}
\begin{split}
      \left(\int_{\mathbb C}\frac{|u(z)|^2 }{|z|^{ 2\lambda+2}}dv_z \right)^\frac{1}{2}  
      \le   \left( {r^{-\lambda-1}}+  4  r^{-\lambda } \right)\|u-u_j \|_{H^1(\mathbb C)}+  4\left(\int_{\mathbb C} \frac{|  u_z(z) |^2}{|z|^{2\lambda} }dv_z\right)^\frac{1}{2}.
\end{split}
\end{equation*}
Letting $j\rightarrow \infty$, we   have the desired inequality \eqref{ha} for $u\in H^1(\mathbb C)$ with support away from $0$.
\end{proof}
\medskip

By employing an induction process along with a similar  argument as in the proof to Proposition \ref{hh3}, one  can further get the following higher order edition.

\begin{cor}\label{hh4}
Let $k, l\in \mathbb Z^+$ with $k\le l$, and  $\lambda \in \mathbb Z+\left\{\frac{1}{2}\right\} $. For any $u: \mathbb C\rightarrow \mathbb C^N$ with $u\in H_{loc}^{l}(\mathbb C)$  and supported outside a neighborhood of $0$, and any 2-tuples $\alpha =(\alpha_1, \alpha_2), \beta=(\beta_1, \beta_2)$ with $ |\alpha|=k, |\beta|=l $ and $\alpha_j\le \beta_j,  j=1, 2$, there exists a constant $C$ dependent only on $l$ such that
$$  \int_{\mathbb C}\frac{|\partial^{\alpha_1}\bar\partial^{\alpha_2}u(z)|^2 }{|z|^{2\lambda+2(l-k)}}dv_z  \le C\int_{\mathbb C}\frac{|\partial^{\beta_1}\bar\partial^{\beta_2}u(z)|^2}{|z|^{2\lambda}} dv_z.$$
\end{cor}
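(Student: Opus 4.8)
The plan is to obtain the estimate by iterating Proposition \ref{hh3}, peeling off one derivative at a time, via induction on the integer $l-k\in\{0,1,\dots,l\}$, carried out uniformly over all admissible pairs $\alpha,\beta$ and all $\lambda\in\mathbb Z+\{\frac12\}$ at once. The base case $l-k=0$ is immediate: the constraints $\alpha_j\le\beta_j$ together with $\alpha_1+\alpha_2=\beta_1+\beta_2$ force $\alpha=\beta$, so the inequality holds with $C=1$.

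For the inductive step I would assume $l-k\ge 1$, so that $\alpha\ne\beta$ and hence $\beta_1>\alpha_1$ or $\beta_2>\alpha_2$; since Proposition \ref{hh3} is symmetric in $\partial$ and $\bar\partial$, one may assume $\beta_1>\alpha_1$ and set $\alpha':=(\alpha_1+1,\alpha_2)$, $k':=k+1$. The key (purely bookkeeping) move is to apply inequality \eqref{ha} to the function $w:=\partial^{\alpha_1}\bar\partial^{\alpha_2}u$ with the shifted exponent $\lambda':=\lambda+(l-k)-1$: because $l-k-1\in\mathbb Z$ one still has $\lambda'\in\mathbb Z+\{\frac12\}$, so Proposition \ref{hh3} applies, and since $2\lambda'+2=2\lambda+2(l-k)$ while $2\lambda'=2\lambda+2(l-k')$ and $\partial w=\partial^{\alpha'_1}\bar\partial^{\alpha'_2}u$, it reads
\[
\int_{\mathbb C}\frac{|\partial^{\alpha_1}\bar\partial^{\alpha_2}u(z)|^2}{|z|^{2\lambda+2(l-k)}}\,dv_z\ \le\ 16\int_{\mathbb C}\frac{|\partial^{\alpha'_1}\bar\partial^{\alpha'_2}u(z)|^2}{|z|^{2\lambda+2(l-k')}}\,dv_z.
\]
Now $(\alpha',\beta)$ is admissible for the order pair $(k',l)$ and $l-k'=(l-k)-1$, so the induction hypothesis (applied with the same $\lambda$) bounds the right-hand side by $C'\int_{\mathbb C}|z|^{-2\lambda}|\partial^{\beta_1}\bar\partial^{\beta_2}u|^2\,dv_z$ for some $C'=C'(l)$, and the claim follows with $C=16C'$. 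Since the recursion terminates after at most $l$ steps, one may take $C=16^{\,l}$, which depends only on $l$.

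I do not expect a genuine obstacle here; the only point requiring care is verifying that every intermediate function $\partial^{\alpha_1}\bar\partial^{\alpha_2}u$ with $|\alpha|\le l-1$ meets the hypotheses of Proposition \ref{hh3} — it is still supported outside a fixed neighborhood of $0$ since differentiation does not enlarge supports, and it lies in $H^1$ because $u\in H^l_{loc}(\mathbb C)$ with $l-k\ge 1$ — and checking that the weight exponent remains in $\mathbb Z+\{\frac12\}$ at each application, which holds precisely because the shift $l-k-1$ is an integer. The mixed $\partial/\bar\partial$ situation is absorbed by the two symmetric inequalities in Proposition \ref{hh3}, so no separate argument is needed. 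Alternatively, one could bypass the iteration altogether and repeat the Parseval/Fourier computation in the proof of Proposition \ref{hh3} directly, now retaining $l-k$ radial-type factors $(\partial_t+k)$ and invoking Lemma \ref{gg} that many times, which yields a bound of the same form.
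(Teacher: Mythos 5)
Your iteration of Proposition \ref{hh3}, peeling off one derivative at a time with the shifted weight $\lambda'=\lambda+(l-k)-1$, is exactly the induction the paper intends (the paper gives no details beyond ``induction plus the argument of Proposition \ref{hh3}''), and the bookkeeping is right: $\lambda'$ stays in $\mathbb Z+\left\{\frac{1}{2}\right\}$, the two symmetric inequalities of Proposition \ref{hh3} cover the mixed $\partial/\bar\partial$ cases, and the constant $16^{\,l-k}\le 16^{\,l}$ depends only on $l$. The one point to phrase more carefully is the verification that each intermediate function $\partial^{\alpha_1}\bar\partial^{\alpha_2}u$ satisfies the hypotheses of Proposition \ref{hh3}: that proposition requires membership in $H^1(\mathbb C)$ globally, which $u\in H^{l}_{loc}(\mathbb C)$ alone does not provide, so one should read the regularity hypothesis as in Proposition \ref{hh3} (global Sobolev regularity, as holds for the compactly supported functions to which such Carleman estimates are applied), under which your argument is complete.
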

\medskip

\begin{proof}[Proof of Theorem \ref{mm1}:]
Let $r>0$ be small  such that $D_{2r}\subset\subset \Omega$. Choose $\eta\in C_c^\infty(\mathbb C)$ with  $\eta =1$ on $D_r$,  $0\le \eta\le 1$ and $|\nabla \eta|\le \frac{2}{r}$ on $D_{2r}\setminus  D_r$, and $\eta =0$  outside $D_{2r}$. Let $\psi\in  C^\infty(\mathbb C)$ be  such that $\psi =0$ in $D_1$, $  0\le \psi\le 1$ and $|\nabla \psi|\le 2$ on $D_{2}\setminus {D_1}$, and $\psi =1$ outside $D_2$.  For each $k\ge \frac{4}{r}$ (thus $ \frac{2}{k}\le\frac{r}{2}$),  let $\psi_k  = \psi(k\cdot)$ and $u_k = \psi_k\eta u$. Then $u_k\in H^1(\mathbb C )$ with support outside $D_\frac{1}{k}$. 

Since $C<\frac{1}{4}$, one can choose  $\epsilon_0>0$ with \begin{equation}\label{sc}
    16(1+2\epsilon_0)C^2<1.
\end{equation} 
Making use of the following elementary inequality  \begin{equation*} 
     (a+b+c)^2\le (1+2\epsilon)a^2+ (2+\epsilon^{-1}) b^2+ (2+\epsilon^{-1})c^2,\ \ \text{for all}\ \ a, b, c\in \mathbb R, \epsilon>0, 
 \end{equation*} together with Proposition \ref{hh3}   and the inequality \eqref{z-1}, we have for each $\lambda \in \mathbb Z+\left\{\frac{1}{2} \right\}$,  
\begin{equation*}
    \begin{split}
     \int_{D_{2r}}\frac{|u_k(z)|^2}{|z|^{2\lambda}}dv_z  \le& 16\int_{D_{2r}}\frac{|\bar\partial u_k(z)|^2}{|z|^{2\lambda-2}}dv_z\\
       \le & 16(1+2\epsilon_0)\int_{D_{2r}}\frac{|\psi_k(z) \eta(z) |^2 | \bar\partial u(z)|^2}{|z|^{2\lambda-2}}dv_z  + 16(2+\epsilon_0^{-1})\int_{D_r}\frac{|\bar\partial \psi_k(z)|^2 |  u(z)|^2}{|z|^{2\lambda-2}}dv_z\\  &+   16(2+\epsilon_0^{-1}) \int_{D_{2r}\setminus D_r}\frac{|\bar\partial \eta(z)  |^2 |  u(z)|^2}{|z|^{2\lambda-2}}dv_z \\
        \le &  16(1+2\epsilon_0)C^2\int_{D_{2r}}\frac{ |u_k(z)|^2}{|z|^{2\lambda}}dv_z  +  16(2+\epsilon_0^{-1})\int_{D_r}\frac{|\bar\partial \psi_k(z)|^2 |  u(z)|^2}{|z|^{2\lambda-2}}dv_z \\
        &+  16(2+\epsilon_0^{-1})\int_{D_{2r}\setminus D_r}\frac{|\bar\partial \eta(z)  |^2 |  u(z)|^2}{|z|^{2\lambda-2}}dv_z.
    \end{split}
\end{equation*}
 Noting that \eqref{sc} holds,   one can subtract $16(1+2\epsilon_0)C^2\int_{D_{2r}}\frac{ |u_k(z)|^2}{|z|^{2\lambda}}dv_z$ from both sides and get
\begin{equation}\label{hp}
    \int_{D_{2r}}\frac{|u_k(z)|^2}{|z|^{2\lambda}}dv_z  \le C_0\left( \int_{D_r}\frac{|\nabla \psi_k(z)|^2 |  u(z)|^2}{|z|^{2\lambda-2}}dv_z+\int_{D_{2r}\setminus D_r}\frac{|\nabla \eta(z)  |^2 |  u(z)|^2}{|z|^{2\lambda-2}}dv_z\right), 
\end{equation} 
where $$C_0: = \frac{16(2+\epsilon_0^{-1})}{1-16(1+2\epsilon_0)C^2}>0.$$

Next, we show that \begin{equation}\label{fl}
    \lim_{k\rightarrow \infty}  \int_{D_r}\frac{|\nabla \psi_k(z)|^2 |  u(z)|^2}{|z|^{2\lambda-2}}dv_z  = 0.
\end{equation} Indeed, since $\nabla \psi_k$ is only supported on $D_{\frac{2}{k}}\setminus D_{\frac{1}{k}}$,
$$ \int_{D_r}\frac{|\nabla \psi_k(z)|^2 |  u(z)|^2}{|z|^{2\lambda-2}}dv_z \le \int_{\frac{1}{k}<|z|<\frac{2}{k}} \frac{|\nabla \psi_k(z)|^2|u(z)|^2}{|z|^{2\lambda-2} } dv_{z}\le k^{2\lambda}   \int_{|z|<\frac{2}{k}}|u(z)|^2dv_z\rightarrow 0$$
 as $k \rightarrow \infty$, as a consequence of the flatness of $u$  at $0$ in the $L^2$ sense. 

Letting $k\rightarrow \infty$ in \eqref{hp}, and making use of \eqref{fl} and Fatou's Lemma, we obtain  that 
\begin{equation*}
    \int_{D_{2r}}\frac{| \eta(z)u(z)|^2}{|z|^{2\lambda}}dv_z  \le C_0\int_{D_{2r}\setminus D_r}\frac{|\nabla \eta(z)  |^2 |  u(z)|^2}{|z|^{2\lambda-2}}dv_z. 
\end{equation*}
  Since
  $$  \int_{D_{2r}}\frac{|\eta(z)u(z)|^2}{|z|^{2\lambda}}dv_z \ge  \int_{D_{\frac{r}{2}}}\frac{|u(z)|^2}{|z|^{2\lambda}}dv_z \ge\left(\frac{2}{r}\right)^{2\lambda}\int_{D_{\frac{r}{2}}}| u(z)|^2dv_z $$
and $$\int_{D_{2r}\setminus D_r}\frac{|\nabla \eta(z)  |^2 |  u(z)|^2}{|z|^{2\lambda-2}}dv_z \le \frac{1}{r^{2\lambda-2}}\int_{D_{2r}\setminus D_r}|\nabla \eta (z) |^2 |  u(z)|^2dv_z,$$
we have
$$\int_{D_{\frac{r}{2}}}| u(z)|^2dv_z \le   \frac{C_0r^2}{ 2^{2\lambda}}\int_{D_{2r}\setminus D_r} |\nabla \eta(z)  |^2 |  u(z)|^2dv_z. $$
Letting $\lambda\rightarrow \infty$, we see $u= 0$ on $D_\frac{r}{2}$. Finally, 
apply the   unique continuation property  Theorem \ref{pz}  part 1) to get $u\equiv 0$.
\end{proof}
\medskip

\begin{proof}[Proof of Theorem \ref{main4}:] Let $r$ be small such that $B_{r}\subset \subset \Omega$.  For each fixed $\zeta\in S^{2n-1}$, let   $v(w): = u(w\zeta), w\in D_{r}$. Then  $v$ vanishes to infinite order  in the $L^2$ sense at $0$ and   satisfies
$$|\bar\partial v(w) |= |\zeta \cdot \bar\partial u(w\zeta)|\le \frac{C}{|w|} |u(w\zeta)| =  \frac{C}{|w|} |v(w)|,\ \ w\in D_{r}. $$
 For a.e. $\zeta\in S^{2n-1}$,  we apply   Theorem \ref{33} when  $N=1$, or Theorem \ref{mm1} when $N\ge 2$ and $C< \frac{1}{4}$,  to get $v = 0$ on $D_{r}$.   Hence  $u=0$ on $B_{r}$ in either case. The weak unique continuation property    further applies to give $u\equiv 0$.    
\end{proof}

 \section{Proof of Theorem \ref{main6}}\label{se6}
In this section, we prove Theorem \ref{main6} -- the unique continuation property for $ |\bar\partial u| \le V|u|$   on $\Omega\subset \mathbb C^2$, with the target dimension $N\ge 1$, and  $V\in L^4_{loc}$.  As already seen in Section \ref{s4}, its proof   can be reduced to that of the following theorem on the complex plane.   

\begin{theorem}\label{main7}
      Let $\Omega$ be a  domain in $\mathbb C$ and $0\in \Omega$. Suppose   $u: \Omega\rightarrow \mathbb C^N$  with $u\in H^1_{loc}(\Omega)$, and    satisfies \begin{equation}\label{qq}
          |\bar\partial u| \le {|z|^{-\frac{1}{2}}} V |u| \ \ \text{a.e. on}\ \ \Omega 
      \end{equation}    for some    $V \in L_{loc}^{4}(\Omega)$.  If $u$ vanishes to infinite order in the $L^2$ sense at $0$, then $u$ vanishes identically.
\end{theorem}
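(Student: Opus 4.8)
The strategy is to localize near $0$ and run a Carleman argument coupling the Carleman inequality of Proposition~\ref{hh3} with the weighted Cauchy-transform estimate of \cite{PZ}, in the spirit of the proof of Theorem~\ref{mm1}; the new feature is that $V\in L^4$ (rather than a bounded potential) forces the absorption step to be routed through a weighted $L^4$ norm of $u$, which is where the Cauchy estimate enters. Since $u\in H^1_{loc}(\Omega)$, Sobolev embedding in $\mathbb R^2$ gives $u\in L^q_{loc}(\Omega)$ for every $q<\infty$; in particular every weighted norm below is finite on sets bounded away from $0$. Away from $0$ the potential satisfies $|z|^{-1/2}V\in L^2_{loc}(\Omega\setminus\{0\})$, so by Theorem~\ref{pz} part 1) it suffices to show $u\equiv 0$ on a neighborhood of $0$. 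Fix $r>0$ with $D_{2r}\subset\subset\Omega$ and small enough that $C_\ast\|V\|_{L^4(D_{2r})}\le\tfrac12$, where $C_\ast$ is the constant in the Cauchy estimate below; choose $\eta\in C_c^\infty(D_{2r})$ with $\eta\equiv1$ on $D_r$, and $\psi_k$ with $\psi_k\equiv0$ on $D_{1/k}$, $\psi_k\equiv1$ off $D_{2/k}$, $|\nabla\psi_k|\le Ck$, and set $u_k:=\psi_k\eta u\in H^1(\mathbb C)$, supported in the annulus $\{1/k<|z|<2r\}$.

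For $\lambda\in\mathbb Z+\{\tfrac12\}$, Proposition~\ref{hh3} applied to $u_k$ gives $\||z|^{-\lambda-1}u_k\|_{L^2}\le 4\||z|^{-\lambda}\bar\partial u_k\|_{L^2}$. Writing $\bar\partial u_k=\psi_k\eta\,\bar\partial u+\bar\partial(\psi_k\eta)u$, using $|\psi_k\eta\,\bar\partial u|\le|z|^{-1/2}V|u_k|$ a.e., and applying H\"older's inequality with $V\in L^4$,
\[
\||z|^{-\lambda}\bar\partial u_k\|_{L^2}\le\|V\|_{L^4(D_{2r})}\,\big\||z|^{-\lambda-\frac12}u_k\big\|_{L^4}+E_k,\qquad E_k:=\big\||z|^{-\lambda}\bar\partial(\psi_k\eta)\,u\big\|_{L^2}.
\]
Because $u_k$ has compact support away from $0$, $u_k=T(\bar\partial u_k)$ for the Cauchy transform $T$, and the weighted estimate for $T$ from \cite{PZ} — whose hypotheses hold precisely because $\operatorname{supp}u_k$ is bounded away from $0$, which is also what makes its constant $C_\ast$ independent of $\lambda$ — yields $\||z|^{-\lambda-\frac12}u_k\|_{L^4}\le C_\ast\||z|^{-\lambda}\bar\partial u_k\|_{L^2}$. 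Since $\||z|^{-\lambda}\bar\partial u_k\|_{L^2}$ is a priori finite and $C_\ast\|V\|_{L^4(D_{2r})}\le\tfrac12$, it may be absorbed, leaving $\||z|^{-\lambda}\bar\partial u_k\|_{L^2}\le 2E_k$ and hence $\||z|^{-\lambda-1}u_k\|_{L^2}\le 8E_k$.

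Next I would dispose of $E_k$ exactly as in the proof of Theorem~\ref{mm1}. Splitting $\bar\partial(\psi_k\eta)=\eta\,\bar\partial\psi_k+\psi_k\,\bar\partial\eta$: the first term is supported on $D_{2/k}\setminus D_{1/k}$, where $|\nabla\psi_k|\le Ck$ and $|z|^{-\lambda}\le k^{\lambda}$, so its contribution is $\le Ck^{\lambda+1}\big(\int_{|z|<2/k}|u|^2\,dv\big)^{1/2}\to 0$ as $k\to\infty$ by the $L^2$-flatness of $u$ at $0$; the second term is supported on $D_{2r}\setminus D_r$, where $|z|^{-\lambda}\le r^{-\lambda}$, giving the $k$-independent bound $\le r^{-\lambda}M$ with $M:=\|\bar\partial\eta\|_{L^\infty}\|u\|_{L^2(D_{2r})}$. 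Letting $k\to\infty$ (Fatou on the left, $u_k\to\eta u$) gives $\||z|^{-\lambda-1}\eta u\|_{L^2}\le 8r^{-\lambda}M$; since $\eta\equiv1$ and $|z|^{-\lambda-1}\ge(r/2)^{-\lambda-1}$ on $D_{r/2}$, this forces $\int_{D_{r/2}}|u|^2\,dv\le C M^2 r^{2}\,4^{-\lambda}$. Letting $\lambda\to\infty$ through $\mathbb Z+\{\tfrac12\}$ yields $u\equiv0$ on $D_{r/2}$, and Theorem~\ref{pz} part 1) applied on $\Omega\setminus\{0\}$ then gives $u\equiv0$ on $\Omega$.

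The one genuinely substantial ingredient is the weighted $L^2\!\to\!L^4$ bound $\||z|^{-\lambda-\frac12}Tg\|_{L^4}\le C_\ast\||z|^{-\lambda}g\|_{L^2}$ for $g$ compactly supported away from $0$, with $C_\ast$ uniform in the large parameter $\lambda$: its exponents are dictated by the problem ($L^4$ because $V\in L^4$ pairs with $L^2$, the half-power shift matching $|z|^{-1/2}$ in the potential, and scale invariance forcing the rest), but on all of $\mathbb R^2$ the Stein--Weiss inequality only survives for $\lambda$ in a bounded range, so one must genuinely use that $g$ is supported off $0$ — in practice by passing to logarithmic coordinates, as in the proof of Proposition~\ref{hh3}. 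This is exactly the estimate supplied by \cite{PZ}, and checking that its normalization dovetails with the H\"older splitting and the absorption step, together with the routine but essential bookkeeping that every weighted norm above is a priori finite (the reason the cutoff $\psi_k$ is inserted), is the main work.
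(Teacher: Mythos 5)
Your overall architecture (the cutoffs $\psi_k\eta$, the Carleman inequality of Proposition \ref{hh3}, absorption using smallness of $\|V\|_{L^4(D_{2r})}$, the flatness of $u$ killing the inner cutoff error, letting $\lambda\to\infty$, then the weak unique continuation of Theorem \ref{pz}) is the same skeleton the paper uses. The gap is in the one ingredient you yourself flag as substantial: the bound $\||z|^{-\lambda-\frac12}Tg\|_{L^4}\le C_\ast\||z|^{-\lambda}g\|_{L^2}$, uniform in $\lambda\in\mathbb Z+\{\tfrac12\}$, for $g$ supported away from $0$. First, this is \emph{not} the estimate supplied by \cite{PZ}: what \cite{PZ} provides, and what the paper actually uses, is \eqref{whl}, a weighted $L^2\to L^2$ bound $\|I_1f\|_{L^2_{\tilde V}}\le C_0\|\tilde V\|_{L^2}\|f\|_{L^2_{\tilde V^{-1}}}$ in which the weight is the potential itself (applied with $\tilde V=V^2\in L^2$, after replacing $V$ by $V\chi_{D_r}+\tfrac{r}{1+|z|^2}$ to make it positive and bounded below, and after applying the Cauchy formula to $u_k/z^m$ rather than to $u_k$ with a power weight). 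No power-weighted $L^2\to L^4$ estimate appears there. Second, as you state it the inequality is false: for a general $g$ compactly supported off the origin, $Tg$ is holomorphic near $0$ but generically $Tg(0)\neq 0$, so $\||z|^{-\lambda-\frac12}Tg\|_{L^4}=\infty$ for every $\lambda\ge 0$, while the right-hand side is finite. The estimate only makes sense when restated for a function $w$ (here $w=u_k$) that is itself compactly supported away from $0$, in the Carleman form $\||z|^{-\lambda-\frac12}w\|_{L^4}\le C\||z|^{-\lambda}\bar\partial w\|_{L^2}$.

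That restated inequality is in fact provable, and with it your argument closes: in logarithmic coordinates it becomes $\|g\|_{L^4(\mathbb R\times S^1)}\lesssim\|(\partial_t+\lambda+i\partial_\theta)g\|_{L^2}$ for compactly supported $g$, and the modulus-preserving twist $g\mapsto e^{-i(\lambda-\frac12)\theta}g$ reduces all half-integer $\lambda$ to the single operator $\partial_t+\tfrac12+i\partial_\theta$, whose symbol $\tfrac12-k+i\tau$ is bounded below by a multiple of $1+|k|+|\tau|$, so its inverse maps $L^2$ into $H^1(\mathbb R\times S^1)\hookrightarrow L^4$ with a $\lambda$-independent constant. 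But this is a genuinely new lemma — an $L^2\to L^4$ Carleman--Sobolev inequality for $\bar\partial$ that is neither Proposition \ref{hh3} nor anything cited from \cite{PZ} — and you neither state it in a correct form nor prove it; pointing to a nonexistent source is where the proposal breaks. By contrast, the paper's route needs no such estimate: it pays instead with the positivity/lower-bound modification of $V$ (needed both to use $V^2$ as a weight in \eqref{whl} and to conclude at the last step) and with the smallness condition $\|V\|^4_{L^4}\le \pi^2/(32C_0^2)$. If you supply a proof of the restated $L^2\to L^4$ Carleman inequality along the lines above, your version would be a legitimate alternative proof; as written, the central step is unsupported.
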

 \medskip

Note that the $N =1$ case in  Theorem \ref{main7} is a special case that has been proved in Theorem \ref{333}. On the other hand, since ${|z|^{-\frac{1}{2}}} V\notin L^2_{loc}(\Omega) $ given a general $V \in L_{loc}^{4}(\Omega)$,  Theorem \ref{pz} does not apply.  

The key element in proving Theorem \ref{main7} involves an idea in \cite{PZ} that utilizes the Cauchy integral, coupled with the technique employed in establishing Theorem \ref{mm1}.  To begin with, let us recall a representation formula for  $u\in H^1(\mathbb C)$ with compact support in terms of the Cauchy kernel:  
\begin{equation}\label{cg}
    u(z) = \frac{1}{\pi}\int_{\mathbb C}\frac{\bar\partial u(\zeta)}{z -\zeta }dv_{\zeta}, \ \ \text{a.e.}\ z\in \mathbb C.
\end{equation}
See, for instance, \cite[Lemma 3.1]{PZ}. Denote by $\|f\|_{L^2_V(\Omega)}$ the weighted $L^2(\Omega)$ norm of a function $f$ on $\Omega\subset \mathbb C$ with respect to a weight $V>0$, with
 $$ \|f\|_{L^2_V(\Omega)}: =\left(\int_{\Omega} |f(z)|^2V(z)dv_z\right)^\frac12.  $$
It was  proved in \cite[Theorem 2.2]{PZ}  that,  given a positive function $V\in L^2(\mathbb C)$,  the Riesz potential  
$$I_{1}f= \int_{\mathbb C} \frac{f(\zeta)}{|\zeta -\cdot |}dv_\zeta $$ is a bounded operator   from $L^2_{V^{-1}} (\mathbb C)$   to $L^2_{V}(\mathbb C)$. More precisely,  there exists a universal constant $C_0$ such that for any $f\in L_{V^{-1}}^2(\mathbb C)$,  \begin{equation}\label{whl}
    \|I_1f\|_{L_V^2(\mathbb C)}\le C_0 \|V\|_{L^2(\mathbb C)} \|f\|_{L_{V^{-1}}^2(\mathbb C)}.
\end{equation}
 \medskip

\begin{proof}[Proof of Theorem \ref{main7}: ]
    Fix an $r>0$  small  such that $D_{2r}\subset \subset\Omega$,   and 
\begin{equation*}
    \left\|V\chi_{D_{r}}+\frac{r}{1+|z|^2}\right\|^4_{L^4(\mathbb C)}\le \frac{\pi^2}{32C^2_0}, 
\end{equation*} where $C_0$ is the universal constant in \eqref{whl}, and $\chi_{D_{r}}$ is the characteristic function for  ${D_{r}}$.  Replacing $V $ by $V \chi_{D_r}+\frac{r}{1+|z|^2}$, we have  $\eqref{qq}$  holds on $D_{2r}$ with  $V \in L^4(\mathbb C)$,  
    \begin{equation}\label{vb}
    V>0 \ \ \text{on}\ \ \mathbb C;   \ \  V\ge C_r \ \  \text{on}\ \ D_{2r}
    \end{equation} 
    for  some  $C_r>0$ dependent only on $r$, and
    \begin{equation}\label{vs}
    \|V\|^4_{L^4(\mathbb C)}\le \frac{\pi^2}{32C^2_0}.
    \end{equation}
    We shall show that $u = 0$ on $D_\frac{r}{2}$.  
    
Let $\eta$ and $\psi_k$ be as defined in the proof of Theorem \ref{mm1}. 
Then  $u_k: = \psi_k \eta u \in H^1(\mathbb C )$ and is supported inside $D_{2r}\setminus D_{\frac{1}{k}}$. So is $\frac{u_k}{z^m} $  for each $m\in \mathbb Z^+$.  
 Applying \eqref{cg} to $\frac{u_k}{z^m} $,  we obtain     
\begin{equation*}
    \begin{split}
         \frac{|u_k(z)|^2}{|z|^{2m}} =&\frac{1}{\pi^2} \left |\int_{\mathbb C}\frac{\bar\partial  u_k(\zeta) }{(z -\zeta)\zeta^m }dv_\zeta \right |^2, \ \ z\in D_{2r},
    \end{split}
\end{equation*}
and   with $\tilde V: = V^2$, one has
\begin{equation*}
    \begin{split}
         \int_{D_{2r}}\frac{  |u_k(z)|^2}{|z|^{2m}}\tilde V(z)dv_z \le &\frac{1}{\pi^2} \int_{D_{2r}}\left (\int_{\mathbb C} \frac{1}{|z-\zeta|} \frac{|\bar\partial   u_k(\zeta) |}{|\zeta|^m}dv_{\zeta} \right )^2\tilde V(z)dv_z 
        \le   \frac{1}{\pi^2}\left\|I_1\left(\frac{|\bar\partial   u_k |}{|\cdot|^{m}} \right)\right\|^2_{L^2_{\tilde V}(\mathbb C)}.
    \end{split}
\end{equation*}
Make use of   \eqref{whl} with respect to the weight $\tilde V$  to further infer
\begin{equation}\label{mm}
    \begin{split}
       & \int_{D_{2r}}\frac{  |u_k(z)|^2}{|z|^{2m}}\tilde V(z)dv_z\\
               \le &\frac{C^2_0}{\pi^2}\|\tilde V\|^{2 }_{L^{2}(\mathbb C)}\int_{\mathbb C} \frac{|\bar\partial \left(\psi_k(z)\eta(z)u(z)\right)|^2}{|z|^{2m}\tilde V(z)}  dv_{z}  \\
        \le &\frac{C^2_0}{\pi^2}\|V\|^{4 }_{L^{4}(\mathbb C)}\left(\int_{D_{2r}} \frac{|\bar\partial \psi_k(z)|^2|u(z)|^2}{|z|^{2m}\tilde V(z) } dv_{z} + \int_{D_r} \frac{|\psi_k(z)|^2|\bar\partial u(z) |^2}{|z|^{2m}\tilde V(z) } dv_{z} + \int_{D_{2r}\setminus D_r} \frac{|\bar\partial \left(\eta(z)u(z)\right) |^2}{|z|^{2m} \tilde V(z)} dv_{z}\right)\\
        =&: A+B+C.
    \end{split}
\end{equation}

Note that for $B$, by  the inequalities \eqref{qq}    and \eqref{vs}
\begin{equation*}
   B\le \frac{C^2_0}{\pi^2}\|V\|^{4 }_{L^{4}(\mathbb C)}\int_{D_r} \frac{|\psi_k(z)|^2| u(z) |^2}{|z|^{2m+1}}  dv_{z} \le  \frac{1}{32}\int_{D_{2r}} \frac{| u_k(z) |^2}{|z|^{2m+1}} dv_{z}.
\end{equation*}
 Thus we apply Theorem \ref{hh3} with $\lambda = m-\frac{1}{2}$ to have
\begin{equation*}
\begin{split}
      B\le  &\frac{1}{2}\int_{D_{2r}} \frac{| \bar\partial u_k(z) |^2}{|z|^{2m-1}} dv_{z}\\
                \le & \frac{1}{2}\int_{D_{2r}}\frac{|\psi_k(z) \eta(z) |^2 | \bar\partial u(z)|^2}{|z|^{2m-1}}dv_z  + \frac{1}{2}\int_{D_r}\frac{|\bar\partial \psi_k(z)|^2 |  u(z)|^2}{|z|^{2m-1}}dv_z +\frac{1}{2} \int_{D_{2r}\setminus D_r}\frac{|\bar\partial \eta(z)  |^2 |  u(z)|^2}{|z|^{2m-1}}dv_z \\
        \le &  \frac{1}{2}\int_{D_{2r}}\frac{ |u_k(z)|^2}{|z|^{2m}}\tilde V(z)dv_z  +  \frac{1}{2}\int_{D_r}\frac{|\bar\partial \psi_k(z)|^2 |  u(z)|^2}{|z|^{2m-1}}dv_z 
         +  \frac{1}{2}\int_{D_{2r}\setminus D_r}\frac{|\bar\partial \eta(z) |^2 |  u(z)|^2}{|z|^{2m-1}}dv_z\\
          =&: I_1+I_2+I_3.
\end{split}
\end{equation*}
Here we used \eqref{qq} in the third inequality. Combining \eqref{mm} with the above, 
\begin{equation*}
     \int_{D_{2r}}\frac{  |u_k(z)|^2}{|z|^{2m}}\tilde V(z)dv_z\le A +C + I_1+ I_2 +I_3.  
\end{equation*}
One  further subtracts $I_1$ from both sides to get
\begin{equation}\label{ee}
    \int_{D_{2r}}\frac{  |u_k(z)|^2}{|z|^{2m}}\tilde V(z)dv_z\le 2A +2C + 2I_2 +2I_3.   
\end{equation}
Similarly as in the proof to \eqref{fl} along with the fact that      $\tilde V\ge C^2_r$ on $ D_{2r}$,  one has $$\lim_{k\rightarrow \infty} A =\lim_{k\rightarrow \infty} I_2= 0.$$
Together, after passing $k\rightarrow \infty$ and using Fatou's Lemma in \eqref{ee}, we obtain
\begin{equation}\label{bb}
    \begin{split}
     \int_{D_{2r}}\frac{ |\eta(z)|^2 |u(z)|^2}{|z|^{2m}}\tilde V(z)dv_z\le  \int_{D_{2r}\setminus D_r} \frac{|\bar\partial \left(\eta(z)u(z)\right) |^2}{|z|^{2m} \tilde V(z)} dv_{z}  +   \int_{D_{2r}\setminus D_r}\frac{|\bar\partial \eta(z)  |^2 |  u(z)|^2}{|z|^{2m-1}}dv_z.
    \end{split}
\end{equation}

Now  multiply two sides of \eqref{bb} by $r^{2m}$. On the left hand side,     $$  \int_{D_{2r}}\frac{ r^{2m}  |\eta(z)|^2|u(z)|^2}{|z|^{2m}}\tilde V(z)dv_z\ge  \int_{D_{\frac{r}{2}}}\frac{r^{2m} }{|z|^{2m}}|u(z)|^2\tilde V(z)dv_z \ge  2^{2m}\int_{D_\frac{r}{2}}|u(z)|^2\tilde V(z)dv_z.$$
On the right hand side, using the fact that       $\tilde V\ge C^2_r$ on $ D_{2r}$ again,  
 \begin{equation*} 
 \begin{split}
     &\int_{D_{2r}\setminus D_r} \frac{r^{2m}|\bar\partial \left(\eta(z)u(z)\right) |^2}{|z|^{2m}\tilde V(z)} dv_{z}  +   \int_{D_{2r}\setminus D_r}\frac{r^{2m}|\bar\partial \eta(z)  |^2 |  u(z)|^2}{|z|^{2m-1}}dv_z\\
      \le &\frac{1}{C_r^2}\int_{D_{2r}\setminus D_r}  |\bar\partial \left(\eta(z)u(z)\right) |^2  dv_{z} +  r  \int_{D_{2r}\setminus D_r}|\nabla \eta(z)  |^2 |  u(z)|^2dv_z\le \tilde C_r\|u\|^2_{H^1(D_{2r})}, 
 \end{split}
    \end{equation*}
    for some   $\tilde C_r$ dependent only on $r$. Thus 
 
 $$  2^{2m}\int_{D_\frac{r}{2}}|u(z)|^2\tilde V(z)dv_z\le   \tilde C_r\|u\|^2_{H^1(D_{2r})}. $$
Letting $m\rightarrow \infty$ and making use of the positivity of $\tilde V$ on $D_{\frac{r}{2}}$, we have $u= 0$  on $D_{\frac{r}{2}}$. The  proof  is  thus complete as a consequence of the weak unique continuation property.  
\end{proof}
\medskip

\begin{proof}[Proof of Theorem \ref{main6}: ] As in the proof to Theorem \ref{main3} yet with $n=2$ and $p=4$, let $z_0=0$ and $r>0$ be small such that  $V\in L^{4}(B_{r})$.  For each fixed $\zeta\in S^{3}$, let $\tilde V(w): = |w|^{\frac{1}{2}}V(w\zeta)$ and $v(w): = u(w\zeta), w\in D_{r}$. Then  $v$ vanishes to infinite order at $0$ in the $L^2$ sense. Moreover, $v$ satisfies
$$|\bar\partial v(w) | \le   |w|^{-\frac{1}{2}}\tilde V(w)|v(w)|,\ \ w\in D_{r}. $$
Note that  for a.e. $\zeta\in S^{3}$, $\tilde V\in L^{4}(D_{r})$ by Lemma \ref{cp}. According to Theorem \ref{main7},    $v = 0$ on $D_{r}$ for a.e. $\zeta\in S^{3}$. Hence  $u=0$ on $B_{r}$. Apply the weak unique continuation property to get $u\equiv 0$.
 \end{proof}
\medskip

\begin{remark}\label{re}
    The following two questions still remain  open. In particular,  with an approach similar as in the proof to Theorem \ref{main6}, the resolution of Question 1  can be converted to that of Question 2. 
    \medskip
    
 \noindent\textbf{1.  } 
     Let $\Omega$ be a  domain in $\mathbb C^n, n\ge 3$ and $N\ge 2$. Suppose   $u: \Omega\rightarrow \mathbb C^N$ is smooth on $\Omega$ and   satisfies $ |\bar\partial u| \le V|u|$ a.e. on $\Omega$ for some    $V \in L_{loc}^{2n}(\Omega)$. If $u$ vanishes to infinite order at some $z_0\in \Omega$, does $u$ vanish  identically?  
        \medskip
        
     \noindent\textbf{2. }
     Let $\Omega$ be a  domain in $\mathbb C$ containing $0$, and  $n, N\in \mathbb Z^+$  with $n\ge 3, N \ge 2$. Suppose   $u: \Omega\rightarrow \mathbb C^N$ is smooth on $\Omega$ and    satisfies $ |\bar\partial u| \le {|z|^{-\frac{n-1}{n}}} V |u|$ a.e. on $\Omega$ for some    $V \in L_{loc}^{2n}(\Omega)$.  If $u$ vanishes to infinite order at $0\in \Omega$, does  $u$ vanish  identically?

\end{remark}

\bibliographystyle{alphaspecial}

\fontsize{11}{11}\selectfont

\vspace{0.7cm}
\noindent pan@pfw.edu,

\vspace{0.2 cm}

\noindent Department of Mathematical Sciences, Purdue University Fort Wayne, Fort Wayne, IN 46805-1499, USA.\\

\noindent zhangyu@pfw.edu,

\vspace{0.2 cm}

\noindent Department of Mathematical Sciences, Purdue University Fort Wayne, Fort Wayne, IN 46805-1499, USA.\\
\end{document}